\documentclass[12pt]{amsart}
\newcommand{\deleted}[1]{}
\newcommand{\delete}[1]{}
\newcommand{\mynotes}[1]{}
\newcommand\notes[1]{}
\usepackage{txfonts}
\usepackage{amssymb}
\usepackage{eucal}
\usepackage{graphicx}
\usepackage{amsmath}
\usepackage{amscd}
\usepackage[all]{xy}
\usepackage[dvipdfm,  
            pdfstartview=FitH,
            CJKbookmarks=true,
            bookmarksnumbered=true,
            bookmarksopen=true,
            colorlinks, 
            pdfborder=001,   
            linkcolor=blue,
            anchorcolor=blue,
            citecolor=blue
            ]{hyperref}

\usepackage{amsthm}
\usepackage{mathrsfs}
\usepackage{enumerate}
\usepackage{amsfonts,latexsym}
\usepackage{xspace}
\usepackage{epsfig}
\usepackage{float}
\usepackage{color}
\usepackage{fancybox}
\usepackage{colordvi}
\usepackage{multicol}
\usepackage{tikz}
\usepackage{wasysym}
\usepackage{xspace}
\usepackage[active]{srcltx} 
\usepackage{mathtools} 
\usepackage{tabularx}
\usepackage{stfloats}



\topmargin -.8cm \textheight 22.8cm \oddsidemargin 0cm \evensidemargin -0cm \textwidth 16.3cm

\newcommand\changed[1]{#1}
\changed{}

\newtheorem{theorem}{Theorem}[section]
\newtheorem{lemma}[theorem]{Lemma}

\newtheorem{coro}[theorem]{Corollary}

\newtheorem{prop}[theorem]{Proposition}
\theoremstyle{definition}
\newtheorem{defn}[theorem]{Definition}
\newtheorem{remark}[theorem]{Remark}
\newtheorem{exam}[theorem]{Example}

\deleted{\newtheorem{theorem}{Theorem}[section]
\newtheorem{prop-def}{Proposition-Definition}[section]
\newtheorem{coro-def}{Corollary-Definition}[section]

}


\newcommand{\nc}{\newcommand}
\renewcommand{\frak}{\mathfrak}
\newcommand{\efootnote}[1]{}

\nc{\mlabel}[1]{\label{#1}}  
\nc{\mcite}[1]{\cite{#1}}  
\nc{\mref}[1]{\ref{#1}}  
\nc{\mbibitem}[1]{\bibitem{#1}} 

\renewcommand\geq{\geqslant}
\renewcommand\leq{\leqslant}

\renewcommand\bar[1]{\overline{#1}}



\nc{\mrm}[1]{{\rm #1}}

\nc{\Hom}{\mrm{Hom}}

\nc{\Ext}{\mrm{Ext}}

\nc{\End}{\mrm{End}}

\nc{\rad}{\mrm{rad}}

\nc{\Aut}{\mrm{Aut}}

\nc{\X}{X^{\bullet}}

\nc{\Y}{Y^{\bullet}}

\nc{\dcp}{the double centraliser property }

\nc{\ddcp}{the derived double centraliser property}

\nc{\add}{\mrm{add\,}}

\nc{\RHom}{\mrm{{\bf R}Hom}}

\nc{\OT}{\otimes^{\bf L}}

\nc{\thick}{\mrm{thick}}

\nc{\cone}{\mrm{cone}}

\nc{\proj}{\mrm{proj\,}}

\nc{\gl}{\mrm{gl}}

 \nc{\im}{\mrm{im}}

 \nc{\id}{\mrm{id}}





\nc{\redt}[1]{\textcolor{red}{#1}}

\nc{\jing}[1]{\textcolor{red}{Jing:#1}} 

\nc{\xing}[1]{\textcolor{purple}{Xing:#1}}


\begin{document}
\title[Complexes with the derived double centraliser property]{Complexes with the derived double centraliser property}
\author[Jin Zhang]{Jin Zhang}
\address{School of Mathematics and Statistics,
Lanzhou University, Lanzhou, 730000, P. R. China}
\email{zj\_10@lzu.edu.cn}


\hyphenpenalty=8000

\begin{abstract}
In representation theory, the double centraliser property is an important property for a module (bimodule). It plays a fundamental role in many theories. In this paper, we extend this property to complexes in derived categories of finite dimensional algebras, under the name derived double centraliser property. Characterizations for complexes with the derived double centraliser property and (two-sided) tilting complexes in derived categories of hereditary algebras are given. In particular, all complexes with this property in the derived categories of lower triangular matrix algebras are classified.
\end{abstract}

\subjclass[2010]{16E05, 16E35, 16E60}

\keywords{Double centraliser property, complexes, derived categories, hereditary algebras}

\maketitle

\allowdisplaybreaks


\section{Introduction}

Let $A$ be an algebra and $M$ an $A$-module with endomorphism algebra $B$. Then naturally
$M$ is an $A$-$B$-bimodule and $M$ is said to have the double centralizer property if left multiplication by elements in $A$ is an isomorphism $A\cong \End_{B^{op}}(M)$.

Modules with the double centralizer property occur frequently in algebra. Simple modules over central
simple algebras do satisfy the property, and more generally a Morita bimodule
satisfies the double centralizer property. Schur-Weyl duality (cf. \cite{Green}) is another
prominent occurrence. More recently it was shown (cf. \cite[Section 4.1]{Ringel}) that a tilting module over a finite dimensional algebra is another case. Only very recently a systematic classification of modules with the
double centralizer property was started by Crawley-Boevey, Ma, Rognerud and Sauter \cite{CMRS}
in the special case of lower triangular matrix algebras, i.e., hereditary algebras of type $A$.

Rickard's Morita theory for derived categories gives another case. Rickard showed \cite{R} that
if $A$ and $B$ are derived equivalent as algebras over a commutative ring $R$, both suppose to be projective over $R$, then
there is a complex $\X$ in the bounded derived category $D^b(A\otimes_RB^{op})$ of $A$-$B$-bimodules
such that $\X\otimes_B^{\mathbf L}-:D^-(B)\rightarrow D^-(A)$ is an equivalence of triangulated categories.
Such a complex is called a two-sided tilting complex and Rickard showed that $\X$ also has the double
centralizer property in the sense that right multiplication by elements in $B$ gives an isomorphism
$B\cong \End_{D^-(A)}(\X)^{op}$ and left multiplication by elements of $A$ gives an isomorphism
$A\cong \End_{D^-(B^{op})}(\X)$. We call this property under the name derived double centralizer property. See Definition \mref{def} for a more precise statement. Moreover, we point out that the notation of derived double centraliser property is different from the property of a complex $_A\X_B$ that $\RHom_A(\X, \X)^{op}\cong B$ and $\RHom_{B^{op}}(\X, \X)\cong A$ in the sense of Keller \cite{K}, Keller showed that such a complex preserves the Hochschild cohomologies of the algebras, however, complexes with the derived double centraliser property in our sense don't. A typical example we refer to work \cite{FM} by Fang and Miyachi. Note that, if $B$ is hereditary, then two-sided tilting and double centralizer proerty in the sense of Keller coincide. We characterize two-sided tilting complexes in the same way as complexes with the derived double centralizer property in this situation in our main Theorem \mref{1.1}.

The purpose of this paper is to study the derived double centralizer property systematically.
Since this seems to be vast, we start with the case when $A$ and $B$ are
finite dimensional algebras and in addition $B$ is hereditary. Note that we need to start with
a complex of bimodules rather than with a complex in $D^-(A)$ since  the endomorphism algebra
in the derived category of $A$ of such a complex does not give an action on the complex itself. In some
cases, it is possible to lift this to an action on the complex, but Keller showed that this is a highly
non trivial task \cite{Keller}.

Our main results make use of left approximations. If $\mathcal C$ is a Krull-Schmidt category and $\mathcal D$ is a full subcategory of $\mathcal C$, then a left $\mathcal D$-approximation of an object $X$ of $\mathcal C$ is a morphism $f: X\rightarrow D$ such that $D$ is an object of $\mathcal D$
and such that $\Hom_{\mathcal C}(f,D')$ is an epimorphism for any object $D'$ in $\mathcal D$. The approximation $f$ is left minimal if $gf=f$ for an endomorphism $g$ implies that $g$ is an isomorphism.
See Section~\mref{approximation} for more ample details.

We denote by $r_{\X}$ the right multiplication map, roughly speaking, for the precise definition of $r_{\X}$, we refer to Definition \mref{def}.

\begin{theorem}(Theorems \mref{charact} and \mref{t})\mlabel{1.1}
Let $A$ and $B$ be algebras. Let $\X$ be a bounded complex of $A$-$B$-bimodules. Assume that $B$ is hereditary and $r_{\X}:B\rightarrow\End_{D^b(A)}(\X)^{op}$ is an isomorphism. Then $\X$ has the derived double centraliser property (resp. is two-sided tilting) if and only if for any indecomposable projective $A$-module $Ae$,
\begin{enumerate}
\item
there is a unique $i\in \mathbb{Z}$ such that $\Hom_{D^b(A)}(Ae[i], \X)\neq 0$;
\item
there exists a minimal left $\add \X$-approximation sequence
$$Ae[i]\rightarrow \X_0\stackrel{g}\rightarrow \X_1$$
of $Ae[i]$ such that $Ae\cong H^{-(i+1)}(\cone(g))$ as $A$-modules (resp. which forms a distinguished triangle in $D^b(A)$).
\end{enumerate}
\end{theorem}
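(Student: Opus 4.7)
The plan is to handle both directions of the ``iff'' in parallel, and to reduce condition (1) to a cohomological statement by observing that, since $Ae$ is projective,
$\Hom_{D^b(A)}(Ae[i],\X)\cong eH^{-i}(\X)$,
so (1) simply asserts that the ``$e$-column'' $e\X$ is cohomologically concentrated in a single degree. Throughout I would exploit the anti-equivalence $\add\X\simeq\proj B^{op}$ given by $\Hom_{D^b(A)}(-,\X)$, which comes from the hypothesis $r_{\X}:B\cong\End_{D^b(A)}(\X)^{op}$, together with the fact that hereditariness of $B$ forces every right $B$-module to have a projective resolution of length at most one.

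For the forward direction I would first use the idempotent decomposition. In the derived double centraliser case, $A\cong\End_{D^b(B^{op})}(\X)$ splits $\X$ in $D^b(B^{op})$ into indecomposable summands $e\X$ indexed by primitive idempotents $e$ of $A$. Since $B$ is hereditary, every indecomposable object of $D^b(B^{op})$ is of the form $N[j]$ for an indecomposable right $B$-module $N$, so $e\X\cong N_e[j_e]$; this immediately yields condition (1) with $i=j_e$ and identifies $\Hom_{D^b(A)}(Ae[j_e],\X)$ with the indecomposable right $B$-module $N_e$. I would then take a minimal projective presentation $Q_1\to Q_0\to N_e\to 0$ (with $Q_1$ projective by hereditariness) and transfer it via the anti-equivalence to a morphism $g:\X_0\to\X_1$ in $\add\X$; the surjection $Q_0\twoheadrightarrow N_e$ supplies a minimal left $\add\X$-approximation $f_0:Ae[i]\to\X_0$.

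The main effort is the cone computation. In the two-sided tilting situation, $\Ext^1_{D^b(A)}(\X,\X)=0$ forces the long exact sequence obtained by applying $\Hom_{D^b(A)}(-,\X)$ to the triangle on $f_0$ to collapse, yielding $\Hom(\cone(f_0),\X)\cong Q_1$; since $\cone(f_0)\in\thick(\X)$, the anti-equivalence identifies $\cone(f_0)\cong\X_1$, producing the triangle required by (2). In the derived double centraliser situation this collapse fails, and instead I would extract $H^{-(i+1)}(\cone(g))$ from the cohomology long exact sequence of $\X_0\to\X_1\to\cone(g)$ combined with the cohomological concentration of each $e'\X$; a direct comparison against the idempotent decomposition of $\X$ identifies this cohomology with $Ae$ as a left $A$-module. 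This is the step I expect to be the main obstacle: tracking how the connecting maps interact with the $A$-idempotents without the convenience of $\Ext^1$-vanishing.

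For the converse, assume (1) and (2). In the tilting case the distinguished triangle exhibits $Ae\in\thick(\X)$ for every primitive idempotent, so $A\in\thick(\X)$; combined with $r_\X$ being an isomorphism, this lets me invoke Rickard's criterion to conclude that $\X$ is two-sided tilting. For the derived double centraliser property I would build the left-multiplication map $A\to\End_{D^b(B^{op})}(\X)$ and check it idempotent by idempotent: condition (1) forces each $e\X$ to be cohomologically pure, and (2) together with the projective cover description forces $e\X$ to be indecomposable in $D^b(B^{op})$; matching dimensions then gives the isomorphism.
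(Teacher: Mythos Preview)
Your outline has the right skeleton (anti-equivalence $\add\X\simeq\proj B^{op}$, minimal projective presentations of length one), but it is missing the one computation that makes the whole argument go through, and your tilting converse has an actual gap.

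\textbf{The missing lemma.} What you call ``the main obstacle'' is resolved in the paper by a clean formula: for any $\Y$, the minimal left $\add\X$-approximation sequence $\Y\to\X_0\stackrel{g}\to\X_1$ satisfies
\[
\cone(g)\;\cong\;\RHom_{B^{op}}\bigl(\Hom_{D^b(A)}(\Y,\X),\,\X\bigr)[1]
\]
in $D^b(A)$. This comes from applying $\RHom_{B^{op}}(-,\X)$ to the short exact sequence $0\to Q_1\to Q_0\to\Hom(\Y,\X)\to 0$ and using that on projectives this functor is the inverse of the anti-equivalence. With $\Y=Ae[i]$ and $\Hom(Ae[i],\X)\cong e\X[-i]$ one gets $H^{-(i+1)}(\cone(g))\cong e\,H^0\RHom_{B^{op}}(\X,\X)$, which is $Ae$ exactly when $l_\X$ is an isomorphism. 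This handles both directions of the derived double centraliser case at once; your ``long exact sequence plus idempotent comparison'' plan would at best rediscover this, and your converse sketch (``force $e\X$ indecomposable, match dimensions'') does not actually produce the isomorphism $A\cong\End_{D^b(B^{op})}(\X)$ without it.

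\textbf{The tilting converse.} Your argument ``the triangle shows $Ae\in\thick(\X)$, so $A\in\thick(\X)$, now invoke Rickard'' does not work as stated. Rickard's criterion in $D^b(A)$ requires $\X\in K^b(\proj A)$ and self-orthogonality $\Hom_{D^b(A)}(\X,\X[n])=0$ for $n\neq 0$; the hypothesis $r_\X$ iso only controls degree zero, and nothing in (1)--(2) gives perfectness over $A$. The paper instead works on the $B^{op}$ side: the cone formula above (with $\Y$ running over all $Ae[i]$ and the triangle condition forcing $\cone(g)\cong Ae[i+1]$) yields $\RHom_{B^{op}}(\X,\X)\cong A$ in $D^b(A)$, hence self-orthogonality in $D^b(B^{op})$; a separate lemma then shows $\gl(A)<\infty$, so $\X\in K^b(\proj B^{op})$; finally one checks $B_B\in\thick(\X_B)$ and applies Proposition~\ref{two-sided}. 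Switching to the $B^{op}$ side is not cosmetic: it is where hereditariness of $B$ gives you control.
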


In \cite{MY}, Miyachi and Yekutieli studied the derived Picard group of a hereditary algebra $A$ by associating an automorphism of the Auslander-Reiten quiver of the derived category $D^b(A)$ to each two-sided tilting complex of $A$-bimodules. We give an intrinsic characterization of (two-sided) tilting complexes of hereditary algebras by approximation theory. Combining with complexes with the derived double centraliser property, the difference between them are reflected on the approximation sequences of the shifted of indecomposable projective modules.

\begin{prop}(Propositions \mref{dd} and \mref{two-tilting})
Let $A$ be a hereditary algebra and $\X$ a bounded complex of $A$-modules. Assume $\End_{D^b(A)}(\X)$ is hereditary.
Then $\X$ has the derived double centraliser property (resp. is isomorphic to a tilting complex in $D^b(A)$) if and only if for any indecomposable projective $A$-module $Ae$,
\begin{enumerate}
\item
there is a unique $i\in \mathbb{Z}$ such that
$\Hom_{D^b(A)}(Ae[i], \X)\neq 0;$
\item
there is an exact minimal left $\add H^{-i}(\X)$-approximation sequence
$$Ae\stackrel{f}\rightarrow X_0\rightarrow X_1\,\,\,\,  (\emph{resp.}\,\, Ae\stackrel{f}\rightarrow X_0\rightarrow X_1\rightarrow 0)$$
of $Ae$ such that $\ker f\in  \add H^{-(i+1)}(\X)$.
\end{enumerate}
\end{prop}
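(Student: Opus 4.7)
The plan is to reduce the statement to Theorem \ref{1.1} via the fact that every bounded complex over a hereditary algebra is formal. Since $A$ is hereditary, there is an isomorphism
\[
\X \cong \bigoplus_i H^i(\X)[-i]
\]
in $D^b(A)$, and consequently $\add \X$ coincides with $\add(\bigoplus_i H^i(\X)[-i])$. Using this decomposition together with projectivity of $Ae$, a direct computation gives
\[
\Hom_{D^b(A)}(Ae[i],\X) \cong \bigoplus_j \Ext_A^{-j-i}(Ae, H^j(\X)) \cong \Hom_A(Ae, H^{-i}(\X)),
\]
so condition (1) of the proposition is equivalent to condition (1) of Theorem \ref{1.1}. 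The hereditary hypothesis on $B=\End_{D^b(A)}(\X)$ together with the formality of $\X$ further lets me promote $\X$ to a bounded complex of $A$-$B$-bimodules whose right multiplication map is tautologically an isomorphism, so the hypotheses of Theorem \ref{1.1} are in force.

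Next I would translate the approximation data. Writing $\X_0 = \bigoplus_j X_{0,j}[-j]$ and $\X_1 = \bigoplus_j X_{1,j}[-j]$ with $X_{k,j}\in \add H^j(\X)$, the Hom-formula above forces any morphism $Ae[i]\to \X_0$ to live in the $(-i)$-th cohomological component, so minimality of a left $\add \X$-approximation forces $\X_0 = X_0[i]$ for some $X_0\in\add H^{-i}(\X)$. One then checks that $Ae[i]\to X_0[i]$ is such an approximation in $D^b(A)$ if and only if $Ae\to X_0$ is a left $\add H^{-i}(\X)$-approximation in $\mrm{mod}\, A$. The second map $g:\X_0\to\X_1$ of the approximation sequence splits, by the hereditary assumption, into a module component $g_1:X_0\to X_{1,-i}$ and an $\Ext^1$-component $g_2\in \Ext^1_A(X_0, X_{1,-i-1})$. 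The long exact sequence of $\cone(g)$ then produces
\[
0\to X_{1,-i-1}\to H^{-i-1}(\cone(g))\to \ker(g_1)\to 0
\]
with extension class $g_2$, while $H^{-i}(\cone(g))=\mrm{coker}(g_1)$.

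Setting $X_1:=X_{1,-i}$, the condition $Ae\cong H^{-i-1}(\cone(g))$ of Theorem \ref{1.1} corresponds, after identifying $X_{1,-i-1}$ with $\ker f$, to the module-theoretic assertion that $Ae\xrightarrow{f} X_0\xrightarrow{g_1} X_1$ is exact at $X_0$ with $\ker f\in \add H^{-i-1}(\X)$, which is condition (2) for the derived double centraliser property. The tilting variant is obtained by further requiring the triangle $Ae[i]\to \X_0\to \X_1\to Ae[i+1]$ to be distinguished, which amounts to additionally killing $H^{-i}(\cone(g))=\mrm{coker}(g_1)$ and hence to the surjectivity of $X_0\to X_1$. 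The main obstacle will be the precise bookkeeping of minimality and of the bimodule lift of $\X$ required to invoke Theorem \ref{1.1}; once the formality of $\X$ and the Hom-computation above are in place, the hereditary assumption prevents any interference from $\Ext^{\geq 2}$, and the translation between the derived and module-theoretic conditions becomes essentially mechanical.
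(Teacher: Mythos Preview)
Your approach is essentially the one taken in the paper: exploit formality of $\X$ over the hereditary algebra $A$, lift $\X$ to an $A$-$B$-bimodule complex, and then translate between the derived approximation conditions of Theorem~\ref{1.1} and the module-theoretic conditions degree by degree. The paper packages the translation step as a separate lemma (Lemma~\ref{ddcp}) and the bimodule lift as Lemma~\ref{rTX}; your sketch merges these into a single argument but follows the same line.

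One correction: the right multiplication map $r_{T(\X)}$ for the lifted bimodule complex is \emph{not} tautologically an isomorphism. The $B^{op}$-action you place on each $H^i(\X)$ factors through the projection $\End_{D^b(A)}(\X)\to\End_A(H^i(\X))$, so $r_{T(\X)}:B\to\End_{D^b(A)}(T(\X))^{op}\cong B$ is a priori only the ``keep the block-diagonal part'' map and could kill off-diagonal (radical) elements. The paper proves injectivity genuinely using the hereditary hypothesis on $B$: for hereditary $B$ one has $r_\X(b)\neq 0\Leftrightarrow \X\otimes^{\mathbf L}_BBb\neq 0$ (Lemma~\ref{act-here}), and since $Bb\cong Be$ for some idempotent $e$, one checks $T(\X)\otimes_BBe\neq 0$ componentwise. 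You should not list this step as tautological.

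A second point you gloss over: in passing from the derived approximation $Ae[i]\to\X_0\xrightarrow{g}\X_1$ to the module sequence, you need to know that $\X_1$ has no cohomology in degrees above $-i$, otherwise $\Hom_{D^b(A)}(\X_1,H^{-i}(\X)[i])$ acquires an unwanted $\Ext^1$-summand and the exactness check for the module approximation fails. The paper handles this by showing that $\cone(f)\to\X_1$ is itself a minimal left $\add\X$-approximation (using that $B$ is hereditary so the relevant $B^{op}$-module has projective dimension $\le 1$), whence the support of $\X_1$ is controlled by that of $\cone(f)$. This is more than bookkeeping and is where the hereditary assumption on $B$ enters the translation lemma.
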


Let $A$ be a lower triangular matrix algebra. Crawley-Boevey, Ma, Rognerud and Sauter classified \cite{CMRS} the $A$-modules with the double centraliser property and related them to some combinatorial objects. In the present paper, we classified all bounded complexes of $A$-bimodules with the derived double centraliser property, see Theorem \mref{A_n}.

The paper is organized as follows: In Section \mref{basic}, we give the necessary notation and definitions. In Section \mref{definition}, the definition of complexes with the derived double centraliser property is provided and the related basic properties are also given. In Section \mref{characterizations}, we characterize the complexes with the derived double centraliser property and two-sided tilting complexes over hereditary algebras and describe their homologies. In section \mref{triangular}, complexes with the derived double centraliser property over lower triangular matrix algebras are classified.

\section{Notation and definitions}\mlabel{basic}

Let $K$ be a field. Throughout, all algebras are assumed to be finite dimensional $K$-algebras and all modules are assumed to be finite dimensional left modules, unless stated otherwise. Let $A$ be an algebra. $A^{op}$ denotes the opposite algebra of $A$, hence $A^{op}$-modules represent right $A$-modules. $A^e$ denotes the enveloping algebra $A\otimes_KA^{op}$. Denote by $A$-mod the category of all $A$-modules and by $\proj A$ the full subcategory of $A$-mod which contains all projective $A$-modules.

\subsection{Complexes and categories}
Let $A$ be an algebra. A complex $\X=(X^i, d_X^i)$ of $A$-modules, we simply write $A$-complex, is a sequence of $A$-modules $X^i$ and $A$-module morphisms $d_X^i: X^i\rightarrow X^{i+1}$ such that $d_X^{i+1}d_X^i=0$ for all $i\in\mathbb{Z}$.
 A morphism $f^{\bullet}: \X\rightarrow \Y$ of $A$-complexes $\X$ and $\Y$ is a collection of morphisms $f^i: X^i\rightarrow Y^i$ of $A$-modules such that $d_Y^if^i=f^{i+1}d_X^i$.  The complex $\X$ is called \textit{bounded above} if there is
a number $n$ such that $X^i=0$ for all $i>n$, \textit{bounded below} if there is
a number $m$ such that $X^i=0$ for all $i<m$, and \textit{bounded} if it is both bounded above and bounded below. $\X$ is called \textit{radical} if all its differentials are radical morphisms.
Assume $X^i=0$ for all $i<m$ and $i>n$ and $X^m\neq 0\neq X^n$, then the \textit{width} of $\X$ is defined as $n-m+1$. Note that, in this paper, we will not distinguish an $A$-module $X$ and the $A$-complex $\X$ which is concentrated in degree $0$ and $X^0=X$.

We denote by $C(A)$ the category of $A$-complexes, by $K(A)$ (resp. $K^b(A)$, $K^+(A)$, $K^-(A)$) the homotopy category of (resp. bounded, bounded above, bounded below) $A$-complexes,  and by $D(A)$ (resp. $D^b(A)$, $D^+(A)$, $D^-(A)$) the derived category of (resp. bounded, bounded above, bounded below) $A$-complexes.


\subsection{Derived equivalences and tilting complexes}
Let $A$ and $B$ be algebras. Recall, due to Rickard \cite{R0} and Keller \cite{K1}, that the following conditions are equivalent:
\begin{itemize}
  \item $D^b(A)$ and  $D^b(B)$ are equivalent as triangulated categories;
  \item $K^b(\proj A)$ and  $K^b(\proj B)$ are equivalent as triangulated categories;
  \item There is a complex $T^\bullet\in K^b(\proj A)$ such that $\End_{D^b(A)}(T^\bullet)^{op}\cong B$, $T^\bullet$ is self-orthogonal (i.e., $\Hom_{D^b(A)}(T^\bullet, T^\bullet[i])=0$ unless $i=0$), and $\add T^\bullet$ generates $K^b(\proj A)$ as triangulated category.
\end{itemize}
If one of above conditions holds, algebras $A$ and $B$ are said \textit{derived equivalent}. An $A$-complex satisfies the third condition is called a \textit{tilting complex}. Later,
in \cite{R}, he promoted this setting further. A complex $\Delta^\bullet\in D^b(A\otimes_KB^{op})$ is called \textit{two-sided tilting} if there is a complex $\Theta^\bullet\in D^b(B\otimes_KA^{op})$ such that $\Delta^\bullet\OT_B\Theta^\bullet\cong {_AA_A}$ and $\Theta^\bullet\OT_A\Delta^\bullet\cong {_BB_B}$. Then
$$\Theta^\bullet\OT_A-: D^b(A)\leftrightarrow D^b(B): \Delta^\bullet\OT_B-$$
are mutually inverse triangle equivalences. Moreover, it is shown that for any tilting $A$-complex $T^{\bullet}$ with $\End_{D^b(A)}(T^{\bullet})^{op}\cong B$, there is a two-sided tilting $A$-$B$-complex $\X$ such that $\X\cong T^{\bullet}$ in $D^b(A)$. Combining with the result in \cite{CPS}, a two-sided tilting $A$-$B$-complexes with homology concentrated in degree $0$ is isomorphic to an $A$-$B$-bimodule $T$ such that $_AT$ is tilting and $B\cong \End_A(T)^{op}$ canonically. Such an $A$-$B$-bimodule $T$ we simply called \textit{tilting}. For the knowledge of tilting module, we refer to the book \cite{H}.
\subsection{Approximations}

\mlabel{approximation}
Let $\mathcal{C}$ be a Krull-Schmidt category. Let $X$ be an object in $\mathcal{C}$. We say $X$ is \textit{basic} if it is isomorphic to a direct sum of pairwise non-isomorphic indecomposable objects in $\mathcal C$. Denote by $\add X$ the full subcategory of $\mathcal{C}$ which contains all direct summands of direct sums of copies of $X$.
Let $\mathcal{D}$ be a full subcategory of $\mathcal{C}$. Recall that a sequence
$$\xi: X\stackrel{f}\rightarrow D_0\rightarrow D_1\rightarrow \cdots\rightarrow D_n$$
of morphisms in $\mathcal{C}$ is called a \textit{left $\mathcal{D}$-approximation sequence} of $X$ if $D_i\in \mathcal{D}$,
$$\Hom_{\mathcal{C}}(\xi, D'): \Hom_{\mathcal{C}}(D_n, D')\rightarrow\cdots\rightarrow\Hom_{\mathcal{C}}(D_1, D')\rightarrow  \Hom_{\mathcal{C}}(D_0, D')\rightarrow \Hom_{\mathcal{C}}(X, D')$$
is exact and $\Hom_{\mathcal{C}}(f, D')$ is an epimorphism, for any $D'\in \mathcal{D}$. A morphism $f: X\rightarrow Y$ in $\mathcal{C}$ is called \textit{left minimal} if any morphism $g\in \End_{\mathcal{C}}(Y)$ with $gf=f$ is an isomorphism. The sequence $\xi$ is called a minimal left $\mathcal{D}$-approximation sequence if it is a left $\mathcal{D}$-approximation sequence and every morphism in $\xi$ is left minimal. If $n=0$, (minimal) left $\mathcal{D}$-approximation sequence is abbreviated to (minimal) left $\mathcal{D}$-approximation.
Dually, there is the notion of \textit{right minimal} and \textit{(minimal) right $\mathcal{D}$-approximation (sequence)}.

\section{Definition and basic properties of complexes with \ddcp}\mlabel{definition}

In this section, we will provide the definition of complex of bimodules with the derived double centraliser property and then some related basic results are given.

\subsection{Definition}
Let $A$ and $B$ be two algebras. For an $A$-$B$-bimodule $X$, the natural left multiplication algebra homomorphism and right multiplication algebra homomorphism defined respectively as:
$$l_X: A\rightarrow \End_{B^{op}}(X),\, a\mapsto (x\mapsto ax),\,\,\,\,\,r_X: B\rightarrow \End_A(X)^{op},\, b\mapsto (x\mapsto xb).$$
We say that $X$ has \textit{the double centraliser property} if both $l_X$ and $r_X$ are isomorphisms. Moreover, we say that $_AX$  has the double centraliser property if the bimodule $_AX_{\End_A(X)}$ has the double centraliser property. Note that $l_X$ is an $A$-bimodule isomorphism and $r_X$ is a $B$-bimodule isomorphism.

Let $\X$ be an $A$-$B$-complex. Then there are natural maps
$$l'_{\X}: A\rightarrow \End_{C(B^{op})}(\X),\, a\mapsto (l_{X^i}(a))_i,\,\,\,\,\,r'_{\X}: B\rightarrow \End_{C(A)}(\X)^{op},\, b\mapsto (r_{X^i}(b))_i.$$
Note that the differentials of $\X$ are $A$-$B$-bimodule morphisms, so $(l_{X^i}(a))_i\in \End_{C(B^{op})}(\X)$ and $(r_{X^i}(b))_i\in \End_{C(A)}(\X)^{op}$. It is easy to check that $l'_{\X}$ and $r'_{\X}$ are algebra homomorphisms.
Now let
$$l_{\X}: A\stackrel{l'_{\X}}\rightarrow \End_{C(B^{op})}(\X)\rightarrow \End_{K(B^{op})}(\X)\rightarrow \End_{D(B^{op})}(\X),$$
$$r_{\X}: B\stackrel{r'_{\X}}\rightarrow \End_{C(A)}(\X)^{op}\rightarrow \End_{K(A)}(\X)^{op}\rightarrow \End_{D(A)}(\X)^{op}$$
where the last two maps of $l_{\X}$ and $r_{\X}$ are the natural homotopic algebra quotients and the localisation algebra homomorphisms.

\begin{defn}\mlabel{def}
Let $A$ and $B$ be algebras. For an $A$-$B$-complex $\X$, we say that $\X$ has the derived double centraliser property if both $l_{\X}$ and $r_{\X}$ are isomorphisms. Moreover, for an $A$-complex $\X$, we say that $\X$ has the derived double centraliser property if there is an $A$-$B$-complex $\Y$ for some algebra $B$ such that $\X\cong \Y$ in $D(A)$ and $\Y$ as $A$-$B$-complex has the derived double centraliser property.
\end{defn}

Let $A$ and $B$ be algebras. Let $\X$ and $\Y$ be $A$-$B$-complexes. Suppose $s: \Y\rightarrow\X$ is a quasi-isomorphism of $A$-$B$-complexes, then there is a map
$$S: \End_{D(A)}(\X)^{op}\rightarrow \End_{D(A)}(\Y)^{op},\, \bar{a}/\bar{b}\mapsto \bar{as}/\bar{bs}$$
 where for any $a\in \End_{C(A)}(\X)$, $\bar{a}$ represents the corresponding element of $a$ in $K(A)$.
Note that we used left roofs to represent morphisms in derived categories. It is easy to check that $S$ is an algebra isomorphism.

Next, we show that $r_{\Y}=r_{\X}S$. Recall that $r_{\X}(b)=\bar{r'_{\X}(b)}/1$ for $b\in B$. So we only need to prove that $\bar{r'_{\Y}(b)}/1=\bar{r'_{\X}(b)s}/\bar{s}$ in $\End_{D(A)}(\Y)^{op}$. This is due to the following commutative diagram:
\begin{displaymath}
\xymatrixrowsep{0.2in}
\xymatrixcolsep{0.5in}
\xymatrix{
 & \X\ar[d]^{1}& \\
\Y \ar[ur]^{\bar{r'_{\X}(b)s}}\ar[r]_{\bar{r'_{\X}(b)s}} \ar[dr]_{\bar{r'_{\Y}(b)}} &\X& \Y\ar[ul]_{\bar{s}}\ar[l]^{\bar{s}}\ar[dl]^{1}\\
 &\Y\ar[u]^{\bar{s}}&
}
\end{displaymath}
where $\bar{r'_{\X}(b)s}= \bar{s}\bar{r'_{\Y}(b)}$ because $s$ is a morphism of $B^{op}$-complexes.

So in this case, $r_{\X}$ is an isomorphism if and only if $r_{\Y}$ is an isomorphism. Similarly, $l_{\X}$ is an isomorphism if and only if $l_{\Y}$ is an isomorphism. Conclusively,
\begin{lemma}\mlabel{quasi}
Let $A$ and $B$ be two algebras. Let $\X$ and $\Y$ be two $A$-$B$-complexes. If $\X$ and $\Y$ are quasi-isomorphic as $A$-$B$-complexes, assume $S: \End_{D(A)}(\X)^{op}\rightarrow \End_{D(A)}(\Y)^{op}$ is the isomorphism induced by the quasi-isomorphism, then $r_{\Y}=r_{\X}S$. In this case, $\X$ has the derived double centraliser property if and only if so does $\Y$.
\end{lemma}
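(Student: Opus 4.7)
The plan is to formalize the diagrammatic argument that is already sketched in the paragraph preceding the lemma. First I would fix the given quasi-isomorphism $s:\Y\to\X$ of $A$-$B$-complexes and observe that its class $\bar s$ is invertible both in $D(A)$ and in $D(B^{op})$, since forgetting one of the actions still leaves a quasi-isomorphism. Using the calculus of left roofs I would then define
$$S:\End_{D(A)}(\X)^{op}\to\End_{D(A)}(\Y)^{op},\qquad \bar a/\bar b\mapsto \bar{as}/\bar{bs},$$
and verify the standard items: well-definedness (independence of the chosen roof), $K$-linearity, multiplicativity with respect to the opposite composition, and bijectivity (the inverse sends $\bar c/\bar d$ to a roof obtained by inverting $\bar s$ in $D(A)$). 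This is routine localization bookkeeping.

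Next I would establish the key identity $r_{\Y}=r_{\X}S$. By definition $r_{\X}(b)=\bar{r'_{\X}(b)}/1$ and $r_{\Y}(b)=\bar{r'_{\Y}(b)}/1$, so that $S(r_{\X}(b))=\bar{r'_{\X}(b)s}/\bar s$. Verifying $\bar{r'_{\Y}(b)}/1=\bar{r'_{\X}(b)s}/\bar s$ amounts to producing a common roof, which is exactly what the commutative diagram displayed in the excerpt does; the only nontrivial commutativity in that diagram is the equality
$$s\circ r'_{\Y}(b)=r'_{\X}(b)\circ s$$
in $C(A)$, and this is precisely the statement that $s$ commutes with the right $B$-action in each degree, i.e.\ that $s$ is a morphism of $B^{op}$-complexes. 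No homotopies are needed, so the identity already holds in $K(A)$ before passing to $D(A)$.

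The same recipe, applied to the underlying morphism of left $A$-modules, produces an algebra isomorphism $S':\End_{D(B^{op})}(\X)\to\End_{D(B^{op})}(\Y)$ satisfying $l_{\Y}=l_{\X}S'$. Since $S$ and $S'$ are isomorphisms, $r_{\X}$ is an isomorphism if and only if $r_{\Y}$ is, and likewise for $l_{\X}$ and $l_{\Y}$; combining these gives the final equivalence of the derived double centraliser property for $\X$ and $\Y$. The main point requiring care is the verification that $S$ is a well-defined algebra homomorphism in the localization, rather than merely a map of sets; everything after that reduces to the single identity $s\circ r'_{\Y}(b)=r'_{\X}(b)\circ s$ and a purely formal consequence.
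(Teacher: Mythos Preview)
Your proposal is correct and follows essentially the same approach as the paper: the paper's proof (given in the paragraphs immediately preceding the lemma) also fixes a single quasi-isomorphism $s:\Y\to\X$, defines $S$ by the same roof formula, and verifies $r_{\Y}(b)=S(r_{\X}(b))$ via the identity $s\circ r'_{\Y}(b)=r'_{\X}(b)\circ s$ coming from $B^{op}$-linearity of $s$, then invokes the symmetric argument for $l$. Your write-up is slightly more explicit about the localization bookkeeping (well-definedness and multiplicativity of $S$), but the argument is the same.
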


\begin{remark}\mlabel{bimodule}
\begin{enumerate}
\item
 Using Lemma \mref{quasi}, a module or a bimodule has the double centraliser property if and only if it has \ddcp.
\item
  It is shown by Rickard in \cite{R} that two-sided tilting complexes have the derived double centraliser property.
\item
Yekutieli showed \cite{Ye} dualizing complexes have \ddcp.
\item
More generally, as mention before, $A$-$B$-complexes $\X$ which satisfy $\RHom_A(\X, \X)^{op}\cong B$ and $\RHom_{B^{op}}(\X, \X)\cong A$ have the double centraliser property in the sense of Keller \cite{K}.
\end{enumerate}
\end{remark}

\subsection{Action on complexes}
Let $A$, $B$ and $C$ be algebras, $\X$ an $A$-$B$-complex, and $\Y$ an $A$-$C$-complex. Then $\Hom_{D(A)}(\X, \Y)$ has a $B$-$C$-bimodule structure. For $f\in \Hom_{D(A)}(\X, \Y), a\in A$ and $b\in B$, the left $B$-module structure is given by $bf=fr_{\X}(b)$ and the right $C$-module structure is given by $fc=r_{\Y}(c)f$.
In this subsection, we collect some results around this topic.

\begin{lemma}\mlabel{bi}
Let $A$ and $B$ be two algebras and $\X$ an $A$-$B$-bimodule complex. The following conditions are equivalent:
\begin{enumerate}
\item
$r_{\X}$ is an isomorphism,
\item
there is a $B$-module isomorphism $B\rightarrow \End_{D(A)}(\X)$,
\item
there is a $B^{op}$-module isomorphism $B\rightarrow \End_{D(A)}(\X)$.
\end{enumerate}
\end{lemma}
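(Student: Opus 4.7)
The plan is to prove both directions by elementary algebra once the natural $B$-bimodule structure on $\End_{D(A)}(\X)$ is in hand. First I would check that $r_{\X}$ is itself a $B$-bimodule homomorphism with respect to the actions introduced just before the statement: since $r_{\X}(bb') = r_{\X}(b') \circ r_{\X}(b)$ (the order reversal coming from the target $\End_{D(A)}(\X)^{op}$), this equality matches simultaneously the left action $b \cdot f := f \circ r_{\X}(b)$ and the right action $f \cdot b := r_{\X}(b) \circ f$. Consequently $(1) \Rightarrow (2)$ and $(1) \Rightarrow (3)$ are immediate: when $r_{\X}$ is bijective it itself serves as the required one-sided module isomorphism.

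For $(2) \Rightarrow (1)$, suppose $\varphi \colon B \to \End_{D(A)}(\X)$ is a left $B$-module isomorphism, and set $e := \varphi(1_B)$. Left $B$-linearity gives
\[
\varphi(b) \;=\; b \cdot e \;=\; e \circ r_{\X}(b) \quad \text{for every } b \in B.
\]
Injectivity of $\varphi$ then forces injectivity of $r_{\X}$: if $r_{\X}(b) = 0$ then $\varphi(b) = e \circ 0 = 0$, so $b = 0$. Since $\varphi$ is in particular a $K$-vector space isomorphism and $B$ is finite dimensional by the standing hypothesis of Section \ref{basic}, we have $\dim_K \End_{D(A)}(\X) = \dim_K B < \infty$, so the injective $K$-linear map $r_{\X}$ between these two spaces of equal dimension is automatically bijective. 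Since $r_{\X}$ is already an algebra homomorphism by construction, this bijection upgrades to an algebra isomorphism, establishing (1).

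The implication $(3) \Rightarrow (1)$ is entirely analogous: right $B$-linearity of a given $\varphi$ yields $\varphi(b) = r_{\X}(b) \circ e$, whence the same injectivity argument and dimension count apply. The proof has no real obstacle; the only point requiring care is keeping straight that the $B$-actions on $\End_{D(A)}(\X)$ implicit in (2) and (3) are precisely those induced by $r_{\X}$, so that any abstract one-sided isomorphism is forced to factor through $r_{\X}$ via evaluation at $1_B$.
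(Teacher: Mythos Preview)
Your proposal is correct and follows essentially the same argument as the paper: both establish that $r_{\X}$ is a $B$-bimodule homomorphism, deduce $(1)\Rightarrow(2),(3)$ directly, and for the converse evaluate the given one-sided isomorphism at $1_B$ to force injectivity of $r_{\X}$, then conclude by a dimension count using finite-dimensionality of $B$. The only difference is cosmetic---you name $e:=\varphi(1_B)$ and spell out the dimension step more explicitly---so there is nothing further to add.
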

\begin{proof}
The proof is similar to the bimodule version in \cite[Lemma 2.3]{ZL}.
For any $b, b'\in B$, $r_{\X}(bb')=r_{\X}(b')r_{\X}(b)=br_{\X}(b')$. Then $r_{\X}$ is a $B$-module morphism. Similarly, $r_{\X}$ is also a $B^{op}$-module morphism. Hence if $r_{\X}$ is an isomorphism, then it is a $B$-module isomorphism and is also a $B^{op}$-module isomorphism.
Suppose there is a $B$-module isomorphism $F: B\rightarrow \End_{D(A)}(\X)$. For any $0\neq b\in B$, $F(b)=bF(1)=F(1)r_{\X}(b)\neq 0$, hence $r_{\X}(b)\neq 0$, that is, $r_{\X}$ is injective. Moreover, since $B\cong\End_{D(A)}(\X)$ as $B$-modules, $\dim_K B=\dim_K\End_{D(A)}(\X)$, then we have $r_{\X}$ is an isomorphism.
The proof of right version is similar.
\end{proof}

\begin{lemma}\mlabel{r}
Let $A$ and $B$ be algebras and $\X$ a bounded $A$-$B$-complex. Let $b\in B$ and $f:B\rightarrow Bb$ the canonical epimorphism. Then $\X\OT_BBb= 0$ in $D^b(A)$ if and only if $\X\OT_B f=0$ in $D^b(A)$. Moreover, in this case, $r_{\X}(b)=0$.
\end{lemma}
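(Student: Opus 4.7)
The $(\Rightarrow)$ direction is immediate, since any morphism into the zero object must itself be zero. For $(\Leftarrow)$, I would apply the functor $\X\OT_B(-)$ to the short exact sequence $0 \to \ker f \to B \stackrel{f}{\to} Bb \to 0$ of left $B$-modules to obtain the distinguished triangle
$$\X\OT_B \ker f \stackrel{\alpha}{\to} \X \stackrel{\X\OT_B f}{\longrightarrow} \X\OT_B Bb \to \X\OT_B \ker f[1]$$
in $D^b(A)$. The hypothesis $\X\OT_B f = 0$ forces $\alpha$ to be a split epimorphism and the connecting morphism a split monomorphism by the usual triangulated-category argument, so $\X\OT_B Bb$ is a direct summand of $\X\OT_B \ker f[1]$ and $\X$ a direct summand of $\X\OT_B \ker f$. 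This is not yet enough to conclude vanishing, and is the reason the remaining work is needed.

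To upgrade the splitting to the actual vanishing $\X\OT_B Bb = 0$, I would pass to a bounded-above K-projective resolution $P \to \X$ as a complex of $A$-$B$-bimodules. Then $\X\OT_B Bb$ is represented by the bounded-above complex of projective $A$-modules $P\otimes_B Bb$, and $\X\OT_B f$ by the termwise surjective chain map $\pi = P\otimes_B f \colon P \to P\otimes_B Bb$ (surjective because $f$ is and each $P^n$ is right $B$-flat). Since $P$ is K-projective over $A$, the vanishing $\X\OT_B f = 0$ is equivalent to $\pi$ being null-homotopic, say $\pi = dh + hd$. Choosing a termwise section $\sigma$ of $\pi$ gives a graded decomposition $P \cong \ker\pi \oplus \sigma(P\otimes_B Bb)$; writing $d_P$ in block form with respect to this splitting and unravelling $\pi = dh + hd$ in components should produce a contracting homotopy for $P\otimes_B Bb$. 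Acyclicity then upgrades to contractibility (bounded-above acyclic complexes of projective $A$-modules are contractible), so $\X\OT_B Bb = 0$ in $D^b(A)$. The \emph{Moreover} clause is then immediate from the factorization $r_B(b) = i\circ f$ with $i\colon Bb \hookrightarrow B$ the inclusion: applying $\X\OT_B(-)$ yields $r_{\X}(b) = (\X\OT_B i)\circ(\X\OT_B f) = 0$ in $\End_{D^b(A)}(\X)$.

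The main obstacle lies in the final step: the section $\sigma$ is only graded, not a chain map, so the null-homotopy for $\pi$ does not transfer directly to a contracting homotopy for $P\otimes_B Bb$. The block-form calculation produces an ``error term'' determined by the off-diagonal block $\beta$ of $d_P$, and one must verify separately that the composite $h_K\beta$ is null-homotopic; here boundedness of $\X$ should make any required inductive argument terminate. A slicker alternative uses the adjunction $\X\OT_B(-) \dashv \RHom_A(\X,-)$: under it, $\X\OT_B f$ corresponds to $\eta\circ f$, where $\eta\colon Bb \to \RHom_A(\X, \X\OT_B Bb)$ is the counit corresponding to $\mathrm{id}_{\X\OT_B Bb}$, and one tries to deduce $\eta = 0$ from $\eta\circ f = 0$ using surjectivity of $f$. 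The subtlety there is that $f$, while an epimorphism in $B$-mod, need not be one in $D^b(B)$, so the argument requires additional input such as a resolution of $\RHom_A(\X,\X\OT_B Bb)$ reducing the question to the module level.
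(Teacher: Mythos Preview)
Your setup is exactly the paper's: pass to a bounded-above projective resolution $P$ of $\X$ over $A\otimes_K B^{op}$, observe that $\pi=P\otimes_B f$ is a termwise surjective chain map of bounded-above complexes of projective $A$-modules, and that the hypothesis makes $\pi$ null-homotopic. Your handling of the ``Moreover'' clause via the factorisation $r_B(b)=i\circ f$ is also the same as the paper's.

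Where you diverge is in extracting contractibility of $P\otimes_B Bb$ from the null-homotopy of $\pi$. Your graded splitting $P\cong\ker\pi\oplus\sigma(P\otimes_B Bb)$ and block computation is the hard road, and you correctly flag that the off-diagonal term obstructs it; the adjunction idea has the same defect you note. The paper avoids all of this with a one-line observation you are missing: work at the \emph{top} degree. If $P^i=0$ for $i>k$ and $P^k\neq 0$, then the null-homotopy equation $\pi^k=d^{k-1}h^k+h^{k+1}d_P^k$ collapses to $\pi^k=d^{k-1}h^k$ because $d_P^k=0$. Since $\pi^k$ is surjective, so is the differential $d^{k-1}$ of $P\otimes_B Bb$; projectivity of $(P\otimes_B Bb)^k$ makes it split, so a contractible two-term complex peels off. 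The remaining complex again receives a termwise surjective null-homotopic map from $P$ (compose $\pi$ with the projection), and its top degree has dropped. Iterating gives acyclicity of $P\otimes_B Bb$ in every degree, and a bounded-above acyclic complex of projectives is contractible.

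So the missing idea is simply: boundedness above lets you start at the top, where the homotopy equation has only one term and forces the incoming differential to be split epi. No graded sections, no block matrices, no adjunction tricks are needed.
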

\begin{proof}
If $\X\OT_BBb= 0$ in $D^b(A)$, then, obviously, $\X\OT_B f: \X\rightarrow \X\OT_BBb$ is zero morphism in $D^b(A)$.
Conversely, suppose that $\X\OT_B f: \X\rightarrow \X\OT_BBb$ is zero morphism in $D^b(A)$. Let $\Y$ be a projective resolution of $A$-$B$-complex $\X$. Then $\X\OT_B f=0$ in $D^b(A)$ implies $\Y\OT_B f=0$ in $K^-(A)$. So there is a homotopy $F:\Y\rightarrow \Y\OT_BBb[-1]$ for $\Y\OT_B f$. Assume $Y^i=0$ for $i>k$ and $Y^k\neq 0$. Then
$Y^k\otimes_Bf=d^{k-1}F^i$, where $d$ is the differential of $\Y\OT_BBb$. Note that $Y^k\otimes_Bf$ is surjective, then so is $d^{k-1}$. Since $\Y\OT_BBb$ is a complex of projective $A$-modules,  $d^{k-1}$ is split as a morphism of $A$-modules. Iteratively, we have $\Y\OT_B Bb=0$ in $K^-(A)$, then $\X\OT_BBb= 0$ in $D^b(A)$.

Let $h: B\stackrel{f}\rightarrow Bb\stackrel{g}\hookrightarrow B$ with $g$ the canonical embedding. To prove $r_{\X}(b)=0$, we only need to show that $r_{\X}(b)=0$ if and only if $\X\OT_B h=0$, since then $\X\OT_B h=\X\OT_B gf=(\X\OT_B g)(\X\OT_B f)=0$ in $D^b(A)$, so $r_{\X}(b)=0$.
Indeed, $\X$ is quasi-isomorphic to $\X\OT_BB$ as $A$-$B$-complexes, by Lemma \mref{quasi}, we have $r_{\X}(b)=0$ if and only if $r_{\X\OT_BB}(b)=0$. We point out that $r_{\X\OT_BB}(b)=\X\OT_B h$.
\end{proof}

\begin{exam}\mlabel{exam}
Let $A$ and $B$ be algebras and $\X$ a bounded $A$-$B$-complex. Note that, for $b\in B$, in general, $r_{\X}(b)=0$ could not imply $\X\OT_BBb= 0$ in $D^b(A)$.

Set $A=B=K(x)/(x^2)$ and $\X$ the $A^e$-complex concentrated in degrees $0$ and $1$ as $A^e\stackrel{r}\rightarrow A^e$, where $r$ is given by $r(a)=ax$ for $a\in A^e$. Then we can verify that $r'_{\X}(x)$ is homotopic to zero, hence $r_{\X}(x)=0$. However, $\X\OT_AAx\cong A^ex\oplus A^ex[-1]$ in $D^b(A)$ which is not the zero object (note that a two-term complex is homotopic to zero if and only if the connected morphism of the two terms is an isomorphism).
\end{exam}

\begin{lemma}\mlabel{direct}
Let $A$ and $B$ be two algebras and $\X$ a bounded $A$-$B$-complex. Assume $r_{\X}$ is an isomorphism and $\{e_1, e_2, \cdots, e_n\}$ is a complete set of primitive orthogonal idempotents of $B$, then $\oplus_{i=1}^n(\X\OT_BBe_i)$ is a decomposition of $\X$ into indecomposable objects in $D^b(A)$.
\end{lemma}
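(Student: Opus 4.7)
The plan is to lift the primitive orthogonal decomposition $1=\sum_{i=1}^n e_i$ of $B$ to a direct sum decomposition of $\X$ in $D^b(A)$ via $\OT_B$, and then to verify indecomposability of each summand by computing its endomorphism algebra through the isomorphism $r_{\X}$.

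First I would note that $B=\bigoplus_{i=1}^n Be_i$ as left $B$-modules. Since $B$ is its own projective resolution as a right $B$-module, there is a canonical isomorphism $\X\OT_B B\cong \X$ in $D^b(A)$. Applying $\X\OT_B-$ to the decomposition of $B$ therefore gives
\[
\X \;\cong\; \bigoplus_{i=1}^n \X\OT_B Be_i
\]
in $D^b(A)$, which is the decomposition claimed in the statement.

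For indecomposability of each summand, I would observe that under the identification $\X\cong \X\OT_B B$, the endomorphism $r_{\X}(e_i)$ of $\X$ factors as the split projection $\X\OT_B B\twoheadrightarrow \X\OT_B Be_i$ followed by the split inclusion $\X\OT_B Be_i\hookrightarrow \X\OT_B B$ coming from the direct sum decomposition above. Hence $\X\OT_B Be_i$ realises the image of the idempotent $r_{\X}(e_i)$, and
\[
\End_{D^b(A)}(\X\OT_B Be_i)\;\cong\; r_{\X}(e_i)\,\End_{D^b(A)}(\X)\,r_{\X}(e_i).
\]
Using the algebra isomorphism $r_{\X}: B\to \End_{D^b(A)}(\X)^{op}$, this corner corresponds to $e_iB^{op}e_i\cong (e_iBe_i)^{op}$. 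Since $e_i$ is primitive in $B$, the algebra $e_iBe_i$ is local and finite-dimensional, so its opposite is also local. Moreover $r_{\X}(e_i)\neq 0$ because $r_{\X}$ is injective and $e_i\neq 0$, so the summand $\X\OT_B Be_i$ is nonzero. Since $D^b(A)$ is Krull--Schmidt (endomorphism rings of bounded complexes over the finite-dimensional algebra $A$ are finite-dimensional over $K$), any nonzero object with local endomorphism ring is indecomposable.

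The main obstacle is essentially bookkeeping: one must carefully track how the opposite algebra structure on $\End_{D^b(A)}(\X)^{op}$ turns the corner at $r_{\X}(e_i)$ into $(e_iBe_i)^{op}$ rather than $e_iBe_i$ itself, and then observe that both primitivity and locality are preserved under passage to the opposite algebra, so that the argument concludes uniformly for every $i$.
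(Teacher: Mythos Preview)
Your proof is correct, but it follows a different route from the paper's for the indecomposability step. Both arguments begin identically with the decomposition $\X\cong\bigoplus_i \X\OT_B Be_i$. For indecomposability, the paper argues by counting: since $r_{\X}$ is an isomorphism, $\End_{D^b(A)}(\X)\cong B^{op}$ has exactly $n$ primitive orthogonal idempotents, so by Krull--Schmidt $\X$ has exactly $n$ indecomposable summands; it then invokes the earlier Lemma~\ref{r} to deduce that each $\X\OT_B Be_i$ is nonzero, whence all $n$ summands must already be indecomposable. You instead compute the endomorphism ring of each summand directly as the corner $r_{\X}(e_i)\End_{D^b(A)}(\X)r_{\X}(e_i)\cong (e_iBe_i)^{op}$, which is local, and deduce nonvanishing from $r_{\X}(e_i)\neq 0$ alone. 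Your approach is more self-contained (it bypasses Lemma~\ref{r} entirely) and yields the extra information $\End_{D^b(A)}(\X\OT_B Be_i)\cong (e_iBe_i)^{op}$; the paper's counting argument is shorter once Lemma~\ref{r} is in hand.
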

\begin{proof}
Note that  $\X\cong \X\OT_BB\cong \X\OT_B(\oplus_{i=1}^n Be_i)\cong\oplus_{i=1}^n(\X\OT_BBe_i)$ in $D^b(A)$. Since $r_{\X}$ is an isomorphism, the number of indecomposable direct summands of $\X$ in $D^b(A)$ is equal to the cardinal of a complete set of primitive orthogonal idempotents of $B$. So we only need to prove that, for each $e_i$, $\X\OT_BBe_i\neq 0$ in $D^b(A)$. But this directly follows from Lemma \mref{r}.
\end{proof}

\begin{lemma}\mlabel{left-minimal}
Let $A$ and $B$ be algebras. Let $\X$ be a bounded $A$-$B$-complex. Assume that $r_{\X}$ is an isomorphism. A sequence  $\xi: \Y\rightarrow \X_0\rightarrow \X_1$ in $D^b(A)$ is a minimal left $\add \X$-approximation sequence of $\Y$ if and only if
$$\Hom_{D^b(A)}(\xi, \X): \Hom_{D^b(A)}(\X_1, \X)\rightarrow\Hom_{D^b(A)}(\X_0, \X)\rightarrow \Hom_{D^b(A)}(\Y, \X)$$
is a minimal projective presentation of $B^{op}$-module $\Hom_{D^b(A)}(\Y, \X)$.
\end{lemma}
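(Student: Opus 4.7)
The plan is to use the contravariant functor $F := \Hom_{D^b(A)}(-, \X)$ to translate the approximation-sequence conditions on $\xi$ into projective-presentation conditions on $F(\xi)$. Since $r_{\X}: B \to \End_{D^b(A)}(\X)^{op}$ is assumed an isomorphism, one has $\End_{D^b(A)}(\X) \cong B^{op}$, so $\Hom_{D^b(A)}(W^{\bullet}, \X)$ acquires a natural left $B^{op}$-module structure (by left composition with endomorphisms of $\X$) for every $W^{\bullet} \in D^b(A)$. The first step is to establish that $F$ restricts to a contravariant equivalence from $\add \X$ onto $\proj B^{op}$, the full subcategory of finitely generated projective left $B^{op}$-modules. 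The essential ingredients are $F(\X) = B^{op}$ as a regular module, compatibility with finite direct sums and summands, and full faithfulness, which I would verify by a standard reduction to the tautology $\Hom_{D^b(A)}(\X^m, \X^n) = M_{n\times m}(B^{op}) = \Hom_{B^{op}}((B^{op})^n, (B^{op})^m)$: for summands $\X_0, \X_1$ of $\X^m, \X^n$ respectively, both $\Hom_{D^b(A)}(\X_0, \X_1)$ and $\Hom_{B^{op}}(F(\X_1), F(\X_0))$ are cut out by the same pair of idempotents.

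Next I would carry out the approximation-sequence translation. A morphism $h: W^{\bullet} \to \X'$ with $\X' \in \add \X$ is a left $\add \X$-approximation if and only if $\Hom_{D^b(A)}(h, \X)$ is surjective, because every object of $\add \X$ is a summand of some $\X^n$ and so testing against the single object $\X$ suffices. Hence $\xi: \Y \to \X_0 \to \X_1$ is a left $\add \X$-approximation sequence if and only if
$$F(\X_1) \to F(\X_0) \to F(\Y) \to 0$$
is exact, which, combined with the first step, is precisely a projective presentation of $F(\Y)$ in $B^{op}\text{-mod}$.

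Finally, I would match up the minimality conditions. A morphism $h: W^{\bullet} \to \X'$ in $D^b(A)$ with $\X' \in \add \X$ is left minimal if and only if every $\alpha \in \End_{D^b(A)}(\X')$ satisfying $\alpha h = h$ is an isomorphism; under the contravariant equivalence $F$ this is equivalent to right-minimality of $F(h)$, namely, every $F(\alpha) \in \End_{B^{op}}(F(\X'))$ with $F(h) F(\alpha) = F(h)$ being an isomorphism. Since a right-minimal surjection from a projective module is exactly a projective cover, applying this observation to both $f$ and $g$ in $\xi$ yields the equivalence between minimality of the approximation sequence and minimality of the projective presentation. The main obstacle I anticipate is the first step, i.e.\ verifying full faithfulness of $F$ on $\add \X$ purely from the hypothesis that $r_{\X}$ is an isomorphism; this is where the assumption is genuinely needed. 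Once that categorical dictionary is in hand, the remaining steps are formal translations.
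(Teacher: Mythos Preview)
Your approach is essentially the same as the paper's: establish that $F=\Hom_{D^b(A)}(-,\X)$ gives a duality $\add\X\leftrightarrow\proj B^{op}$, then translate both the approximation-sequence condition and the minimality conditions through $F$. One small point to make explicit: when you treat the first map $f:\Y\to\X_0$, the phrase ``under the contravariant equivalence $F$'' is not quite enough, since $\Y$ need not lie in $\add\X$. What you actually need (and what the paper invokes separately) is that the natural map $\Hom_{D^b(A)}(\Y,\X')\to\Hom_{B^{op}}(F(\X'),F(\Y))$ is bijective for every $\X'\in\add\X$ and \emph{arbitrary} $\Y$; this follows by the same reduction to $\X'=\X$ you outline, but it is a genuinely stronger statement than full faithfulness on $\add\X$ alone and is precisely what lets you pass from $F(gf)=F(f)$ back to $gf=f$.
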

\begin{proof}
 We abbreviate $(-)^*=\Hom_{D^b(A)}(-, \X)$. By definition, we may assume $\xi: \Y\stackrel{f}\rightarrow \X_0\stackrel{h}\rightarrow \X_1$ is a left $\add \X$-approximation sequence of $\Y$, $\Hom_{D^b(A)}(\xi, \X)$ is exact and $f^*$ is an epimorphism. Then we only need to prove that $f$ (resp. $h$) is left minimal if and only if $f^*$ (resp. $h^*$) is right minimal.  By Lemma \mref{bi}, $r_{\X}:B\rightarrow \End_{D^b(A)}(\X)$ is a $B^{op}$-module isomorphism, then there are mutually inverse dualities: $$\Hom_{D^b(A)}(-, \X): \add \X\leftrightarrow \proj B^{op}: \X\OT_B\Hom_{B^{op}}(-, B).$$
So $h$ is left minimal if and only if $h^*$ is right minimal.
 Suppose that $f^*$ is right minimal. For any $g\in \End_{D^b(A)}(\X_0)$ with $gf=f$, then $f^*g^*=f^*$, so $g^*$ is an isomorphism. Since $(-)^*: \add \X\rightarrow \proj B^{op}$ is a duality, $g$ is an isomorphism. Conversely, suppose that $f$ is left minimal. For any $g'\in \End_{B^{op}}((\X_0)^*)$ with $f^*g'=f^*$. Since $\End_{D^b(A)}(\X_0)\cong\End_{B^{op}}((\X_0)^*)$ canonically, there is $g\in\End_{D^b(A)}(\X_0)$ such that $g'=g^*$. Then we have $(gf)^*=f^*$.
Note that $r_{\X}$ is an isomorphism, then $\Hom_{D^b(A)}(\Y, \X_0)\cong\Hom_{B^{op}}((\X_0)^*, (\Y)^*)$ canonically. We have $gf=f$, hence $g$ is an isomorphism, then so is $g'$.
\end{proof}

\subsection{Several properties related to \ddcp}

\begin{lemma}\cite[Lemma 1.1]{KY}\mlabel{KY}
Let $A$ and $B$ be two algebras. Let $X$ and $Y$ be two $A$-$B$-bimodules. If $X\cong Y$ as $A$-modules and $r_X$ and $r_Y$ are isomorphisms, then there is $\beta\in \Aut(B)$ such that $X\cong Y_{\beta}$ as $A$-$B$-bimodules.
\end{lemma}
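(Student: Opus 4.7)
The plan is to build $\beta$ by transporting the right $B$-action through a given $A$-linear isomorphism. First, fix any isomorphism $\varphi: X\to Y$ of left $A$-modules, which exists by hypothesis. Conjugation $g\mapsto \varphi g\varphi^{-1}$ then yields an isomorphism of $K$-algebras $c_{\varphi}: \End_A(X)^{op}\to \End_A(Y)^{op}$ (it commutes with both compositions, so it is equally an isomorphism between the opposites).

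Next, I would define
\[
\beta := r_Y^{-1}\circ c_{\varphi}\circ r_X: B\longrightarrow B.
\]
Since $r_X$ and $r_Y$ are algebra isomorphisms by hypothesis and $c_{\varphi}$ is an algebra isomorphism, $\beta$ is an element of $\Aut(B)$.

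Finally, I would verify that $\varphi$ is an isomorphism $X\to Y_{\beta}$ of $A$-$B$-bimodules, where $Y_{\beta}$ denotes $Y$ with right $B$-action twisted by $\beta$ (so $y\cdot_{\beta}b=y\beta(b)$). The left $A$-linearity is already part of the assumption on $\varphi$. For the right $B$-action, the defining identity $r_Y(\beta(b))=\varphi\circ r_X(b)\circ \varphi^{-1}$ gives, for every $x\in X$ and $b\in B$,
\[
\varphi(xb)=\varphi(r_X(b)(x))=r_Y(\beta(b))(\varphi(x))=\varphi(x)\beta(b),
\]
which is exactly the compatibility required for $\varphi$ to be a morphism $X\to Y_{\beta}$ of $A$-$B$-bimodules.

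There is essentially no obstacle: once one recognizes that the double centraliser hypothesis lets one identify $B$ with $\End_A(X)^{op}$ and $\End_A(Y)^{op}$ on the two sides, the whole statement reduces to the fact that an $A$-linear isomorphism induces a conjugation between these endomorphism algebras, and pulling that back through the two identifications produces the desired automorphism $\beta$. The only mild point to be careful about is keeping track of the "op" in $\End_A(-)^{op}$ so that the map $\beta$ comes out as a genuine algebra automorphism rather than an anti-automorphism, but this is handled automatically by using $c_{\varphi}$ as above.
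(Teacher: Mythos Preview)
Your argument is correct and is the standard proof of this fact: transport the right $B$-action along a chosen $A$-linear isomorphism via conjugation, and read off $\beta$ through the identifications $r_X$, $r_Y$. The verification $\varphi(xb)=\varphi(x)\beta(b)$ is exactly what is needed.

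Note, however, that the paper does not supply its own proof of this lemma; it simply quotes the result from Kerner--Yamagata \cite[Lemma~1.1]{KY}. So there is nothing to compare against beyond saying that your argument is the natural one (and essentially the one in the cited reference). Your caution about the ``op'' is well placed: since conjugation $g\mapsto\varphi g\varphi^{-1}$ is already a ring isomorphism $\End_A(X)\to\End_A(Y)$, it is automatically an isomorphism of the opposite rings as well, and composing with $r_X$ and $r_Y^{-1}$ gives a genuine automorphism of $B$.
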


\begin{prop}\mlabel{two-sided}
Let $A$ and $B$ be two algebras and $\X$ an $A$-$B$-complex. If $\X$ is isomorphic to a tilting complex in $D^b(A)$ and $r_{\X}$ is an isomorphism, then $\X$ is two-sided tilting.
\end{prop}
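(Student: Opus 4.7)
The plan is to combine Rickard's lifting theorem with a derived analogue of Lemma \ref{KY}. First, since $r_{\X}$ is an isomorphism and $\X$ is isomorphic in $D^b(A)$ to a tilting complex $T^\bullet\in K^b(\proj A)$, we obtain identifications $B\cong \End_{D^b(A)}(\X)^{op}\cong \End_{D^b(A)}(T^\bullet)^{op}$. Rickard's lifting theorem, recalled in Section \ref{basic}, then produces a two-sided tilting complex $\Delta^\bullet\in D^b(A\otimes_KB^{op})$ with $\Delta^\bullet\cong T^\bullet$, and consequently $\Delta^\bullet\cong \X$, in $D^b(A)$. Since $\Delta^\bullet$ is two-sided tilting, Remark \ref{bimodule}(2) ensures that $r_{\Delta^\bullet}$ is likewise an isomorphism, so $\X$ and $\Delta^\bullet$ are on equal footing as bimodule complexes whose right-multiplication maps identify $B$ with their respective endomorphism rings in $D^b(A)$.

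The next step would be to lift a chosen $D^b(A)$-isomorphism $\phi:\X\to\Delta^\bullet$ to an isomorphism of bimodule complexes, after possibly twisting the right $B$-action by an algebra automorphism. Conjugation by $\phi$ defines an algebra isomorphism $\End_{D^b(A)}(\X)^{op}\to \End_{D^b(A)}(\Delta^\bullet)^{op}$, and composing it with $r_{\X}$ and $r_{\Delta^\bullet}^{-1}$ yields $\beta\in \Aut(B)$ satisfying $\phi\circ r_{\X}(b)=r_{\Delta^\bullet}(\beta(b))\circ\phi$ for every $b\in B$. Reading this identity as $B$-equivariance of $\phi$ viewed as a map $\X\to\Delta^\bullet_\beta$, where $\Delta^\bullet_\beta$ denotes $\Delta^\bullet$ with right $B$-action twisted by $\beta$, a derived analogue of Lemma \ref{KY} should then promote $\phi$ to an isomorphism $\X\cong \Delta^\bullet_\beta$ in $D^b(A\otimes_KB^{op})$.

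Finally, twisting a two-sided tilting complex on the right by a $K$-algebra automorphism preserves the two-sided tilting property: if $\Theta^\bullet$ is an inverse of $\Delta^\bullet$, then the correspondingly twisted complex ${}_\beta\Theta^\bullet$ serves as an inverse of $\Delta^\bullet_\beta$, the twists cancelling on both sides of the tensor products. Hence $\X\cong \Delta^\bullet_\beta$ is two-sided tilting. The main obstacle is the derived lifting step in the second paragraph: although the module case (Lemma \ref{KY}) is elementary, in the complex case one must ensure that an $A$-linear derived isomorphism which is only $B$-equivariant up to the automorphism $\beta$ can be realised by a genuine morphism of bimodule complexes after twisting. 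I expect this to follow by replacing one of the complexes by a suitable bimodule projective (or injective) resolution, so that the $B$-action may be compared at the chain level, and then applying the module-level argument term by term.
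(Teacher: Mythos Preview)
Your outline has the right endpoints but a genuine gap at the crucial step. The ``derived analogue of Lemma~\ref{KY}'' you invoke is precisely the hard part, and the sketch you give for it does not work. The equation $\phi\circ r_{\X}(b)=r_{\Delta^\bullet}(\beta(b))\circ\phi$ holds only in $D^b(A)$; passing to bimodule projective resolutions, $\phi$ is represented by an $A$-linear chain map whose compatibility with the right $B$-actions holds merely up to homotopy, and there is no mechanism for rectifying these homotopies to a strict bimodule chain map. The ``term by term'' idea fails because the individual components $\phi^n$ need not even be isomorphisms, let alone $B$-equivariant after a single global twist. That lifting homotopy-coherent $B$-actions to strict ones is genuinely delicate is exactly the content of Keller's work~\cite{Keller} alluded to in the introduction.

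The paper circumvents this obstacle rather than confronting it. Having chosen a two-sided tilting complex $\bar{\X}$ with inverse $\Y$, it applies the equivalence $\Y\otimes^{\mathbf L}_A-$ to $\X$; since $\X\cong\bar{\X}$ in $D^b(A)$, the complex $\Y\otimes^{\mathbf L}_A\X$ has homology concentrated in degree~$0$, and one checks via Lemma~\ref{bi} that $r_{H^0(\Y\otimes^{\mathbf L}_A\X)}$ is an isomorphism. Now the \emph{module-level} Lemma~\ref{KY} applies directly to give $H^0(\Y\otimes^{\mathbf L}_A\X)\cong B_\sigma$ as $B$-bimodules, hence $\Y\otimes^{\mathbf L}_A\X\cong B_\sigma$ in $D^b(B^e)$, and tensoring back with $\bar{\X}$ yields $\X\cong\bar{\X}\otimes^{\mathbf L}_B B_\sigma$ in $D^b(A\otimes_KB^{op})$. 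The point is that the derived equivalence collapses the comparison to a single bimodule, where Lemma~\ref{KY} is available without any lifting argument.
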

\begin{proof}
Since $\X$ is isomorphic to a tilting complex in $D^b(A)$ and $r_{\X}: B\rightarrow \End_{D^b(A)}(\X)^{op}$ is an isomorphism, there is a two-sided tilting $A$-$B$-complex $\bar{\X}$ such that $\bar{\X}\cong \X$ in $D^b(A)$, and there is a two-sided tilting $B$-$A$-complex $\Y$ such that $\bar{\X}\OT_B\Y\cong {_AA_A}$ and $\Y\OT_A\bar{\X}\cong {_BB_B}$. Then we have $\Y\OT_A \X\cong B$ in $D^b(B)$, hence $H^0(\Y\OT_A \X)\cong B$ as $B$-modules. Note that  $H^0(\Y\OT_A \X)$ is a $B$-bimodule. Then we have the following $B$-module isomorphisms:
\begin{equation*}
\begin{aligned}
B \stackrel{r_{\X}}\rightarrow &\End_{D^b(A)}(\X)\cong \Hom_{D^b(A)}(\X, \bar{\X}\OT_B\Y\OT_A\X)\cong\Hom_{D^b(A)}(\X, \RHom_B(\Y, \Y\OT_A\X))\\
&\cong \End_{D^b(B)}(\Y\OT_A \X)\cong \End_B(H^0(\Y\OT_A \X)).
\end{aligned}
\end{equation*}
By Lemma \mref{bi} or \cite[Lemma 2.3]{ZL}, $r_{H^0(\Y\OT_A \X)}$ is an isomorphism. Note that $_BB_B$ has the double centraliser property, by Lemma \mref{KY}, there is $\sigma\in \Aut(B)$ such that $H^0(\Y\OT_A \X)\cong B_\sigma$ as $B$-bimodules. Then $\Y\OT_A \X\cong B_\sigma$ in  $D^b(B^e)$. Hence $\X\cong \bar{\X}\OT_B\Y\OT_A \X\cong\bar{\X}\OT_BB_\sigma$ in  $D^b(A\otimes_kB^{op})$. Note that both $\bar{\X}$ and $B_\sigma$ are two-sided tilting complexes, by \cite[Proposition 4.1]{R}, $\X$ is a two-sided tilting $A$-$B$-complex.
\end{proof}

For algebras $A$ and $B$, let $\lambda(A, B)$ be the number of all bounded $A$-$B$-complexes, up to isomorphism and shift in $D^b(A\otimes_kB^{op})$, which have \ddcp.

\begin{prop}\mlabel{out}
Let $A$ and $B$ be two algebras and $\X$ a bounded $A$-$B$-complex. Let $\sigma\in \emph{Aut}(B)$.
\begin{enumerate}
\item
$\X$ has the derived double centraliser property if and only if so does $\X\OT_BB_{\sigma}$.
\item
Assume $r_{\X}$ is an isomorphism. Then $\X\OT_BB_{\sigma}\cong \X$ in $D^b(A\otimes_KB^{op})$ if and only if $\sigma$ is inner.
\end{enumerate}
So both $|\emph{Out}(A)|$ and $|\emph{Out}(B)|$ divide $\lambda(A, B)$ if $\lambda(A, B)$ is finite.
\end{prop}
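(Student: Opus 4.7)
The plan is to compare the centraliser data for $\X\OT_BB_\sigma$ with that of $\X$ via a natural identification at the chain level. Since $B_\sigma$ is free of rank one as a left $B$-module, there is a canonical iso $\alpha:\X\OT_BB_\sigma\xrightarrow{\sim}\X$ in $D^b(A)$, and a direct computation shows that the right $B$-action on $\X\OT_BB_\sigma$ transports under $\alpha$ to the twisted action on $\X$, i.e.\ $\alpha\circ r_{\X\OT_BB_\sigma}(b)=r_\X(\sigma(b))\circ\alpha$. The symmetric statement holds for the left side: there is a natural iso in $D^b(B^{op})$ under which $l_{\X\OT_BB_\sigma}$ corresponds to $l_\X$. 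Since $\sigma$ is an automorphism of $B$, both $r_{\X\OT_BB_\sigma}$ and $l_{\X\OT_BB_\sigma}$ are isomorphisms if and only if $r_\X$ and $l_\X$ are; this proves (1).

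For (2), the ``if'' direction follows from the standard observation that an inner automorphism $\sigma(b)=ubu^{-1}$ yields a $B$-bimodule isomorphism $B\xrightarrow{\sim}B_\sigma$ (for instance $x\mapsto xu^{-1}$), whence $\X\OT_BB_\sigma\cong\X\OT_BB\cong\X$ in $D^b(A\otimes_KB^{op})$. For the ``only if'' direction, suppose $\phi:\X\OT_BB_\sigma\xrightarrow{\sim}\X$ is an iso in $D^b(A\otimes_KB^{op})$, and consider $\psi:=\phi\circ\alpha^{-1}\in\End_{D^b(A)}(\X)$, which is an automorphism. Since $r_\X:B\to\End_{D^b(A)}(\X)^{op}$ is an algebra isomorphism, $\psi=r_\X(u)$ for a unique unit $u\in B$. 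The compatibility of $\phi$ with the right $B$-action reads $r_\X(b)\circ\phi=\phi\circ r_{\X\OT_BB_\sigma}(b)$; substituting $\phi=\psi\circ\alpha$ and using the transfer formula above, this simplifies to $r_\X(b)\circ r_\X(u)=r_\X(u)\circ r_\X(\sigma(b))$ in $\End_{D^b(A)}(\X)$, which (using that $r_\X$ reverses composition and is injective) collapses to $ub=\sigma(b)u$, so $\sigma$ is inner.

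The divisibility statement then follows because, by (1) and (2), $\emph{Out}(B)$ acts on the set $S$ of iso-and-shift classes of complexes with \ddcp\ via $[\X]\mapsto[\X\OT_BB_\sigma]$, and the action is free: if $\X\OT_BB_\sigma\cong\X[n]$ in $D^b(A\otimes_KB^{op})$, restricting to $D^b(A)$ via $\alpha$ gives $\X\cong\X[n]$ there, which forces $n=0$ since $\X$ is bounded and nonzero, after which (2) forces $\sigma$ to be inner. By the mirror argument using $l_\X$ and a twisted bimodule ${}_\tau A$ in place of $r_\X$ and $B_\sigma$, $\emph{Out}(A)$ also acts freely on $S$, yielding both divisibilities. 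The main technical obstacle throughout is pinning down the transfer formula $\alpha\circ r_{\X\OT_BB_\sigma}(b)=r_\X(\sigma(b))\circ\alpha$ (and its left-sided analogue); once this is in hand, everything else is an unwinding of definitions together with the algebra-isomorphism property of $r_\X$ (resp.\ $l_\X$).
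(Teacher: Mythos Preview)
There is a gap in your treatment of the $l$-side in part~(1). You assert a ``natural iso in $D^b(B^{op})$'' between $\X\OT_BB_\sigma$ and $\X$, parallel to your $\alpha$ on the $D^b(A)$ side, but no such isomorphism of objects exists in general: under $\alpha$ the right $B$-action on $\X\OT_BB_\sigma$ becomes the $\sigma$-twisted action on $\X$, and the right $B$-complex $\X_\sigma$ need not be isomorphic to $\X$ (take $A=K$, $B=K\times K$, $\X=Ke_1$ in degree zero, and $\sigma$ the swap of factors; then $\X\OT_BB_\sigma\cong Ke_2\not\cong Ke_1$ in $D^b(B^{op})$). The fix is to replace the nonexistent object-isomorphism by the auto-equivalence $F=-\OT_BB_\sigma$ of $D^b(B^{op})$: the induced algebra isomorphism $\End_{D^b(B^{op})}(\X)\to\End_{D^b(B^{op})}(\X\OT_BB_\sigma)$ carries $l_\X(a)$ to $l_{\X\OT_BB_\sigma}(a)$ for every $a\in A$ (immediate at the chain level, since $F(l'_\X(a))=l'_\X(a)\otimes\id_{B_\sigma}=l'_{\X\otimes_BB_\sigma}(a)$), so $l_{\X\OT_BB_\sigma}$ is an isomorphism iff $l_\X$ is. With this repair your proof of (1) is complete. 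The paper takes a different route here, computing $\End_{D^b(B^{op})}(\X\OT_BB_\sigma)\cong A$ as $A$-bimodules via $\RHom$-adjunction and the bimodule isomorphism $B_\sigma\cong{}_{\sigma^{-1}}B$, and then invoking Lemma~\mref{bi}.

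Your argument for part~(2), by contrast, is correct and more direct than the paper's. The paper passes to a projective resolution $P(\X)$, replaces it by a radical complex, upgrades the isomorphism in $K^-(A\otimes_KB^{op})$ to one in $C^-(A\otimes_KB^{op})$, and only then extracts the unit. You sidestep all of this by working entirely in $D^b(A)$: the composite $\psi=\phi\circ\alpha^{-1}$ lands in $\End_{D^b(A)}(\X)$, the assumption that $r_\X$ is an isomorphism lets you write $\psi=r_\X(u)$ for a unit $u$, and right $B$-linearity of $\phi$ together with your transfer formula for $\alpha$ gives the conjugation identity directly. The divisibility statement is also handled correctly.
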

\begin{proof}
$(1)$. Since $\X\OT_BB_{\sigma}\OT_BB_{\sigma^{-1}}\cong\X$ in $D^b(A\otimes_KB^{op})$, we just prove the necessity.
Assume $\X$ has the derived double centraliser property. Note that $\sigma^{-1}: B_{\sigma}\rightarrow{_{\sigma^{-1}}B}$ and $\sigma: B\rightarrow{_{\sigma}B_{\sigma}}$ are $B$-bimodule isomorphisms. Then we have the following $A$-bimodule isomorphisms:
\begin{equation*}
\begin{aligned}
\End_{D^b(B^{op})}(\X\OT_BB_{\sigma})&\cong H^0\RHom_{B^{op}}(\X\OT_BB_{\sigma},\X\OT_BB_{\sigma})\\
&\cong H^0\RHom_{B^{op}}(\X, \RHom_{B^{op}}(B_{\sigma},\X\OT_BB_{\sigma})\\
&\cong H^0\RHom_{B^{op}}(\X, \RHom_{B^{op}}({_{\sigma^{-1}}B},\X\OT_BB_{\sigma})\\
&\cong H^0\RHom_{B^{op}}(\X, \X)\cong A,
\end{aligned}
\end{equation*}
and $B$-bimodule isomorphisms: $\End_{D^b(A)}(\X\OT_BB_{\sigma})\cong \End_{D^b(B)}(B_{\sigma})\cong {_{\sigma}B_{\sigma}}\cong B$.
By Lemma \mref{bi} and its dual, $l_{\X\OT_BB_{\sigma}}$ and $r_{\X\OT_BB_{\sigma}}$ are isomorphisms, hence $\X\OT_BB_{\sigma}$ has the derived double centraliser property.

$(2)$. Let $P(\X)$ be a projective resolution of $A$-$B$-complex $\X$. Since $\X\OT_BB_{\sigma}\cong \X$ in $D^b(A\otimes_KB^{op})$ if only if  $P(\X)\OT_BB_{\sigma}\cong P(\X)$ in $K^-(A\otimes_KB^{op})$, we only need to prove that $P(\X)\OT_BB_{\sigma}\cong P(\X)$ in $K^-(A\otimes_KB^{op})$ if and only if $\sigma$ is inner.

Suppose $\sigma\in \Aut(B)$ is inner. Assume for any $b\in B$, $\sigma(b)=z^{-1}bz$ for an invertible element $z\in B$.  Then $r'_{\X}(z):P(\X)\rightarrow P(\X)\OT_BB_{\sigma}$ is an isomorphism in $C^-(A\otimes_KB^{op})$, so $P(\X)\OT_BB_{\sigma}\cong P(\X)$ in $K^-(A\otimes_KB^{op})$.

Coversely, suppose that $P(\X)\OT_BB_{\sigma}\cong P(\X)$ in $K^-(A\otimes_KB^{op})$. By \cite[pp. 112 (a)]{HX}, there is a radical $A$-$B$-complex $\Y$ such that $P(\X)\cong\Y$ in $K^-(A\otimes_KB^{op})$. Note that $\Y$ is also a complex of projective $A\otimes_KB^{op}$-modules. Then $\Y\OT_BB_{\sigma}\cong \Y$ in $K^-(A\otimes_KB^{op})$. It is easy to check that $\Y\OT_BB_{\sigma}$ is also radical. By \cite[pp. 113 (b)]{HX}, $\Y\OT_BB_{\sigma}\cong \Y$ in $C^-(A\otimes_KB^{op})$.

Let $F': \Y\rightarrow \Y\OT_BB_{\sigma}$ be an isomorphism in $C^-(A\otimes_KB^{op})$ and $F$ be the corresponding isomorphism of $F'$ in $D^-(A\otimes_KB^{op})$.
Note that $\X$ is quasi-isomorphic to $\Y$, by Lemma \mref{quasi}, $r_{\X}$ is an isomorphism if and only if so is $r_{\Y}$. We identify $\Y\OT_BB_{\sigma}$ with $\Y$ in $D^-(A)$. Then $F=r_{\Y}(z)\in \End_{D^-(A)}(\Y)$ for some $z\in B$. As $F$ is an isomorphism in $\End_{D^-(A)}(\Y)$, $z$ is invertible in $B$. Since $F'$ is a morphism of $B^{op}$-complexes, then for any $b\in B$, $F'r'_{\Y}(b)=r'_{\Y\OT_BB_{\sigma}}(b)F'$. Hence $Fr_{\Y}(b)=r_{\Y\OT_BB_{\sigma}}(b)F$. So $r_{\Y}(z)r_{\Y}(b)=r_{\Y\OT_BB_{\sigma}}(b)r_{\Y}(z)=r_{\Y}(\sigma(b))r_{\Y}(z)$. As $r_{\Y}$ is an algebra isomorphism, then $\sigma(b)=zbz^{-1}$. We are done.
\end{proof}

We record the following two results which will be used in the sequel.
\begin{prop}\cite[Proposition 2.3]{RZ}\mlabel{RZ}
Let $A$ and $B$ be two algebras. Let $\X$ and $\Y$ be two-sided tilting $A$-$B$-complexes. Then $\X\cong \Y$ in $D^b(B^{op})$ if and only if there is $\alpha\in\Aut(A)$ such that $\X\cong {_\alpha A}\OT_A\Y$ in $D^b(A\otimes_KB^{op})$.
\end{prop}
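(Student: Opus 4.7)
The plan is to prove the two implications separately. The easy direction ($\Leftarrow$) is immediate: since ${_\alpha A} \cong A$ as right $A$-modules, we have ${_\alpha A} \OT_A \Y \cong A \OT_A \Y \cong \Y$ in $D^b(B^{op})$, the twist by $\alpha$ being invisible once the left $A$-action is forgotten.

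For the forward direction ($\Rightarrow$), the strategy is to concentrate the discrepancy between $\X$ and $\Y$ into a single $A$-bimodule complex $M$, and then argue that $M$ is forced to have the form ${_\alpha A}$. Let $\Theta^\bullet \in D^b(B \otimes_K A^{op})$ be the two-sided tilting inverse of $\Y$, so $\Y \OT_B \Theta^\bullet \cong {_AA_A}$ and $\Theta^\bullet \OT_A \Y \cong {_BB_B}$, and let $\Phi^\bullet$ be the analogous inverse of $\X$. Set $M := \X \OT_B \Theta^\bullet \in D^b(A^e)$. A direct computation with associativity of tensor products shows that $M$ is two-sided tilting in $D^b(A^e)$ with inverse $\Y \OT_B \Phi^\bullet$, so $M$ has the derived double centraliser property by Remark \mref{bimodule}(2); in particular, $l_M: A \to \End_{D^b(A^{op})}(M)$ is an algebra isomorphism.

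Next, applying $- \OT_B \Theta^\bullet: D^b(B^{op}) \to D^b(A^{op})$ to the hypothesized isomorphism $\X \cong \Y$ yields $M \cong \Y \OT_B \Theta^\bullet \cong A$ in $D^b(A^{op})$. Fix such an isomorphism $\varphi: M \to A$ in $D^b(A^{op})$. Because $\End_{D^b(A^{op})}(A) = A$ acting by left multiplication, conjugation by $\varphi$ identifies $\End_{D^b(A^{op})}(M)$ with $A$; composing with the isomorphism $l_M$ produces an algebra automorphism $\alpha \in \Aut(A)$, uniquely determined by the requirement that $\varphi$ intertwines left multiplication by $a$ on $M$ with left multiplication by $\alpha(a)$ on $A$. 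This is exactly the assertion that $\varphi$ upgrades to an isomorphism $M \cong {_\alpha A}$ in $D^b(A^e)$. Tensoring on the right over $A$ with $\Y$ then gives
$$\X \cong \X \OT_B \Theta^\bullet \OT_A \Y \cong M \OT_A \Y \cong {_\alpha A} \OT_A \Y$$
in $D^b(A \otimes_K B^{op})$, as desired.

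The main obstacle is the promotion step: lifting $\varphi$ from a right $A$-linear isomorphism to an $A^e$-isomorphism requires not merely an algebra endomorphism $\alpha$ making the conjugation diagram commute, but that $\alpha$ be \emph{bijective}, so that ${_\alpha A}$ is a well-defined invertible bimodule. This is precisely where the two-sided tilting hypothesis enters essentially, via the derived double centraliser property of $M$. The remaining manipulations are routine bookkeeping with tensor products of two-sided tilting complexes.
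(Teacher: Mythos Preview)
The paper does not supply its own proof of this statement; it is quoted from \cite[Proposition 2.3]{RZ} and used as a black box. Your overall strategy---reducing to the complex $M = \X \OT_B \Theta^\bullet$, showing $M \cong A$ in $D^b(A^{op})$, and extracting the automorphism $\alpha$ from the left action---is the standard one, and your identification of the key role played by $l_M$ being an isomorphism is correct.

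There is, however, a genuine gap in the promotion step. You assert that the relation $\varphi \circ l_M(a) = \alpha(a)\cdot\varphi$ in $D^b(A^{op})$ ``is exactly the assertion that $\varphi$ upgrades to an isomorphism $M \cong {_\alpha A}$ in $D^b(A^e)$.'' This inference is not valid as stated: a morphism in $D^b(A^{op})$ that commutes with the left $A$-actions (viewed as endomorphisms in $D^b(A^{op})$) need not arise from a morphism in $D^b(A^e)$, since morphisms in the derived category of bimodules are not determined by such one-sided compatibilities. The missing ingredient is the observation that $M \cong A$ in $D^b(A^{op})$ forces $H^i(M)=0$ for $i\neq 0$, so that the canonical map between $M$ and $H^0(M)$ is a quasi-isomorphism of $A^e$-complexes and hence an isomorphism in $D^b(A^e)$. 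One then works with the honest $A$-bimodule $N:=H^0(M)$: a right $A$-module isomorphism $\psi:N\to A$ transports the left action to an algebra endomorphism $\alpha$ of $A$, and $\psi$ is then tautologically an $A$-bimodule isomorphism $N\to{_\alpha A}$; bijectivity of $\alpha$ follows from $l_M$ (equivalently $l_N$, via Lemma \mref{quasi}) being an isomorphism. The paper carries out precisely this reduction-to-$H^0$ manoeuvre in the closely analogous proof of Proposition \mref{two-sided}, invoking Lemma \mref{KY} for the module-level step; you should insert the same reduction here.
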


\begin{prop}\cite[Proposition 2.1]{AS}\mlabel{AS}
Let $A$ be an algebra and $M$ an $A$-module. Then $M$ has \dcp if and only if there exists an exact sequence
$$0\rightarrow A\stackrel{f}\rightarrow M_0\rightarrow M_1$$
such that $M_0, M_1\in$ \emph{add} $M$ and $f:A\rightarrow M_0$ is a left \emph{add} $M$-approximation.
\end{prop}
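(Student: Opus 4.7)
The plan is to characterize $l_M:A\to\End_{B^{op}}(M)$ being an isomorphism, where $B=\End_A(M)^{op}$, by relating the desired exact sequence to a projective presentation of $M$ as a right $B$-module. Both directions rest on the canonical identification $\Hom_A(A,M)\cong M$, which intertwines $l_M$ with the natural map $A\to\End_{B^{op}}(\Hom_A(A,M))$.

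For the direction $(\Rightarrow)$, I would begin with a finite projective presentation $B^m\to B^n\to M\to 0$ of $M$ as a right $B$-module, corresponding to a choice of generators $m_1,\dots,m_n\in M$. Applying the left exact functor $\Hom_{B^{op}}(-,M)$ yields an exact sequence $0\to\End_{B^{op}}(M)\to M^n\to M^m$ of left $A$-modules; using $l_M$ to identify $\End_{B^{op}}(M)$ with $A$ produces an exact sequence $0\to A\stackrel{f}\to M^n\to M^m$ with $f(a)=(am_1,\dots,am_n)$. To see that $f$ is a left $\add M$-approximation I would note that any $A$-linear map $A\to M^k$ is specified by an element $(x_1,\dots,x_k)\in M^k$, and since the $m_i$ generate $M$ over $B$ each $x_j$ can be written as $\sum_i m_ib_{ij}$; the matrix $(b_{ij})$ then provides the factorization through $f$.

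For the direction $(\Leftarrow)$, a mild reduction allows me to assume $M_0=M^n$ and $M_1=M^m$: enlarging $M_0$ by adding a summand $N$ complementing it inside some $M^n$ and simultaneously enlarging $M_1$ to $M_1\oplus N$, with the new component of $g$ equal to $\id_N$, preserves both exactness of the sequence and the approximation property of $f$. Writing $m_0=f(1)=(m_1,\dots,m_n)$, so that $f(a)=am_0$, injectivity of $f$ forces $\bigcap_i\mrm{ann}_A(m_i)=0$, from which injectivity of $l_M$ is immediate. For surjectivity, given $\psi\in\End_{B^{op}}(M)$, form $\phi(m_0):=(\psi(m_1),\dots,\psi(m_n))\in M^n$ and represent $g:M^n\to M^m$ as a matrix $(g_{ji})$ with entries $g_{ji}\in\End_A(M)\cong B^{op}$. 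The crucial calculation
\[
g(\phi(m_0))_j=\sum_i g_{ji}(\psi(m_i))=\sum_i\psi(g_{ji}(m_i))=\psi\Bigl(\sum_i g_{ji}(m_i)\Bigr)=0
\]
uses the right $B$-linearity of $\psi$ (applied to each $g_{ji}$, viewed as right multiplication by the corresponding element of $B$) together with $g(m_0)=0$. Hence $\phi(m_0)\in\ker g=\im f$, producing $a\in A$ with $\psi(m_i)=am_i$ for every $i$.

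The last and, in my view, most delicate point is the auxiliary lemma that $m_1,\dots,m_n$ generate $M$ as a right $B$-module, which is needed both for the approximation check in $(\Rightarrow)$ and for concluding $\psi=l_M(a)$ in $(\Leftarrow)$. For any $m\in M$, the $A$-map $A\to M$, $1\mapsto m$, factors through $f$ by the approximation property as $h\circ f$ with $h:M^n\to M$ given by an $n$-tuple in $\End_A(M)\cong B^{op}$; evaluating at $1$ exhibits $m$ as a right $B$-combination of the $m_i$. Once this generation is established, the identity $\psi(m_i)=am_i$ together with the $B^{op}$-linearity of both $\psi$ and $l_M(a)$ propagates the equality to all of $M$, completing the proof of surjectivity of $l_M$.
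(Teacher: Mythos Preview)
The paper does not supply its own proof of this proposition; it is quoted verbatim from Auslander--Solberg \cite{AS} and used as a black box. Your argument is correct and is essentially the standard one: the equivalence is obtained by transporting a projective presentation of $M_B$ across the duality $\Hom_A(-,M):\add M\leftrightarrow\proj B^{op}$, which is precisely the module-level shadow of the machinery the paper develops in Lemmas~\ref{left-minimal} and~\ref{app} for the derived setting.

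Two minor comments on presentation. First, in the reduction step of $(\Leftarrow)$ you only explain how to replace $M_0$ by $M^n$; to replace $M_1$ by $M^m$ one needs a second (easier) step, namely composing $g$ with a split inclusion $M_1\hookrightarrow M^m$, which leaves $\ker g$ unchanged. Second, in your ``auxiliary lemma'' paragraph you say the generation statement is needed in both directions; in $(\Rightarrow)$ the $m_i$ are chosen as generators to begin with, so only the approximation property needs the argument you give (which is in fact the converse: the approximation property \emph{implies} generation), while in $(\Leftarrow)$ the generation is genuinely deduced from the approximation hypothesis. Neither point affects correctness.
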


\section{For hereditary algebras}\mlabel{characterizations}

In this section, explicit characterizations for complexes with the double centraliser property and two-sided tilting complexes over hereditary algebras are given. Hereditary algebras is a well-studied class of algebras with a lot of applications. The structure theory appears in categorifications of many mathematical objects such as cluster algebras and Lie algebras.

Let $A$ be a hereditary algebra. We will frequently use one of the following fundamental equivalent characterizations for hereditary algebras: $(1)$ $\gl(A)\leq 1$, where $\gl(A)$ denotes the global dimension of $A$; $(2)$ Any submodule of a projective $A$-module is still projective. Moreover, it is known that indecomposable objects in $D^b(A)$ is isomorphic to those of the form $X[i]$, for some indecomposable $A$-module $X$ and $i\in \mathbb{Z}$. Given another $A$-module $Y$ and $j\in\mathbb{Z}$, $\Hom_{D^b(A)}(X[i], Y[j])$ is isomorphic to $\Hom_A(X, Y)$ if $j=i$, to $\Ext^1_A(X, Y)$ if $j=i+1$, or to $0$ otherwise. Meanwhile, for a bounded $A$-complex $\X$, $\X\cong\oplus_{i\in\mathbb{Z}} H^i(\X)[-i]$ in $D^b(A)$, see \cite{H}.

\subsection{Characterizations I}

In this subsection, we will characterize $A$-$B$-complexes with the derived double centraliser property and two-sided tilting $A$-$B$-complexes where $A$ and $B$ are algebras and $B$ is hereditary.

\begin{lemma}\mlabel{app}
Let $A$ and $B$ be algebras. Let $\X$ be a bounded $A$-$B$-complex. Assume that $B$ is hereditary and $r_{\X}:B\rightarrow\End_{D^b(A)}(\X)^{op}$ is an isomorphism. Then for any bounded $A$-complex $\Y$, there is a minimal left $\add \X$-approximation sequence
$$\Y\rightarrow \X_0\stackrel{g}\rightarrow \X_1$$
 of $\Y$ such that $\cone(g)\cong \RHom_{B^{op}}(\Hom_{D^b(A)}(\Y, \X), \X)[1]$ in $D^b(A)$.
\end{lemma}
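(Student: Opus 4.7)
Set $M := \Hom_{D^b(A)}(\Y, \X)$, which carries a natural $B^{op}$-module structure via $r_{\X}$. The strategy is to construct the desired approximation sequence by lifting a minimal projective presentation of $M$ through the duality established in Lemma~\ref{left-minimal}, and then read off the cone from the resolution.

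First I would exploit that $B$ is hereditary: the $B^{op}$-module $M$ admits a minimal projective resolution
$$0 \to P_1 \xrightarrow{h} P_0 \to M \to 0$$
of length at most one in $B^{op}$-mod. Recall from the proof of Lemma~\ref{left-minimal} that the functors
$$\Hom_{D^b(A)}(-,\X)\colon \add \X \leftrightarrow \proj B^{op}\colon \X \OT_B \Hom_{B^{op}}(-, B)$$
are mutually quasi-inverse dualities. Applying the right-hand functor to $h$ produces a morphism $g\colon \X_0 \to \X_1$ in $\add \X$, where $\X_i = \X \OT_B \Hom_{B^{op}}(P_i, B)$, and applying $\Hom_{D^b(A)}(-,\X)$ back recovers the presentation $P_1 \xrightarrow{h} P_0 \to M \to 0$. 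This gives a candidate sequence $\Y \to \X_0 \xrightarrow{g} \X_1$ (the first map obtained by lifting the counit/epimorphism $P_0 \twoheadrightarrow M$), and Lemma~\ref{left-minimal} immediately tells us it is a minimal left $\add \X$-approximation sequence, since its image under $\Hom_{D^b(A)}(-,\X)$ is the minimal projective presentation we started with.

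The remaining task is to identify $\cone(g)$. The two-term complex $(\X_0 \xrightarrow{g} \X_1)$, with $\X_0$ placed in degree $0$ and $\X_1$ in degree $1$, can be rewritten through the duality as $\Hom_{B^{op}}(P_\bullet, \X)$, where $P_\bullet$ is the projective resolution of $M$ placed in non-positive degrees. Since $P_\bullet$ is a projective resolution, this two-term complex represents $\RHom_{B^{op}}(M, \X)$ in $D^b(A)$. Shifting by one and unpacking the standard mapping cone convention (in which $\cone(g)$ equals the complex $(\X_0 \to \X_1)$ placed in degrees $-1$ and $0$, i.e. $(\X_0 \to \X_1)[1]$) yields the desired isomorphism $\cone(g) \cong \RHom_{B^{op}}(M, \X)[1]$ in $D^b(A)$.

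The main technical point is the bookkeeping in the last step: one must check that the duality between $\add \X$ and $\proj B^{op}$ truly carries $\Hom_{B^{op}}(P_\bullet, B)$ back to the complex $(\X_0 \to \X_1)$ with the \emph{correct} sign and degree conventions, so that when we form the cone we recover exactly $\RHom_{B^{op}}(M,\X)[1]$ rather than some twist or shift of it. Everything else — the existence of the length-one resolution, the lifting to $\add \X$, and the minimality assertion — is essentially formal given the hereditary hypothesis on $B$ and Lemma~\ref{left-minimal}.
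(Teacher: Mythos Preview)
Your proposal is correct and follows essentially the same approach as the paper: start from a minimal projective presentation of $M=\Hom_{D^b(A)}(\Y,\X)$ in $B^{op}$-mod (using that $B$ is hereditary), pass through the duality $\add\X\leftrightarrow\proj B^{op}$ and Lemma~\ref{left-minimal} to obtain the minimal approximation sequence, and then identify the cone via $\RHom_{B^{op}}(M,\X)$. The only cosmetic difference is that the paper computes the cone by applying $\RHom_{B^{op}}(-,\X)$ to the distinguished triangle arising from the short exact sequence $0\to P_1\to P_0\to M\to 0$ and invoking that $\RHom_{B^{op}}(\Hom_{D^b(A)}(-,\X),\X)$ is the identity on $\add\X$, whereas you read off the same identification directly from the two-term complex $\Hom_{B^{op}}(P_\bullet,\X)$; these are the same computation phrased two ways.
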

\begin{proof}
Since $r_{\X}:B\rightarrow\End_{D^b(A)}(\X)$ is a $B^{op}$-module isomorphism by Lemma \mref{bi} and $B$ is hereditary, let
\begin{equation}\tag{\dag}
0\rightarrow\Hom_{D^b(A)}(\X_1, \X)\rightarrow\Hom_{D^b(A)}(\X_0, \X)\rightarrow\Hom_{D^b(A)}(\Y, \X)\rightarrow 0
\end{equation}
 be a minimal projective presentation of the $B^{op}$-module $\Hom_{D^b(A)}(\Y, \X)$ with $\X_0, \X_1\in \add \X$. By Lemma \mref{left-minimal}, we have a minimal left $\add \X$-approximation sequence
 $\Y\rightarrow \X_0\stackrel{g}\rightarrow \X_1$ of $\Y$ in $D^b(A)$.
 Applying $\RHom_B(-, \X)$ to the distinguished triangle in $D^b(B^{op})$ induced by the exact sequence (\dag), we get a distinguished triangle
  \begin{equation*}
 \begin{aligned}
& \RHom_{B^{op}}(\Hom_{D^b(A)}(\Y, \X), \X)\rightarrow\RHom_{B^{op}}(\Hom_{D^b(A)}(\X_0, \X), \X) \rightarrow \\
&\RHom_{B^{op}}(\Hom_{D^b(A)}(\X_1, \X), \X)\rightarrow \RHom_{B^{op}}(\Hom_{D^b(A)}(\Y, \X), \X)[1]
\end{aligned}
\end{equation*}
in $D^b(A)$. Since $r_{\X}$ is an isomorphism, the endofunctor $\RHom_B(\Hom_{D^b(A)}(-, \X), \X)$ on $\add \X$ is an identity. Then we get the following  distinguished triangle
$$\RHom_{B^{op}}(\Hom_{D^b(A)}(\Y, \X), \X)\rightarrow \X_0\stackrel{g}\rightarrow\X_1\rightarrow \RHom_{B^{op}}(\Hom_{D^b(A)}(\Y, \X), \X)[1].$$
Hence $\cone(g)\cong \RHom_{B^{op}}(\Hom_{D^b(A)}(\Y, \X), \X)[1]$ in $D^b(A)$.
\end{proof}

\begin{theorem}\mlabel{charact}
Let $A$ and $B$ be algebras. Let $\X$ be a bounded $A$-$B$-complex. Assume that $B$ is hereditary and $r_{\X}:B\rightarrow\End_{D^b(A)}(\X)^{op}$ is an isomorphism. Then $\X$ has the derived double centraliser property if and only if for any indecomposable projective $A$-module $Ae$,
\begin{enumerate}
\item
there is a unique $i\in \mathbb{Z}$ such that $\Hom_{D^b(A)}(Ae[i], \X)\neq 0$;
\item
there exists a minimal left $\add \X$-approximation sequence
$$Ae[i]\rightarrow \X_0\stackrel{g}\rightarrow \X_1$$
of $Ae[i]$ in $D^b(A)$ such that $Ae\cong H^{-(i+1)}(\cone(g))$ as $A$-modules.
\end{enumerate}
\end{theorem}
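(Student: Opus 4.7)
The plan is to leverage Lemma \mref{app}, which for $\Y = Ae[i]$ produces a minimal left $\add \X$-approximation sequence $Ae[i] \to \X_0 \xrightarrow{g} \X_1$ with $\cone(g) \cong \RHom_{B^{op}}(\Hom_{D^b(A)}(Ae[i], \X), \X)[1]$ in $D^b(A)$. Since $Ae$ is projective, one has $\Hom_{D^b(A)}(Ae[i], \X) \cong H^{-i}(e\X)$ as $B^{op}$-modules. Both directions of the theorem will then reduce to understanding the cohomological concentration of the $B^{op}$-complex $e\X$ and to identifying $\Hom_{D^b(B^{op})}(e\X, \X)$ with $Ae$ as a left $A$-module.

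For the forward direction, assume the \ddcp. Since $l_{\X}$ is then an algebra isomorphism, it sends the primitive idempotent $e \in A$ to a primitive idempotent $l_{\X}(e)$ of $\End_{D^b(B^{op})}(\X)$, so $e\X$ is an indecomposable summand of $\X$ in $D^b(B^{op})$. Since $B^{op}$ is hereditary, every bounded indecomposable complex is isomorphic to a shift of an indecomposable module, so $e\X \cong N_e[i_e]$ in $D^b(B^{op})$ for a unique $i_e \in \mathbb{Z}$; this is exactly condition (1). For (2), apply Lemma \mref{app} to $\Y = Ae[i_e]$ and compute
\[H^{-(i_e+1)}(\cone(g)) = H^{-i_e}(\RHom_{B^{op}}(N_e, \X)) = H^0(\RHom_{B^{op}}(e\X, \X)) = \Hom_{D^b(B^{op})}(e\X, \X),\]
using $N_e \cong e\X[-i_e]$ together with the shift identity $\RHom(M[j], N) \cong \RHom(M, N)[-j]$. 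The last hom-space is canonically the right ideal $\End_{D^b(B^{op})}(\X) \cdot l_{\X}(e)$, which under $l_{\X}$ matches the left $A$-module $Ae$.

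For the converse direction, assume (1) and (2). Condition (1) forces $e\X$ to have cohomology only in degree $-i_e$, so $e\X \cong N_e[i_e]$ in $D^b(B^{op})$. Combining Lemma \mref{app} with (2) yields a left $A$-module isomorphism $\Hom_{D^b(B^{op})}(e\X, \X) \cong Ae$ for each primitive idempotent $e$ of $A$. The decomposition $\X \cong \bigoplus_e e\X$ in $D^b(B^{op})$ is respected by the left $A$-action on $\End$ via post-composition, so summing gives $\End_{D^b(B^{op})}(\X) \cong \bigoplus_e Ae = A$ as left $A$-modules. A dimension-and-injectivity argument exactly parallel to the proof of Lemma \mref{bi}, applied to $l_{\X}$ in place of $r_{\X}$, then promotes this module-level isomorphism to the conclusion that $l_{\X}$ itself is an algebra isomorphism, yielding the \ddcp.

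The main obstacle will be the careful bookkeeping of the left $A$-module structure on $\End_{D^b(B^{op})}(\X)$ and the verification that the identifications constructed are genuinely induced by $l_{\X}$. In the forward direction, the indecomposability of $e\X$ in $D^b(B^{op})$ crucially requires that $l_{\X}$ be an algebra (not merely module) isomorphism so that primitivity of $e$ transports. In the converse, promoting a module-theoretic isomorphism $A \cong \End_{D^b(B^{op})}(\X)$ to the statement that $l_{\X}$ itself is an isomorphism is not immediate, and requires the analog of Lemma \mref{bi} since no direct compatibility between the constructed isomorphism and $l_{\X}$ is available.
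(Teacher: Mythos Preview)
Your proposal is correct and follows essentially the same route as the paper: both directions pivot on Lemma~\mref{app} to identify $H^{-(i+1)}(\cone(g))$ with $\Hom_{D^b(B^{op})}(e\X,\X)$, use hereditariness of $B$ to concentrate $e\X$ in a single cohomological degree, and finish via (the dual of) Lemma~\mref{bi}. The only cosmetic differences are that the paper phrases the forward indecomposability via (the dual of) Lemma~\mref{direct} and writes $eA\OT_A\X$ where you write $e\X$; also note that $\End_{D^b(B^{op})}(\X)\cdot l_{\X}(e)$ is a \emph{left} ideal, not a right ideal, though this does not affect your argument.
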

\begin{proof}
Suppose that $\X$ has \ddcp. Then for any indecomposable $A$-module $Ae$ with $e$ an idempotent of $A$, by Lemma \mref{direct}, $eA\OT_A\X$ is an indecomposable direct summand of $\X$ in $D^b(B^{op})$. Since $B$ is hereditary, there exists a unique $i\in \mathbb{Z}$ such that $eA\OT_A\X\cong H^{-i}(eA\OT_A\X)[i]$ in $D^b(B^{op})$. Then $\Hom_{D^b(A)}(Ae[i], \X)\cong H^{-i}\RHom_A(Ae, \X)\cong H^{-i}(eA\OT_A\X)\cong eA\OT_A\X[-i]\neq 0$ in $D^b(B^{op})$. Consider the $A$-complex $Ae[i]$, by Lemma \mref{app}, there is a minimal left $\add \X$-approximation sequence
$$Ae[i]\rightarrow \X_0\stackrel{g}\rightarrow \X_1$$
 of $Ae[i]$ such that $\cone(g)\cong \RHom_{B^{op}}(\Hom_{D^b(A)}(Ae[i], \X), \X)[1]$ in $D^b(A)$. Then
 \begin{equation*}
 \begin{aligned}
H^{-(i+1)}(\cone(g))&\cong H^{-i}(\RHom_{B^{op}}(\Hom_{D^b(A)}(Ae[i], \X), \X))\cong H^{-i}(\RHom_{B^{op}}(eA\OT_A\X[-i], \X))\\
&\cong H^0(\RHom_{A^{op}}(eA, \RHom_{B^{op}}(\X, \X))\cong H^0(\RHom_{B^{op}}(\X, \X)\OT_AAe)\\
&\cong H^0(\RHom_{B^{op}}(\X, \X))\otimes_AAe\cong \Hom_{D^b(B^{op})}(\X, \X)\otimes_AAe\cong Ae
\end{aligned}
\end{equation*}
as $A$-modules.

Conversely, let $A\cong \oplus_jAe_j$ be a decomposition of $_AA$ into indecomposables $Ae_j$ with $e_j$ idempotents of $A$. Assume for any indecomposable projective $A$-module $Ae_j$, there is a unique $n(j)\in \mathbb{Z}$ such that $\Hom_{D^b(A)}(Ae[n(j)], \X)\neq 0$. Then, since $B$ is hereditary,
 \begin{equation*}
 \begin{aligned}
\RHom_A(Ae_j, \X)\cong \oplus_{k\in\mathbb{Z}}H^k(\RHom_A(Ae_j, \X))[-k]&\cong \oplus_{k\in\mathbb{Z}}\Hom_{D^b(A)}(Ae_j[-k], \X)[-k]\\
&\cong \Hom_{D^b(A)}(Ae_j[n(j)], \X)[n(j)]
\end{aligned}
\end{equation*}
in $D^b(B^{op})$. So $\X\cong \RHom_A(A, \X)\cong \RHom_A(\oplus_jAe_j, \X)\cong \oplus_j \Hom_{D^b(A)}(Ae_j[n(j)], \X)[n(j)]$ in $D^b(B^{op})$. By hypothesis, there exists a minimal left $\add \X$-approximation sequence
$$Ae_j[n(j)]\rightarrow \X_0\stackrel{g}\rightarrow \X_1$$
of $Ae[n(j)]$ in $D^b(A)$ such that $Ae_j\cong H^{-(n(j)+1)}(\cone(g))$ as $A$-modules. Combining with Lemma \mref{app},
 \begin{equation*}
 \begin{aligned}
A&\cong \oplus_jAe_j\cong \oplus_j H^{-n(j)}(\RHom_{B^{op}}(\Hom_{D^b(A)}(Ae_j[n(j)], \X), \X))\\
&\cong\Hom_{D^b(B^{op})}(\oplus_j \Hom_{D^b(A)}(Ae_j[n(j)], \X)[n(j)], \X)\\
&\cong \Hom_{D^b(B^{op})}(\X, \X)
\end{aligned}
\end{equation*}
as $A$-modules. By Lemma \mref{bi}, $l_{\X}$ is an isomorphism. So $\X$ has \ddcp.
\end{proof}

\begin{remark}
We point out that, when the $A$-$B$-complex $\X$ is concentrated as a bimodule, the characterization Theorem \mref{charact} coincides with the characterization Proposition \mref{AS} by Auslander and Solberg.
\end{remark}

Now, we are going to characterize two-sided tilting complexes.
Before we give the first lemma, let us recall the generating subcategories of triangulated categories. Let $A$ be an algebra and  $X$ be an object in $K^b(A)$. Denote by $\thick(X)$ the smallest triangulated full subcategory of $K^b(A)$ which contains $\add X$ and is closed under taking direct summand.
Note that if $X$ is an object of $K^b(\proj A)$, then $X$ generates $K^b(\proj A)$ if and only if $_AA\in \thick(X)$.

The following lemma is implicitly in Rickard's Morita theory for derived categories.

\begin{lemma}\mlabel{faith}
Let $A$ and $B$ be algebras. Let $\X$ be a bounded $A$-$B$-complex. If $\RHom_{B^{op}}(\X, \X)\cong A$ in $D^b(A)$, then $$F:=\RHom_A(-, \X): K^b(\proj A)\leftrightarrow \thick(\X_B): \RHom_{B^{op}}(-, \X)=:G$$
are mutually inverse triangle dualities.
\end{lemma}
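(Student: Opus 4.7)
The plan is to establish the duality via the canonical evaluation natural transformations together with a thick-subcategory generation argument. Once $\eta\colon \id\to GF$ and $\epsilon\colon \id \to FG$ are shown to be isomorphisms on the generators $A\in K^b(\proj A)$ and $\X\in\thick(\X_B)$ respectively, the fact that the locus on which a natural transformation between triangle functors is invertible is thick will extend the conclusion to the whole categories.

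First I would check that $F$ and $G$ land in the claimed targets. Since $F(A)=\RHom_A(A,\X)\cong\X$ and $F$ is an additive contravariant triangle functor, $F$ sends $\add A$ into $\add \X$, hence $\thick(A)=K^b(\proj A)$ into $\thick(\X_B)$. Dually, $G(\X)=\RHom_{B^{op}}(\X,\X)\cong A$ by hypothesis, so $G$ sends $\add \X$ into $\add A=\proj A$ and therefore $\thick(\X_B)$ into $\thick(A)=K^b(\proj A)$. Next I would introduce the canonical evaluation transformation
$$\eta_P\colon P \longrightarrow \RHom_{B^{op}}(\RHom_A(P,\X),\X)=GF(P).$$
At $P=A$, $\eta_A$ is the canonical ring map $A\to\RHom_{B^{op}}(\X,\X)$; by the hypothesis combined with an argument along the lines of Lemma \mref{bi}, any $D^b(A)$-isomorphism between these two objects forces the canonical map itself to be an isomorphism, so $\eta_A$ is invertible. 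Since $\eta$ is a morphism of triangle functors on $K^b(\proj A)$, the full subcategory on which $\eta$ is an isomorphism is thick and contains $A$, hence equals $\thick(A)=K^b(\proj A)$. This yields $GF\cong\id_{K^b(\proj A)}$.

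The opposite composition is handled symmetrically. The evaluation $\epsilon_Y\colon Y\to FG(Y)=\RHom_A(\RHom_{B^{op}}(Y,\X),\X)$ at $Y=\X$ factors as
$$\X \longrightarrow \RHom_A(\RHom_{B^{op}}(\X,\X),\X)\cong\RHom_A(A,\X)\cong\X,$$
and a direct trace of the evaluation map shows this composite is the identity, so $\epsilon_\X$ is an iso; the same thick-subcategory argument then gives $FG\cong\id_{\thick(\X_B)}$. The main obstacle I anticipate is the careful identification of the abstract isomorphism $\RHom_{B^{op}}(\X,\X)\cong A$ furnished by the hypothesis with the canonical left-multiplication map $l_\X$, so that $\eta_A$ and $\epsilon_\X$ are genuinely recognizable as the isomorphisms we need. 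Once this bookkeeping between the two natural $A$-actions on $\RHom_{B^{op}}(\X,\X)$ is resolved, the remainder of the argument is purely formal.
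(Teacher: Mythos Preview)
Your proposal is correct and follows essentially the same strategy as the paper: both arguments reduce to checking the duality on the generators $A$ and $\X$ and then propagate to the full categories by a thick-closure/induction principle. The only cosmetic difference is packaging --- the paper unrolls the thick-subcategory argument as an explicit induction on the width of a complex in $K^b(\proj A)$ (respectively on the distance to $\add\X$ in $\thick(\X_B)$), whereas you invoke the standard fact that the locus where a morphism of triangle functors is invertible is thick. Your version is in fact slightly more careful on one point the paper leaves implicit: the identification of the abstract isomorphism $\RHom_{B^{op}}(\X,\X)\cong A$ with the canonical evaluation map $\eta_A$, which you correctly handle via (the dual of) Lemma~\ref{bi} after observing that the hypothesis forces $\RHom_{B^{op}}(\X,\X)$ to be concentrated in degree~$0$.
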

\begin{proof}
First, we prove that $F(\Y)$ is an object of $\thick(\X_B)$, for any object $\Y$ of $K^b(\proj A)$. We prove this by induction on the width $t(\Y)$ of $\Y$. When $t(\Y)=1$, this means that $\Y$, up to shift, is isomorphic to a projective $A$-module, then $F(\Y)\in \add \X$, hence $F(\Y)\in\thick(\X_B)$. Assume the assertion holds for $t(\Y)<n$. Let $t(\Y)=n$. We may
assume without loss of generality that the non-zero components of $\Y$ are exactly in degrees between $1$ and $n$. Let $\Y_{<n}$ be the brutal truncation of $\Y$ at degree $n$. Then there is a distinguished triangle $\Y_{<n}[-1]\rightarrow Y^{n}[-n]\rightarrow \Y\rightarrow \Y_{<n}$  in $K^b(\proj A)$. Applying $F$ to it we have a distinguished triangle $F(\Y_{<n})\rightarrow F(\Y)\rightarrow F(Y^{n}[-n])\rightarrow F(\Y_{<n})[1]$ in $K^b(B^{op})$. By the induction hypothesis, $F(\Y_{<n})$ and $F(Y^{n}[-n])$ are objects of $\thick(\X_B)$. Since $\thick(\X_B)$ is a triangulated subcategory of $K^b(B^{op})$, then $F(\Y)\in\thick(\X_B)$.

Second, $G(\Y)$ is an object of $K^b(\proj A)$, for any object $\Y$ of $\thick(\X_B)$. The proof is similar to the previous, just by induction on the \textit{distance} of $\Y$ to $\add \X$ (the notation is introduced in \cite[pp. 71]{H}).

Since the restrictions $F_{\proj A}: \proj A\leftrightarrow \add \X_B: {_{\add \X_B}G}$ are mutually inverse dualities. To prove that $GF$ is an identity on $K^b(\proj A)$ and $FG$ is an identity on $\thick(\X_B)$, it is also given respectively by induction on the width of complex in $K^b(\proj A)$ and by induction on the distance of complex in $\thick(\X_B)$ to $\add \X$.
\end{proof}

\begin{lemma}\mlabel{orth}
Let $A$ be a hereditary algebra. Let $\X$ be a bounded and self-orthogonal $A$-complex. Then $\emph{gl}(\emph{End}_{D^b(A)}(\X))<\infty$.
\end{lemma}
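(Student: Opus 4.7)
The plan is to reduce the computation of $\gl(B)$, where $B := \End_{D^b(A)}(\X)$, to an induction whose inductive step uses triangular matrix algebras and whose base case is the finiteness of $\gl(\End_A(M))$ for a self-orthogonal module $M$ over a hereditary algebra.

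First, since $A$ is hereditary one has $\X \cong \bigoplus_i M_i[-i]$ in $D^b(A)$, where $M_i := H^i(\X)$, with only finitely many $M_i$ nonzero. Using that over a hereditary algebra $\Hom_{D^b(A)}(L, N[k])$ equals $\Hom_A(L,N)$ for $k=0$, $\Ext^1_A(L,N)$ for $k=1$, and vanishes otherwise, the self-orthogonality condition $\Hom_{D^b(A)}(\X, \X[k]) = 0$ for $k \neq 0$ translates into
$$\Hom_A(M_i, M_j) = 0 \text{ for } j \neq i, \qquad \Ext^1_A(M_i, M_j) = 0 \text{ unless } j = i-1.$$
In particular each $M_i$ is itself self-orthogonal.

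Plugging these vanishings into $B = \bigoplus_{i,j} \Hom_{D^b(A)}(M_j[-j], M_i[-i])$ shows $B = (\bigoplus_i \End_A(M_i)) \oplus I$, where $I := \bigoplus_i \Ext^1_A(M_i, M_{i-1})$ is a two-sided ideal. A composition of two elements of $I$ represents a morphism lying in $\Ext^2_A(M_i, M_{i-2})$, which vanishes by hereditariness, so $I^2 = 0$. Ordering the nonzero indices, $B$ takes the block upper-triangular shape
$$B \;\cong\; \begin{pmatrix} R & X \\ 0 & S \end{pmatrix}$$
with $S = \End_A(M_n)$ (the block at the highest index), $R$ the analogous algebra for the truncated complex $\bigoplus_{i<n} M_i[-i]$ (still bounded and self-orthogonal, as a direct summand of $\X$), and $X = \Ext^1_A(M_n, M_{n-1})$ viewed as an $R$-$S$-bimodule.

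An induction on the number of nonzero $M_i$ now completes the proof. The inductive step rests on the classical fact that a block upper triangular matrix algebra has finite global dimension whenever its diagonal blocks do (the off-diagonal bimodule automatically has finite projective dimension on both sides). The base case reduces to showing $\gl(\End_A(M)) < \infty$ for a self-orthogonal $M$ over a hereditary $A$, which will be the main obstacle; one standard way is to Bongartz complete $M$ to a tilting module $T = M \oplus N$, apply the Brenner--Butler theorem to get $\gl(\End_A(T)) \leq 2$, and then conclude that the corner algebra $\End_A(M) = e \End_A(T) e$ inherits finite global dimension.
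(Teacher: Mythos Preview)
Your inductive scheme is essentially the paper's: both strip off the top‐degree summand and reduce to the endomorphism ring of the truncated complex together with $\End_A(M_n)$. The paper phrases the inductive step in the language of idempotent ideals (Auslander--Platzeck--Todorov): the ideal $\mathfrak U\subset B$ of maps factoring through $M_n[-n]$ equals $Be_n$ (hence is projective) precisely because your computation $I^2=0$ forces $e_nB(1-e_n)=0$, and then $\gl B<\infty \Leftrightarrow \gl(B/\mathfrak U)<\infty$ and $\gl(e_nBe_n)<\infty$. Your triangular‐matrix formulation is the same fact in different clothing, so the inductive step is fine.

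The gap is in the base case. From $\gl(\End_A(T))\le 2$ you cannot simply ``conclude that the corner algebra $\End_A(M)=e\,\End_A(T)\,e$ inherits finite global dimension'': idempotent truncation does \emph{not} preserve finiteness of global dimension in general, and the Bongartz complement $N$ produces no one‐sided $\Hom$‐vanishing between $M$ and $N$ (both $\Hom_A(M,N)$ and $\Hom_A(N,M)$ are typically nonzero), so $\End_A(T)$ is not triangular for this decomposition and your own inductive‐step tool does not apply. The assertion $\gl(\End_A(M))<\infty$ for partial tilting $M$ over hereditary $A$ is correct, but it is exactly the nontrivial input here; the paper does not prove it either and simply cites Happel [Corollary~III.6.5, Proposition~III.3.4]. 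If you want a self‐contained route, take as the true base case a single indecomposable summand: by the Happel--Ringel lemma an indecomposable $M$ over hereditary $A$ with $\Ext^1_A(M,M)=0$ is a brick, so $\End_A(M)$ is a division ring; then build up via perpendicular‐category reduction rather than via Bongartz completion plus corners.
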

\begin{proof}
We prove this lemma by induction on the width $t(\X)$ of the complex $\X$. Let $B=\End_{D^b(A)}(\X)$.
If $t(\X)=1$, then $\X$, up to shift, is isomorphic to a partial tilting $A$-module. By \cite[Corollary III.6.5 and Proposition III.3.4]{H}, $\gl(B)<\infty$.
Assume the assertion holds for $t(\X)< n$. Let $t(\X)=n$.
We may assume without loss of generality that $\X\cong X_1[1]\oplus X_2[2]\oplus \cdots \oplus X_{n}[n]$, where $X_i$ are $A$-modules and $X_1\neq 0\neq X_n$. Let $\Y=X_1[1]\oplus X_2[2]\oplus \cdots \oplus X_{n-1}[n-1]$. Let $\frak{U}$ be the set of all morphisms in $B$ factoring through $X_n[n]$. Note that $\frak{U}$ is an idempotent ideal of $B$ and it is projective as $B$-module, moreover, $B/\frak{U}\cong\End_{D^b(A)}(\Y)$. By \cite[Corollary 5.6]{A}, $\gl(B)<\infty$ if and only if $\gl(\End_{D^b(A)}(\Y))<\infty$ and $\gl(\End_{D^b(A)}(X_n[n]))<\infty$. Note that $\Y$ and $X_n[n]$ are self-orthogonal and the widths of them are less than $n$, by the induction hypothesis, we get the assertion.
\end{proof}

\begin{theorem}\mlabel{t}
Let $A$ and $B$ be two algebras. Let $\X$ be a bounded $A$-$B$-complex. Assume $B$ is hereditary and $r_{\X}: B\rightarrow\End_{D^b(A)}(\X)^{op}$ is an isomorphism. Then $\X$ is two-sided tilting if and only if for any indecomposable projective $A$-module $Ae$,
\begin{enumerate}
\item
there is a unique $i\in \mathbb{Z}$ such that $\Hom_{D^b(A)}(Ae[i], \X)\neq 0$;
\item
there is a minimal left $\add \X$-approximation sequence
$$Ae[i]\rightarrow \X_0\rightarrow \X_1$$
of $Ae[i]$ which forms a distinguished triangle in $D^b(A)$.
\end{enumerate}
\end{theorem}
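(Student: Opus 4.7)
The plan is to prove both directions by reducing to Theorem \mref{charact} and then exploiting the extra triangle structure via Lemma \mref{app}.

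For necessity, the two-sided tilting property yields \ddcp by Remark \mref{bimodule}(2), so Theorem \mref{charact} supplies condition (1) together with a minimal left $\add \X$-approximation sequence $Ae[i] \to \X_0 \stackrel{g}\to \X_1$. By Lemma \mref{app}, $\cone(g) \cong \RHom_{B^{op}}(M, \X)[1]$, where $M := \Hom_{D^b(A)}(Ae[i], \X)$. Since $\X$ is two-sided tilting one has $\RHom_{B^{op}}(\X, \X) \cong A$ in $D^b(A)$, and condition (1) combined with $B^{op}$ being hereditary identifies $M \cong e\X[-i]$ in $D^b(B^{op})$. Adjunction and the projectivity of $eA$ over $A^{op}$ then give
\begin{equation*}
\RHom_{B^{op}}(M, \X) \cong \RHom_A(eA, \RHom_{B^{op}}(\X, \X))[i] \cong \RHom_A(eA, A)[i] \cong Ae[i],
\end{equation*}
so $\cone(g) \cong Ae[i+1]$. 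Uniqueness of minimal left $\add \X$-approximations up to isomorphism then forces the approximation sequence to fit into the distinguished triangle $Ae[i] \to \X_0 \to \X_1 \to Ae[i+1]$.

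For sufficiency, condition (2) implies $H^{-(i+1)}(\cone(g)) \cong Ae$, so Theorem \mref{charact} already gives that $\X$ has \ddcp. Reversing the computation above, the triangle hypothesis combined with Lemma \mref{app} yields $\RHom_{B^{op}}(M_j, \X) \cong Ae_j[i_j]$ for each indecomposable projective $Ae_j$, and summing over $j$ using the decomposition $\X \cong \bigoplus_j M_j[i_j]$ in $D^b(B^{op})$ (a consequence of condition (1) and $B^{op}$ hereditary) produces $\RHom_{B^{op}}(\X, \X) \cong A$ in $D^b(A)$. Consequently $\X_B$ is a bounded self-orthogonal $B^{op}$-complex with endomorphism algebra $A$, so Lemma \mref{orth} forces $\gl(A) < \infty$ and hence $\X_A \in K^b(\proj A)$. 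The distinguished triangles in condition (2) place each $Ae_j$ inside $\thick(\X)$, so $A \in \thick(\X)$ and $\add \X$ generates $K^b(\proj A)$.

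The main remaining step --- and the principal obstacle --- is to verify that $\X_A$ is self-orthogonal. Since $\RHom_{B^{op}}(\X, \X) \cong A$, Lemma \mref{faith} provides mutually inverse triangle dualities $F = \RHom_A(-, \X) : K^b(\proj A) \leftrightarrow \thick(\X_B) : G = \RHom_{B^{op}}(-, \X)$. Applying $F$ to $\X_A \in K^b(\proj A)$ puts $F(\X_A) = \RHom_A(\X, \X)$ in $\thick(\X_B)$; since $B^{op}$ is hereditary, $\RHom_A(\X, \X) \cong \bigoplus_n \Ext^n_A(\X, \X)[-n]$ in $D^b(B^{op})$. Noting $G(B^{op}) = \RHom_{B^{op}}(B^{op}, \X) = \X_A$ together with $\Ext^0_A(\X, \X) \cong B^{op}$ (via $r_\X$), applying $G$ to both sides of the decomposition yields
\begin{equation*}
\X_A \cong G(F(\X_A)) \cong \X_A \oplus \bigoplus_{n \neq 0} G(\Ext^n_A(\X, \X))[n]
\end{equation*}
in the Krull-Schmidt category $K^b(\proj A)$. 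Hence $G(\Ext^n_A(\X, \X)) = 0$ for $n \neq 0$, and faithfulness of $G$ gives $\Ext^n_A(\X, \X) = 0$ for $n \neq 0$. Therefore $\X_A$ is a tilting complex in $K^b(\proj A)$ with $\End_{D^b(A)}(\X)^{op} \cong B$, and Proposition \mref{two-sided} concludes that $\X$ is two-sided tilting.
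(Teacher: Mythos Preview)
Your proof is correct, and the necessity direction matches the paper's argument closely (your appeal to ``uniqueness of minimal approximations'' is a compressed version of what the paper spells out: one checks that the map $f'$ coming from the triangle $Ae[i]\stackrel{f'}\to \X_0\stackrel{g}\to \X_1\to Ae[i+1]$ is itself a minimal left $\add\X$-approximation by applying $\Hom_{D^b(A)}(-,\X)$ and using condition~(1)). For sufficiency, however, you take a genuinely different route. The paper proves that $\X$ is a tilting complex in $D^b(B^{op})$: self-orthogonality is immediate from $\RHom_{B^{op}}(\X,\X)\cong A$, membership in $K^b(\proj B^{op})$ follows from $\gl(A\otimes_K B^{op})<\infty$, and generation comes from observing that $B_B$ is a direct summand of $\RHom_A(\X,\X)\in\thick(\X_B)$ (using the hereditary splitting). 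The paper then invokes (the left--right dual of) Proposition~\mref{two-sided}. You instead prove $\X$ is a tilting complex in $D^b(A)$: generation is read off directly from the triangles in condition~(2), and self-orthogonality is obtained by your Krull--Schmidt cancellation argument, applying $G$ to the hereditary decomposition of $F(\X_A)$. Your approach has the advantage of invoking Proposition~\mref{two-sided} exactly as stated, while the paper's route makes self-orthogonality free and shifts the work to the generation step. A small notational quibble: write $G(B_B)$ rather than $G(B^{op})$, and note that for the cancellation you need $\End_{D^b(A)}(\X)\cong B$ as a right $B$-module (Lemma~\mref{bi}), not as the algebra $B^{op}$.
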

\begin{proof}
Suppose $\X$ is two-sided tilting. Then $\X$ has \ddcp. For any indecomposable projective $A$-module $Ae$ with $e$ an idempotent of $A$, the condition $(a)$ holds by Theorem \mref{charact}. By Lemma \mref{app}, there is a minimal left $\add \X$-approximation sequence $Ae[i]\stackrel{f}\rightarrow \X_0\stackrel{g}\rightarrow \X_1$ of $Ae[i]$ such that $\cone(g)\cong \RHom_{B^{op}}(\Hom_{D^b(A)}(Ae[i], \X), \X)[1]$ in $D^b(A)$. Since $\X$ is two-sided tilting and $\Hom_{D^b(A)}(Ae[i], \X)\cong eA\OT_A\X[-i]$ in $D^b(B^{op})$, we have $\cone(g)\cong Ae[i+1]$ in $D^b(A)$. This yields a distinguished triangle $Ae[i]\stackrel{f'}\rightarrow \X_0\stackrel{g}\rightarrow \X_1\rightarrow Ae[i+1]$ in $D^b(A)$. Applying $\Hom_{D^b(A)}(-, \X)$ to it, we get an exact sequence $0\rightarrow \Hom_{D^b(A)}(\X_1, \X)\rightarrow \Hom_{D^b(A)}(\X_0, \X)\rightarrow \Hom_{D^b(A)}(Ae[i], \X)\rightarrow 0$ since $\Hom_{D^b(A)}(Ae[i+1], \X)=0$ by $(a)$. So $Ae[i]\stackrel{f'}\rightarrow \X_0\stackrel{g}\rightarrow \X_1$ is a minimal left $\add \X$-approximation sequence of $Ae[i]$ in $D^b(A)$, where $f'$ is left minimal since $f$ is left minimal.

Conversely, suppose that for any indecomposable projective $A$-module $Ae$ with $e$ an idempotent of $A$,
there is a unique $i\in \mathbb{Z}$ such that $\Hom_{D^b(A)}(Ae[i], \X)\neq 0$ and
there is a minimal left $\add \X$-approximation sequence
$$Ae[i]\rightarrow \X_0\stackrel{g}\rightarrow \X_1$$
of $Ae[i]$ which forms a distinguished triangle in $D^b(A)$. Note that $\cone(g)\cong Ae[i+1]$ in $D^b(A)$, using Theorem \mref{charact}, $\X$ has \ddcp. Then by Proposition \mref{two-sided}, we only need to prove that $\X$ is isomorphic to a tilting complex in $D^b(B^{op})$.

Using Lemma \mref{app}, $Ae[i+1]\cong \cone(g)\cong \RHom_{B^{op}}(\Hom_{D^b(A)}(Ae[i], \X), \X)[1]$ in $D^b(A)$. Note that $\Hom_{D^b(A)}(Ae[i], \X)\cong eA\OT_A\X[-i]$ in $D^b(B^{op})$, then $\RHom_{B^{op}}(eA\OT_A\X, \X)\cong Ae$ in $D^b(A)$. Hence $\RHom_{B^{op}}(\X, \X)\cong A$ in $D^b(A)$ and then $\X$ is self-orthogonal in $D^b(B^{op})$. By Lemma \mref{orth}, $\gl(A)<\infty$. Now by \cite[Theorem 16]{A1}, $\gl(A\otimes_KB^{op})=\gl(A)+\gl(B)<\infty$.  Hence we may assume $\X$ is a bounded complex of projective $A\otimes_KB^{op}$-modules.  Naturally, $\X$ is an object in $K^b(\proj A)$ and also an object in $K^b(\proj B^{op})$. Consider the triangle functor
$$F:=\RHom_A(-, \X): K^b(\proj A)\rightarrow K^b(B^{op}).$$
 Since $r_{\X}: B\rightarrow\End_{D^b(A)}(\X)$ is a $B^{op}$-module isomorphism by Lemma \mref{bi} and $\End_{D^b(A)}(\X)$ is a direct summand of $F(\X)$ as $B$ is hereditary, we have $B_B$ is isomorphic to a direct summand of $F(\X)$. Moreover, by Lemma \mref{faith}, $\im(F)=\thick(\X)$ which is closed under taking direct summand. Then $B_B\in \thick(\X)$. This means that $\X$ generates $K^b(\proj B^{op})$. We are done.
\end{proof}

\begin{remark}
Without the assumption that $B$ is hereditary, Theorem \mref{t} may fail.

Let $A$ be an algebra. Let $\X$ be an $A$-$B$-bimodule $X$ such that $_AX$ is a non-projective generator of $A$ (i.e., $\add A\subsetneq\add X$) and $B=\End_{A}(X)^{op}$. Then $A\stackrel{=}\rightarrow A\rightarrow 0$ is a minimal left $\add \X$-approximation of $A$. So $\X$ satisfies the two conditions in Theorem \mref{t}. But, as $_AX$ is a non-projective generator, $|K_0(A)|\neq |K_0(B)|$, $X$ is not tilting.
\end{remark}

As a directly consequence of Theorem \mref{t}, we have

\begin{coro}\mlabel{module}
Let $A$ be an algebra and $M$ an $A$-module. Assume $\End_A(M)$ is hereditary. Then $_AM_{\End_A(M)}$ is tilting if and only if there is an exact sequence
$$0\rightarrow A\stackrel{f}\rightarrow M_0\rightarrow M_1\rightarrow 0$$
such that $M_0, M_1\in \add M$ and $f:A\rightarrow M_0$ is a minimal left $\add M$-approximation.
\end{coro}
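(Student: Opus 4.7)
The plan is to derive the corollary from Theorem \mref{t} applied to the bimodule ${}_AM_B$ viewed as an $A$-$B$-complex concentrated in degree zero, where $B:=\End_A(M)^{op}$. With this choice $r_M:B\to\End_{D^b(A)}(M)^{op}$ is the identity and $B$ is hereditary by hypothesis, so Theorem \mref{t} is applicable. The key auxiliary fact I shall invoke throughout is that, since $Ae$ is projective and $M$ lives in degree $0$, $\Hom_{D^b(A)}(Ae[i], M)=0$ for $i\neq 0$ and equals $\Hom_A(Ae, M)$ for $i=0$.

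For the forward direction, assume ${}_AM_B$ is tilting. By the discussion recalled in Section \mref{basic}, $M$ is then a two-sided tilting $A$-$B$-complex, and Theorem \mref{t} supplies, for each indecomposable projective $Ae$, a unique $i\in\mathbb{Z}$ with $\Hom_{D^b(A)}(Ae[i], M)\neq 0$ together with a minimal left $\add M$-approximation sequence $Ae[i]\to M_0^e\to M_1^e$ forming a distinguished triangle in $D^b(A)$. The vanishing observation forces $i=0$, and the triangle restricts via cohomology to a short exact sequence $0\to Ae\to M_0^e\to M_1^e\to 0$ in $A$-mod with $M_0^e,M_1^e\in\add M$. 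Summing over a complete decomposition $A=\bigoplus_j Ae_j$ into indecomposables yields the required sequence $0\to A\to M_0\to M_1\to 0$; a standard Schur-complement argument shows that a direct sum of minimal left $\add M$-approximations is itself a minimal left $\add M$-approximation, so $f:A\to M_0$ is minimal.

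For the converse, start with the given exact sequence and decompose $A=\bigoplus_j Ae_j$. Uniqueness of minimal approximations (applied to the direct sum of minimal left $\add M$-approximations of the $Ae_j$) lets one write $f=\bigoplus_j f_j$ with each $f_j:Ae_j\to M_0^j$ a minimal left $\add M$-approximation, whence the sequence splits as a direct sum of short exact sequences $0\to Ae_j\to M_0^j\to M_1^j\to 0$. Each of these yields a distinguished triangle $Ae_j\to M_0^j\to M_1^j\to Ae_j[1]$ in $D^b(A)$. Applying $\Hom_{D^b(A)}(-,M')$ for $M'\in\add M$ and using the vanishings $\Hom_{D^b(A)}(Ae_j[\pm 1], M')=0$ exhibits the triangle as a left $\add M$-approximation sequence. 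The map $f_j$ is left minimal by construction; the surjection $g:M_0^j\to M_1^j$ is automatically left minimal, since any $h\in\End_A(M_1^j)$ with $hg=g$ equals the identity on $\im g=M_1^j$. Moreover $\Hom_A(Ae_j, M)\neq 0$ because $f_j$ embeds $Ae_j$ into $M_0^j\in\add M$, securing condition (1) of Theorem \mref{t}. The theorem therefore declares $M$ a two-sided tilting $A$-$B$-complex, and since its cohomology is concentrated in degree zero, the result of \cite{CPS} recalled in Section \mref{basic} identifies it with a tilting $A$-$B$-bimodule.

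The only real obstacle is the bookkeeping in the converse: translating back and forth between short exact sequences in $A$-mod and distinguished triangles in $D^b(A)$, decomposing the given minimal approximation $f$ compatibly with the decomposition $A=\bigoplus_j Ae_j$, and verifying both minimality clauses in the hypothesis of Theorem \mref{t}. Once these are in place, projectivity of $Ae_j$ and the resulting vanishing of higher Hom spaces dispatch the approximation conditions automatically.
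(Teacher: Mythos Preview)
Your proof is correct and follows the same route the paper intends: the paper states the corollary as a direct consequence of Theorem~\ref{t} without further detail, and you have filled in precisely the translation between the per-$Ae_j$ triangles of Theorem~\ref{t} and the single short exact sequence for $A$, together with the routine checks (additivity of minimal left approximations, left minimality of the epimorphism $g_j$, and nonvanishing of $\Hom_A(Ae_j,M)$). There is nothing to correct.
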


An interesting new approach to prove the following well-known property can be given using our result.
\begin{coro}
Let $B$ be a connected hereditary algebra.  Let $A$ be another algebra such that $A$ and $B$ are derived equivalent. Then $\gl(A)\leq |K_0(B)|+1$.
\end{coro}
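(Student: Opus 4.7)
The plan is to realise the derived equivalence by a two-sided tilting complex $\X\in D^b(A\otimes_KB^{op})$ (Rickard's theorem), to use the induced equivalence $F:=\RHom_A(\X,-):D^b(A)\to D^b(B)$ to transfer each indecomposable projective $A$-module to an indecomposable object of $D^b(B)$, and finally to invoke Happel's standard estimate relating the width of a tilting complex to the global dimension of its endomorphism algebra. Because $B$ is hereditary, every indecomposable of $D^b(B)$ is a shift of an indecomposable module; hence for each primitive idempotent $e_j$ of $A$ we have $F(Ae_j)\cong N_j[s_j]$ for some indecomposable $B$-module $N_j$ and integer $s_j$. Consequently $T:=F(A)=\bigoplus_j N_j[s_j]^{m_j}\in K^b(\proj B)$ is a tilting complex with $\End_B(T)^{op}\cong A$. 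The shift $s_j$ is related to the unique integer $i_j$ from Theorem~\mref{t}\,(1) by $i_j\in\{-s_j,-s_j-1\}$, since $\Hom_{D^b(A)}(Ae_j[i_j],\X)\cong\Ext^{-s_j-i_j}_B(N_j,B)$ vanishes outside degrees $0$ and $1$.

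The crux is to bound the spread $s_{\max}-s_{\min}$ (over primitive idempotents of $A$) in terms of $n:=|K_0(A)|=|K_0(B)|$. The main obstacle is to harness the connectedness of $A$, which is inherited from the connectedness of $B$ via the derived invariance of the centre, $Z(A)\cong Z(B)$. Assume for contradiction $s_{\max}-s_{\min}\geq n$. Since there are at most $n$ distinct values of $s_j$, pigeonhole yields an integer $r\in[s_{\min},s_{\max}]$ not attained by any $s_j$, so the orthogonal idempotents $\varepsilon:=\sum_{s_j<r}e_j$ and $\eta:=\sum_{s_j>r}e_j$ are both nonzero with $\varepsilon+\eta=1_A$. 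For any primitive $e_j,e_k$ with $s_j<r<s_k$ we have $s_k-s_j\geq 2$, and
\begin{equation*}
e_jAe_k\cong\Hom_{D^b(A)}(Ae_j,Ae_k)\cong\Hom_{D^b(B)}(N_j[s_j],N_k[s_k])\cong\Ext^{s_k-s_j}_B(N_j,N_k)=0
\end{equation*}
because $B$ is hereditary; symmetrically $e_kAe_j\cong\Ext^{s_j-s_k}_B(N_k,N_j)=0$. Therefore $\varepsilon A\eta=\eta A\varepsilon=0$, $\varepsilon$ is a nontrivial central idempotent of $A$, and $A\cong\varepsilon A\times\eta A$ splits as a ring, contradicting the connectedness of $A$. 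Hence $s_{\max}-s_{\min}\leq n-1$.

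To conclude, each $N_j$ has projective dimension at most one over $B$, so as an object of $K^b(\proj B)$ the summand $N_j[s_j]$ is supported in at most two consecutive degrees (namely $\{-s_j-1,-s_j\}$, or just $\{-s_j\}$ if $N_j$ is projective). Thus $T$ has width at most $s_{\max}-s_{\min}+2\leq n+1$. Happel's bound then gives $\gl A=\gl\End_B(T)^{op}\leq\gl B+(\mathrm{width}(T)-1)\leq 1+n=n+1=|K_0(B)|+1$.
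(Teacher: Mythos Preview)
Your proof is correct and follows essentially the same route as the paper's: both bound the width of a tilting complex over $B$ by showing (via connectedness of $A$ and the hereditariness of $B$) that its indecomposable summands lie in at most $|K_0(A)|$ consecutive shift degrees, and then apply the Gabriel--Roiter estimate $\gl A\leq\gl B+\mathrm{width}-1$. The only difference is cosmetic: the paper invokes its Theorem~\mref{t} to bound the number of nonzero homology degrees, whereas you argue this directly from the derived equivalence and a pigeonhole argument, making your version slightly more self-contained (your aside relating $s_j$ to the $i_j$ of Theorem~\mref{t} is correct but not actually needed).
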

\begin{proof}
Let $P$ be a tilting $B^{op}$-complex such that $\End_{D^b(B^{op})}(P)\cong A$. Then there is a two-sided tilting $A$-$B$-complex $\X$ such that $\X\cong P$ in $D^b(B^{op})$. We may assume $P$ is radical \cite[pp. 112 (a)]{HX} and let $t$ be the width of $P$. Then by \cite[Section 12.5(b)]{GR}, $\gl(A)\leq \gl(B)+t-1$. Note that $B$ is connected, by \cite[Corollary 6.7.11]{Z}, $A$ is also connected. Moreover, since $\End_{D^b(B^{op})}(\X)\cong A$ and $B$ is hereditary, all the degrees of non-zero homologies of $\X$ are continuous integers. By Theorem \mref{t}, the number of the degrees of non-zero homologies of $\X$ is at most $|K_0(A)|$. Since $B$ is hereditary, $t\leq |K_0(A)|+1$. Note that by \cite[Section 12.5(d)]{GR}, $|K_0(B)|=|K_0(A)|$. Hence $\gl(A)\leq \gl(B)+t-1\leq t\leq |K_0(B)|+1$.
\end{proof}

\subsection{Characterizations II}

In this subsection, we will further investigate $A$-$B$-complexes with the derived double centraliser property and two-sided tilting $A$-$B$-complexes where algebras $A$ and $B$ are hereditary algebras. The following lemma is crucial.

\begin{lemma}\mlabel{ddcp}
Let $A$ and $B$ be hereditary algebras and $\X$ a bounded $A$-$B$-complex. Assume $r_{\X}: B\rightarrow\End_{D^b(A)}(\X)^{op}$ is an isomorphism. Let $Ae$ be an indecomposable projective $A$-module. Assume there is a unique $i\in \mathbb{Z}$ such that $\Hom_{D^b(A)}(Ae[i], \X)\neq 0$.
\begin{enumerate}[(i)]
\item
The following conditions are equivalent:
\begin{enumerate}
\item
There is a minimal left $\add \X$-approximation sequence
 $$Ae[i]\rightarrow \X_0\stackrel{g}\rightarrow \X_1$$
of $Ae[i]$ such that $Ae\cong H^{-(i+1)}(\cone(g))$ as $A$-modules.
\item
There is an exact minimal left $\add H^{-i}(\X)$-approximation sequence
$$Ae\stackrel{f}\rightarrow X_0\rightarrow X_1$$
of $Ae$ such that $\ker f\in  \add H^{-(i+1)}(\X)$.
\end{enumerate}
\item
The following conditions are equivalent:
\begin{enumerate}
\item
There is a minimal left $\add \X$-approximation sequence
$$Ae[i]\rightarrow \X_0\rightarrow \X_1$$
of $Ae[i]$ which forms a distinguished triangle in $D^b(A)$.
\item
There is an exact minimal left $\add H^{-i}(\X)$-approximation sequence
$$Ae\stackrel{f}\rightarrow X_0\rightarrow X_1\rightarrow 0$$
of $Ae$ such that $\ker f\in  \add H^{-(i+1)}(\X)$.
\end{enumerate}
\end{enumerate}
\end{lemma}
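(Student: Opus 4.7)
The plan is to translate between the complex-level conditions (a) and the module-level conditions (b) in each part using the decomposition $\X \cong \bigoplus_{j} H^{-j}(\X)[j]$, which holds in $D^b(A)$ since $A$ is hereditary, together with long exact cohomology sequences.

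First I would perform a structural reduction on the complex side. Since $A$ is hereditary, for $A$-modules $Y, Z$ and integers $j, k$, the group $\Hom_{D^b(A)}(Y[j], Z[k])$ vanishes unless $k - j \in \{0, 1\}$, the two surviving cases being $\Hom_A(Y, Z)$ (when $k = j$) and $\Ext^1_A(Y, Z)$ (when $k = j+1$). Combined with $\Hom_{D^b(A)}(Ae[i], M[j]) = 0$ for $j \neq i$ (as $Ae$ is projective), left minimality of the approximation data forces $\X_0 = X_0[i]$ with $X_0 \in \add H^{-i}(\X)$, and $\X_1 = X_1[i] \oplus X_1'[i+1]$ with $X_1 \in \add H^{-i}(\X)$ and $X_1' \in \add H^{-(i+1)}(\X)$. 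The morphism $h : Ae[i] \to X_0[i]$ then comes from a module map $h_0 : Ae \to X_0$, and $g$ decomposes as a pair $(g_0, \epsilon)$ with $g_0 \in \Hom_A(X_0, X_1)$ and $\epsilon \in \Ext^1_A(X_0, X_1')$. A direct verification using the vanishing above shows $h$ is a (minimal) left $\add \X$-approximation iff $h_0$ is a (minimal) left $\add H^{-i}(\X)$-approximation.

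Second, applying the long exact cohomology sequence to the triangle $X_0[i] \stackrel{g}\rightarrow \X_1 \to \cone(g) \to X_0[i+1]$ yields
\[ 0 \to X_1' \to H^{-(i+1)}(\cone(g)) \to X_0 \stackrel{g_0}\rightarrow X_1 \to H^{-i}(\cone(g)) \to 0. \]
For part (i), the hypothesis $H^{-(i+1)}(\cone(g)) \cong Ae$ gives $\ker g_0 \cong Ae/X_1'$; and since $gh = 0$ (because the approximation-sequence condition forces $\Hom(\xi, \X')$ to be a complex), we have $g_0 h_0 = 0$, hence $\im h_0 \subset \ker g_0$. To upgrade this to the identification $\ker h_0 \cong X_1'$ in $\add H^{-(i+1)}(\X)$, I would invoke the octahedral axiom on the composition $Ae[i] \stackrel{h}\rightarrow X_0[i] \stackrel{g}\rightarrow \X_1$: hereditariness of $A$ gives $\cone(h) \cong \mathrm{coker}(h_0)[i] \oplus \ker(h_0)[i+1]$, and the octahedral triangle $\cone(h) \to \X_1 \oplus Ae[i+1] \to \cone(g) \to \cone(h)[1]$ (using $\cone(gh) = \X_1 \oplus Ae[i+1]$ since $gh = 0$), analyzed via its LES in the relevant degrees, identifies $\ker h_0$ with $X_1'$ and so gives (b)(i) with $f := h_0$. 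For part (ii), the additional triangle condition in (a) is equivalent to $\cone(g) \cong Ae[i+1]$, which via the LES forces $H^{-i}(\cone(g)) = \mathrm{coker}(g_0) = 0$, i.e.\ $g_0$ is surjective, matching the $\to 0$ in (b)(ii).

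For the converse (b) $\Rightarrow$ (a), I would invert the construction: given $Ae \stackrel{f}\rightarrow X_0 \to X_1$ with $\ker f = X_1' \in \add H^{-(i+1)}(\X)$, set $\X_0 := X_0[i]$ and $\X_1 := X_1[i] \oplus X_1'[i+1]$; let $h$ correspond to $f$, and take $g$ with components $g_0$ (the second map of the module sequence) and $\epsilon \in \Ext^1_A(X_0, X_1')$ representing the pushout of the short exact sequence $0 \to X_1' \to Ae \to \im f \to 0$ along the inclusion $\im f \hookrightarrow X_0$. The content of the first two paragraphs then runs in reverse to verify the approximation and minimality properties and to compute $H^{-(i+1)}(\cone(g)) \cong Ae$; in part (ii) the surjectivity of $g_0$ in (b)(ii) yields $\cone(g) \cong Ae[i+1]$ and hence the triangle. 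The main obstacle is the forward identification $\ker h_0 \cong X_1'$ in (i): while $\im h_0 \subset \ker g_0$ is immediate from $g_0 h_0 = 0$, the opposite inclusion (equivalently, surjectivity of $h_0$ onto $\ker g_0 \cong Ae/X_1'$, so that $\ker h_0 = X_1'$ as submodules of $Ae$) requires the octahedral analysis above, and the subtlety is that the abstract isomorphism $H^{-(i+1)}(\cone(g)) \cong Ae$ in (a) is only determined up to the local ring $eAe = \End(Ae)$, so matching it with $\ker h_0 \hookrightarrow Ae$ crucially uses the minimality of $h_0$.
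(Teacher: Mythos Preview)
Your approach is essentially the same as the paper's: both directions hinge on the splitting $\X\cong\bigoplus_j H^{-j}(\X)[j]$ and the long exact cohomology sequence of $\cone(g)$, and the converse $(b)\Rightarrow(a)$ is carried out via the octahedral axiom in both cases (the paper writes out the two octahedral diagrams explicitly). Two points of comparison are worth noting.

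First, your reduction of $\X_1$ to degrees $-i$ and $-(i+1)$ using only the left minimality of $g$ is cleaner than the paper's route. The paper instead establishes an intermediate claim---that there is a minimal left $\add\X$-approximation $\cone(f)\to\X_1$---via Lemma~\ref{left-minimal} and the hypothesis that $r_{\X}$ is an isomorphism, and then deduces $H^{-j}(\X_1)=0$ for $j<i$ from that. Your argument avoids this detour and does not invoke $r_{\X}$ at all in the forward direction.

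Second, the ``main obstacle'' you flag---matching $\ker h_0$ with $X_1'$---is genuine, and you are right to be cautious about it. The paper sidesteps it: its candidate for (b) is the sequence $0\to H^{-(i+1)}(\X_1)\to Ae\to H^{-i}(\X_0)\to H^{-i}(\X_1)$ extracted directly from the LES of $\cone(g)$, so the first map is the connecting homomorphism $c$ rather than $h_0$; this gives exactness and the kernel condition for free. The approximation property is then argued by applying $\Hom_{D^b(A)}(-,H^{-i}(\X)[i])$ to the original sequence, which strictly speaking yields the approximation property for $h_0$ rather than for $c$, so an identification of $c$ with $h_0$ up to an automorphism of $Ae$ is left implicit. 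Your octahedral analysis is one reasonable way to make this identification rigorous.
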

\begin{proof}
$(i)$. Suppose that there exists a minimal left $\add \X$-approximation sequence
\begin{equation}\mlabel{1}
Ae[i]\stackrel{f}\rightarrow \X_0\stackrel{g}\rightarrow \X_1
\end{equation}
of $Ae[i]$ such that $Ae\cong H^{-(i+1)}(\cone(g))$ as $A$-modules. Because $Ae[i]\stackrel{f}\rightarrow \X_0$  is a minimal left $\add \X$-approximation of $Ae[i]$ and $A$ is hereditary, $\X_0$ is concentrated in degree $-i$. Hence, observing from the distinguished triangle $\X_0\stackrel{g}\rightarrow \X_1\rightarrow \cone(g)\rightarrow \X_0[1]$, there is an exact sequence of $A$-modules
$0\rightarrow H^{-(i+1)}(\X_1)\rightarrow Ae \rightarrow H^{-i}(\X_0)\rightarrow H^{-i}(\X_1).$
We only need to prove that the sequence
\begin{equation}\mlabel{2}
Ae\rightarrow H^{-i}(\X_0)\rightarrow H^{-i}(\X_1)
\end{equation}
defined as above is a minimal left $\add H^{-i}(\X)$-approximation of $Ae$.

We claim that there exists a minimal left $\add \X$-approximation $\cone(f)\rightarrow \X_1$ of $\cone(f)$ in $D^b(A)$. Indeed,
applying $\Hom_{D^b(A)}(-, \X)$ to the distinguished triangle $Ae[i]\stackrel{f}\rightarrow \X_0\rightarrow \textmd{cone}(f)\rightarrow Ae[i+1]$, since $\Hom_{D^b(A)}(Ae[i+1], \X)=0$, we have an exact sequence
$$0\rightarrow\Hom_{D^b(A)}(\textmd{cone}(f), \X)\rightarrow\Hom_{D^b(A)}(\X_0, \X)\stackrel{m}\rightarrow\Hom_{D^b(A)}(Ae[i], \X)\rightarrow 0.$$
Since projective dimension of $B^{op}$-module $\Hom_{D^b(A)}(Ae[i], \X)$ is less or equal to $1$ and since $\Hom_{D^b(A)}(\X_1, \X)\rightarrow\Hom_{D^b(A)}(\X_0, \X)\stackrel{m}\rightarrow\Hom_{D^b(A)}(Ae[i], \X)\rightarrow 0$ is minimal presentation of $B^{op}$-module $\Hom_{D^b(A)}(Ae[i], \X)$ by Lemma \mref{left-minimal}, there is an isomorphism $\Hom_{D^b(A)}(\X_1, \X)\cong\Hom_{D^b(A)}(\cone(f'), \X)$. Note that since $r_{\X}$ is an isomorphism, applying $\Hom_{D^b(A)}(-, \X)$,
$$\Hom_{D^b(A)}(\cone(f'), \X_1)\cong\Hom_{B^{op}}(\Hom_{D^b(A)}(\X_1, \X),\Hom_{D^b(A)}(\cone(f'), \X)).$$
Again by Lemma \mref{left-minimal}, there exists a minimal left $\add \X$-approximation $\cone(f)\rightarrow \X_1$ of $\cone(f)$ in $D^b(A)$.

Applying $\Hom_{D^b(A)}(-, H^{-i}(\X)[i])$ to the sequence (\mref{1}), we have an exact sequence
\begin{equation}\mlabel{3}
\Hom_{D^b(A)}(\X_1, H^{-i}(\X)[i])\rightarrow\Hom_{D^b(A)}(\X_0, H^{-i}(\X)[i])\rightarrow\Hom_{D^b(A)}(Ae[i], H^{-i}(\X)[i])\rightarrow 0.
\end{equation}
Note that $\X_0$ is concentrated in degree $-i$, then $H^{-j}(\cone(f))=0$ when $j<i$. As above claim, we have $H^{-j}(\X_1)=0$ when $j<i$. So $\Hom_{D^b(A)}(\X_1, H^{-i}(\X)[i])\cong \Hom_{D^b(A)}(H^{-i}(\X_1)[i], H^{-i}(\X)[i])$. Therefore, the sequence (\mref{3}) induces an exact sequence
$$\Hom_A(H^{-i}(\X_1), H^{-i}(\X))\rightarrow\Hom_A(H^{-i}(\X_0), H^{-i}(\X))\rightarrow\Hom_A(Ae, H^{-i}(\X))\rightarrow 0.$$
Then we get the sequence (\mref{2}) is a left $\add H^{-i}(\X)$-approximation of $Ae$. Since the sequence (\mref{1}) is left minimal and $H^{-i}(\X)[i]$ is a direct summand of $\X$, it is not difficult to get that the sequence (\mref{2}) is left minimal.

Conversely, suppose that there is an exact minimal left $\add H^{-i}(\X)$-approximation sequence $Ae\stackrel{f}\rightarrow X_0\stackrel{g}\rightarrow X_1$ of $Ae$ such that $\ker f\in  \add H^{-(i+1)}(\X)$. Let $Ae[i]\stackrel{f[i]}\rightarrow X_0[i]\stackrel{(h_1, h_2)}\longrightarrow (\ker f)[i+1]\oplus (\textmd{coker}\, f)[i]\rightarrow Ae[i+1]$ be the distinguished triangle in $D^b(A)$ induced by $f[i]$. We will show that the following sequence
\begin{equation}\mlabel{4}
Ae[i]\stackrel{f[i]}\rightarrow X_0[i]\stackrel{(h_1, g[i])}\longrightarrow (\ker f)[i+1]\oplus X_1[i]
\end{equation}
is a minimal left $\add \X$-approximation sequence of $Ae[i]$ in $D^b(A)$ with $H^{-(i+1)}(\cone(h_1, g[i]))\cong Ae$ as $A$-modules.

First, we claim $\cone(h_1, g[i])\cong (\textmd{coker}\, g)[i]\oplus Ae[i+1]$ in $D^b(A)$, hence $Ae\cong H^{-(i+1)}(\cone(h_1, g[i]))$ as $A$-modules. Indeed, by the octahedral axiom, we get the following commutative diagram
\[
\begin{array}{c}
\xymatrix{
X_0[i]\ar[r]^{(h_1, h_2)\,\,\,\,\,\,\,\,\,\,\,\,\,\,\,\,\,\,\,\,\,\,\,\,\,\,\,\,\,\,\,\,\,\,\,\,\,\,\,\,\,}\ar[d]^{=}&(\ker f)[i+1]\oplus (\textmd{coker}\, f)[i] \ar[r]\ar[d]^{
\begin{pmatrix}
1& \\
&s
\end{pmatrix}
}& Ae[i+1] \ar[r]^{f[i+1]}\ar[d]&X_0[i+1]\ar[d]^{=}\\
X_0[i]\ar[r]^{(h_1, g[i])\,\,\,\,\,\,\,\,\,\,\,\,\,\,\,\,\,\,\,\,\,\,\,\,\,\,\,\,\,\,}& (\ker f)[i+1]\oplus X_1[i]\ar[r]\ar[d]&\cone(h_1, g[i])\ar[r]\ar[d]&X_0[i+1]\\
&(\textmd{coker}\, g)[i]\ar[r]^{=}\ar[d]&(\textmd{coker}\, g)[i]\ar[d]&\\
&(\ker f)[i+2]\oplus (\textmd{coker}\, f)[i+1]\ar[r]&Ae[i+2]&
}
\end{array}
\]
where the distinguished triangle of second column is induced by the canonical short exact sequence $0\rightarrow \textmd{coker}\, f\stackrel{s}\rightarrow X_1\rightarrow \textmd{coker}\, g\rightarrow 0$. Since $A$ is hereditary, $\Hom_{D^b(A)}((\textmd{coker}\, g)[i], Ae[i+2])=0$, then the distinguished triangle of third column is split, hence $\cone(h_1, g[i])\cong (\textmd{coker}\, g)[i]\oplus Ae[i+1]$ in $D^b(A)$.

Second, we claim that the sequence $(\mref{4})$ is a left $\add \X$-approximation sequence of $Ae[i]$ in $D^b(A)$. Indeed, it is easy to check that if we apply $\Hom_{D^b(A)}(-, H^{-i}(\X)[i])$ to the sequence $(\mref{4})$, we obtain an exact sequence since $Ae\stackrel{f}\rightarrow X_0\stackrel{g}\rightarrow X_1$ is a left $\add H^{-i}(\X)$-approximation sequence of $Ae$. Now applying $\Hom_{D^b(A)}(-, H^{-(i+1)}(\X)[i+1])$ to the sequence $(\mref{4})$, we have a sequence
$$\Hom_{D^b(A)}((\ker f)[i+1]\oplus X_1[i], H^{-(i+1)}(\X)[i+1])\stackrel{F}\rightarrow  \Hom_{D^b(A)}(X_0[i], H^{-(i+1)}(\X)[i+1])\rightarrow 0.$$
Let $t\in \Hom_{D^b(A)}(X_0[i], H^{-(i+1)}(\X)[i+1])$ and $W\rightarrow X_0[i] \stackrel{t}\rightarrow H^{-(i+1)}(\X)[i+1]\rightarrow W[1]$ the induced distinguished triangle in $D^b(A)$.
By the octahedral axiom, we get the following commutative diagram:
\[
\begin{array}{c}
\xymatrix{
W\ar[r]\ar[d]^{=}&X_0[i] \ar[r]^{t}\ar[d]^{(h_1, g[i])}& H^{-(i+1)}(\X)[i+1] \ar[r]\ar[d]^{u}&W[1]\ar[d]^{=}\\
W\ar[r]& (\ker f)[i+1]\oplus X_1[i]\ar[r]^{s}\ar[d]&V\ar[r]\ar[d]&W[1]\\
&(\textmd{coker}\, g)[i]\oplus Ae[i+1]\ar[r]^{=}\ar[d]&(\textmd{coker}\, g)[i]\oplus Ae[i+1]\ar[d]&\\
&X_0[i+1]\ar[r]&H^{-(i+1)}(\X)[i+2]&
}
\end{array}
\]
Since $A$ is hereditary and $Ae$ is projective, $\Hom_{D^b(A)}((\textmd{coker}\, g)[i]\oplus Ae[i+1], H^{-(i+1)}(\X)[i+2])=0$, then the distinguished triangle of third column is split. Let $u': V\rightarrow H^{-(i+1)}(\X)[i+1]$ such that $u'u=1$. Then $u's(h_1, g[i])=u'ut=t$. Consequently, $F$ is an epimorphism. Summarily, applying $\Hom_{D^b(A)}(-,\X)$ the sequence $(\mref{4})$ we have an exact sequence $\Hom_{D^b(A)}((\ker f)[i+1]\oplus X_1[i],\X)\rightarrow \Hom_{D^b(A)}(X_0[i],\X)\rightarrow \Hom_{D^b(A)}(Ae[i],\X)\rightarrow 0$. Therefore, the sequence $(\mref{4})$ is a left $\add \X$-approximation sequence of $Ae[i]$ in $D^b(A)$.

Third, we claim that the sequence ($\mref{4}$) is left minimal. Indeed, since $f$ and $g$ are left minimal, then so are $f[i]$ and $g[i]$, we only need to prove that $h_1: X_0[i]\rightarrow \ker f[i+1]$ is left minimal. Let $q: \ker f[i+1]\rightarrow \ker f[i+1]$ with $h_1=qh_1$. Then we have a morphism of triangles:
\[
\begin{array}{c}
\xymatrix{
Ae[i]\ar[r]^{f[i]}\ar[d]^{p}&X_0[i]\ar[r]^{(h_1, h_2)\,\,\,\,\,\,\,\,\,\,\,\,\,\,\,\,\,\,\,\,\,\,\,\,\,\,\,\,\,\,\,\,\,\,\,\,\,}\ar[d]^{=}& \ker f[i+1]\oplus (\textmd{coker}\, f)[i]\ar[r] \ar[d]^{\begin{pmatrix}
q& \\
&1
\end{pmatrix}}&Ae[i+1]\ar[d]^{p[1]} \\
Ae[i]\ar[r]^{f[i]}&X_0[i]\ar[r]^{(h_1, h_2)\,\,\,\,\,\,\,\,\,\,\,\,\,\,\,\,\,\,\,\,\,\,\,\,\,\,\,\,\,\,\,\,\,\,\,\,}& \ker f[i+1]\oplus (\textmd{coker}\, f)[i]\ar[r]&Ae[i+1]
}
\end{array}
\]
Since $Ae$ is indecomposable and $f\neq 0$, $f$ is right minimal and then so is $f[i]$.  Hence $p$ is an isomorphism. Then from the morphism of triangles, we have $q$ is an isomorphism.

$(ii)$. Suppose that there is a minimal left $\add \X$-approximation sequence
$Ae[i]\rightarrow \X_0\rightarrow \X_1$
of $Ae[i]$ which forms a distinguished triangle in $D^b(A)$. Then $\X_0$ is concentrated in degree $-i$. Hence there is an exact sequence
$$0\rightarrow H^{-(i+1)}(\X_1)\rightarrow Ae\rightarrow H^{-i}(\X_0)\rightarrow H^{-i}(\X_1)\rightarrow 0.$$
By a similar proof of $(i)$, $Ae\rightarrow H^{-i}(\X_0)\rightarrow H^{-i}(\X_1)\rightarrow 0$ is a minimal left $\add H^i(\X)$-approximation sequence of $Ae$.

Conversely, suppose there is an exact left $\add H^{-i}(\X)$-approximation sequence
$$Ae\stackrel{f}\rightarrow X_0\stackrel{g}\rightarrow X_1\rightarrow 0$$
 of $Ae$ such that $\ker f\in  \add H^{-(i+1)}(\X)$. Let $Ae[i]\stackrel{f[i]}\rightarrow X_0[i]\stackrel{(h_1, h_2)}\longrightarrow (\ker f)[i+1]\oplus (\textmd{coker}\, f)[i]\rightarrow Ae[i+1]$ be the distinguished triangle in $D^b(A)$ induced by $f[i]$. We have showed in $(a)$ that the following sequence
$$Ae[i]\stackrel{f[i]}\rightarrow X_0[i]\stackrel{(h_1, g[i])}\longrightarrow (\ker f)[i+1]\oplus X_1[i]$$
is a minimal left $\add \X$-approximation sequence of $Ae[i]$, and showed $\cone(h_1, g[i])\cong Ae[i+1]\oplus (\textmd{coker}\,g)[i]$. Note that now $\textmd{coker}\,g=0$. Then the second row of the first diagram in the proof of $(i)$ yields the desired distinguished triangle.
\end{proof}

\begin{lemma}\mlabel{act-here}
Let $A$ and $B$ be two algebras. Let $\X$ be a bounded $A$-$B$-complex. Assume $B$ is hereditary. Then for $b\in B$, $r_{\X}(b)\neq 0$ if and only if $\X\OT_BBb\neq 0$ in $D^b(A)$.
\end{lemma}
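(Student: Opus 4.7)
The ``if'' direction ($\X\OT_B Bb\neq 0\Rightarrow r_\X(b)\neq 0$) is simply the contrapositive of the last assertion of Lemma \mref{r}, which makes no use of hereditariness. So the substantive part is the converse, and my plan is to exploit the hereditariness of $B$ to split the canonical inclusion $Bb\hookrightarrow B$ and then reduce to Lemma \mref{r}.

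First I would factor the right multiplication map $h\colon B\to B$, $x\mapsto xb$, as $h=gf$, where $f\colon B\twoheadrightarrow Bb$ is the canonical epimorphism and $g\colon Bb\hookrightarrow B$ is the canonical inclusion. As already observed in the proof of Lemma \mref{r}, the identification $\X\simeq\X\OT_B B$ together with Lemma \mref{quasi} gives $r_\X(b)=\X\OT_B h=(\X\OT_B g)(\X\OT_B f)$ in $D^b(A)$. Hence the hypothesis $r_\X(b)=0$ rewrites as $(\X\OT_B g)(\X\OT_B f)=0$ in $D^b(A)$.

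Next I would use that $B$ is hereditary: the left ideal $Bb\subseteq {}_BB$ is a submodule of the projective $B$-module $B$ and is therefore itself projective, so the inclusion $g$ admits a retraction $\pi\colon B\to Bb$ of left $B$-modules with $\pi g=1_{Bb}$. Applying $\X\OT_B -$ we obtain $(\X\OT_B\pi)(\X\OT_B g)=1_{\X\OT_B Bb}$ in $D^b(A)$. Composing the previous identity on the left with $\X\OT_B\pi$ then yields $\X\OT_B f=0$ in $D^b(A)$, and Lemma \mref{r} concludes that $\X\OT_B Bb=0$ in $D^b(A)$.

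There is no real obstacle here: the decisive point is merely that hereditariness forces $g$ to split, which is exactly what fails in Example \mref{exam} (where $Bb=Ax$ is not projective over $B=A$), so this is precisely the obstruction that the hereditary hypothesis is designed to remove.
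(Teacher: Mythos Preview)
There is a genuine gap. You argue that since $Bb$ is projective (being a submodule of ${}_BB$ over the hereditary algebra $B$), the inclusion $g\colon Bb\hookrightarrow B$ admits a left $B$-linear retraction $\pi$. But projectivity of $Bb$ is the wrong splitting criterion here: it guarantees that every epimorphism \emph{onto} $Bb$ splits---so the canonical surjection $f\colon B\to Bb$ acquires a section---not that monomorphisms \emph{out of} $Bb$ split. For $g$ to admit a retraction one would need the cokernel $B/Bb$ to be projective, and $B/Bb$ is a quotient of $B$, about which hereditariness says nothing.

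Concretely, let $B=KQ$ for $Q\colon 1\xrightarrow{\alpha}2$ and $b=\alpha$. Then $B\alpha=K\alpha\cong P(2)$ is projective, yet $B/B\alpha\cong S(1)\oplus S(2)$ with $S(1)$ non-projective, so the inclusion $K\alpha\hookrightarrow B$ does \emph{not} split. Taking $A=K$ and $\X$ the simple right $B$-module at vertex~$2$, one computes $r_\X(\alpha)=0$ while $\X\OT_B B\alpha\cong K\neq 0$; in this situation $\X\OT_B g$ is the zero map between one-dimensional objects and certainly cannot be cancelled on the left as your argument requires. The paper's proof differs exactly at this point: it splits the \emph{epimorphism} $f$ (choosing $g\colon Bb\to B$ with $fg=1_{Bb}$) rather than the inclusion, which is the splitting that hereditariness actually provides.
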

\begin{proof}
By Lemma \mref{r}, we only need to prove the sufficiency. Assume $\X\OT_BBb\neq 0$ in $D^b(A)$. Since $B$ is hereditary, the canonical epimorphism $f: B\rightarrow Bb$ is split. Let $g: Bb\rightarrow B$ such that $fg=1$. Suppose $r_{\X}(b)=0$. Then $\X\OT_Bgf=0$. Hence $\X\OT_Bf=\X\OT_Bfgf=(\X\OT_Bf)(\X\OT_Bgf)=0$. Again by Lemma $\mref{r}$, $\X\OT_BBb=0$ in $D^b(A)$. This is a contradiction.
\end{proof}

Let $A$ be a herediatry algebra and $\X$ a bounded $A$-complex. Let $B=\End_{D^b(A)}(\X)^{op}$. We may assume without loss of generality that
$$\X=X_0\oplus X_1[1]\oplus \cdots \oplus X_n[n],$$
where $X_i$ are $A$-modules and $X_0\neq 0\neq X_n$. Then each $X_i$ is endowed with a $B^{op}$-module structure given by the canonical algebra epimorphism:
$B^{op}=\End_{D^b(A)}(\X)\rightarrow \End_A(X_i).$
Hence each $X_i$ is endowed with an $A$-$B$-bimodule structure. We denote by $A$-$B$-complex
$$T(\X):=X_0\oplus X_1[1]\oplus \cdots \oplus X_n[n],$$
where $X_i$ are $A$-$B$-bimodules defined as above.
\begin{lemma}\mlabel{rTX}
 With the notation defined as above, if $B$ is hereditary, then $r_{T(\X)}$ is an isomorphism.
\end{lemma}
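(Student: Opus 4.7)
The plan is to exploit the tautological coincidence of $T(\X)$ and $\X$ as underlying $A$-complexes: this yields a canonical $k$-algebra isomorphism $\End_{D^b(A)}(T(\X))^{op}\cong\End_{D^b(A)}(\X)^{op}=B$, so $r_{T(\X)}$ becomes a $k$-algebra endomorphism of $B$, and it suffices to verify that it is the identity. First I would unpack the definition concretely. Writing $\iota_i\colon X_i[i]\hookrightarrow T(\X)$ and $\pi_i\colon T(\X)\twoheadrightarrow X_i[i]$ for the structural inclusions and projections and setting $e_i=\iota_i\pi_i$, the canonical epimorphism $\phi_i\colon E=\End_{D^b(A)}(\X)\to \End_A(X_i)$ is just $\beta\mapsto\pi_i\beta\iota_i$, and the right $B$-action on each $X_i$ is $x\cdot b=\phi_i(b)(x)$ (reading $b$ as an element of $E$ under $B=E^{op}$). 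Because $T(\X)$ has vanishing differentials and its summands occupy distinct degrees, $\End_{C(A)}(T(\X))=\prod_i\End_A(X_i)$ and $r'_{T(\X)}(b)=(\phi_i(b))_i$; transporting this back along the identification with $E^{op}=B$ gives the compact formula $r_{T(\X)}(b)=\sum_i e_ibe_i$, i.e., $r_{T(\X)}$ is the ``diagonal projection'' with respect to the orthogonal idempotents $e_i$.

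The second step is to show that $b=\sum_ie_ibe_i$ for every $b\in B$, or equivalently that the off-diagonal blocks $e_jbe_i$ ($j\neq i$) vanish identically in $E$. Since $A$ is hereditary, $\Hom_{D^b(A)}(X_i[i],X_j[j])=\Ext^{j-i}_A(X_i,X_j)$ already vanishes unless $j\in\{i,i+1\}$, so what really needs to be proved is that all the classes in $\Ext^1_A(X_i,X_{i+1})$ vanish under the hypothesis that $B$ is hereditary.

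For the vanishing step I would combine Lemma~\mref{bi} with Lemma~\mref{act-here}. By Lemma~\mref{bi}, establishing that $r_{T(\X)}$ is an isomorphism is equivalent to producing a one-sided module isomorphism $B\to\End_{D^b(A)}(T(\X))$; the canonical $k$-algebra isomorphism already produces a candidate, and the remaining task is to show injectivity of $r_{T(\X)}$ (since $\dim_kB=\dim_k\End_{D^b(A)}(T(\X))^{op}$). Suppose $b\in B$ has $r_{T(\X)}(b)=0$; since $B$ is hereditary, Lemma~\mref{act-here} identifies this with $T(\X)\OT_BBb=0$ in $D^b(A)$, while Lemma~\mref{direct}-type decomposition of $T(\X)$ along the primitive idempotents of $B$, together with the explicit description of the $X_i$-components, should force $Bb=0$, hence $b=0$. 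This will give injectivity, and therefore the desired isomorphism.

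The main obstacle I expect is exactly the passage from $r_{T(\X)}(b)=0$ to $b=0$: the inference is not formal, and the hereditariness of $B$ must enter in an essential way through Lemma~\mref{act-here} (whose proof in turn used that every $Bb$ is projective). Carrying out this comparison rigorously—matching up the explicit formula $r_{T(\X)}(b)=\sum_ie_ibe_i$ with the decomposition of $Bb$ into indecomposable projective summands $Be_i$, and then using that each $T(\X)\OT_BBe_i$ recovers precisely the summand $X_i[i]$—will be the technical heart of the argument, and it is there that the hypothesis ``$B$ hereditary'' does its real work.
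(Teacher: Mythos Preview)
Your third paragraph is essentially the paper's proof: since $T(\X)$ and $\X$ are literally the same $A$-complex, $\dim_K B=\dim_K\End_{D^b(A)}(T(\X))$ and it suffices to prove injectivity; Lemma~\mref{act-here} reduces this to $T(\X)\OT_B Be\neq 0$ for every nonzero idempotent $e\in B$; and since $T(\X)\OT_B Be\cong\bigoplus_j X_je[j]$ (immediate because $Be$ is projective --- do not cite Lemma~\mref{direct} here, as that lemma already presupposes what you are proving), one observes that some $X_te\neq 0$. The last step holds because the joint kernel $\bigcap_j\ker\phi_j\subset E$ is the off-diagonal part $\bigoplus_i\Ext^1_A(X_i,X_{i+1})$, a nilpotent ideal (it squares to zero since $A$ is hereditary) and hence contains no nonzero idempotent. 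In your final paragraph you conflate the homology idempotents $e_0,\dots,e_n$ with primitive idempotents of $B$; these are different objects, and only the nilpotency observation is needed.

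Your first two paragraphs, however, head in a misleading direction. The formula $r_{T(\X)}(b)=\sum_ie_ibe_i$ is correct, and it shows that $\ker r_{T(\X)}$ is exactly the off-diagonal $\bigoplus_i\Ext^1_A(X_i,X_{i+1})$. So ``$r_{T(\X)}$ is injective'' and ``all $\Ext^1_A(X_i,X_{i+1})=0$'' are the \emph{same} statement, not two alternative strategies: the ``pivot'' in your third paragraph is not a change of approach but a restatement of the goal. In particular, ``it suffices to verify that $r_{T(\X)}$ is the identity'' is not a convenient sufficient condition you might establish independently --- given your own formula, it \emph{is} the lemma. The invocation of Lemma~\mref{bi} is also a red herring here: the ``canonical $k$-algebra isomorphism'' you propose as a candidate $B$-module map is just the identity on $B$, and checking that it is $B$-linear for the module structure induced by $r_{T(\X)}$ is again equivalent to $r_{T(\X)}$ being the identity. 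All roads lead back to Lemma~\mref{act-here}, which is where the hereditary hypothesis on $B$ actually enters.
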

\begin{proof}
Since $B=\End_{D^b(A)}(\X)^{op}$, we only need to prove $r_{T(\X)}$ is injective. Since $B$ is hereditary, by Lemma \mref{act-here}, we only need to prove that for any non-zero idempotent $e\in B$, $T(\X)\OT_BBe\neq 0$ in $D^b(A)$. Note that $T(\X)\OT_BBe\cong X_0e\oplus X_1e[1]\oplus \cdots \oplus X_ne[n]$ in $D^b(A)$. Following from the construction of $B$-module structures for $X_i$, there exists $X_t$ for some $t$ such that $X_te\neq 0$. Hence the assertion holds.
\end{proof}

\begin{prop}\mlabel{dd}
Let $A$ be a hereditary algebra. Let $\X$ be a bounded $A$-complex. Assume $\End_{D^b(A)}(\X)$ is hereditary.
Then $\X$ has the derived double centraliser property if and only if for any indecomposable projective $A$-module $Ae$,
\begin{enumerate}
\item
there is a unique $i\in \mathbb{Z}$ such that
$\Hom_{D^b(A)}(Ae[i], \X)\neq 0;$
\item
there is an exact left $\emph{add}\, H^{-i}(\X)$-approximation sequence
$$Ae\stackrel{f}\rightarrow X_0\rightarrow X_1$$
of $Ae$ such that $\ker f\in  \add H^{-(i+1)}(\X)$.
\end{enumerate}
\end{prop}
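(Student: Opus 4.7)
The plan is to reduce Proposition \mref{dd} to the bimodule characterisation of Theorem \mref{charact} via the canonical bimodule lift $T(\X)$ constructed immediately before Lemma \mref{rTX}, and then to translate its derived-category approximation condition into the module-category condition (b) using Lemma \mref{ddcp}(i).

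Concretely, I would set $B := \End_{D^b(A)}(\X)^{op}$, which is hereditary by hypothesis (since an algebra is hereditary iff its opposite is), and form the $A$-$B$-complex $T(\X)$. By construction $T(\X) \cong \X$ in $D^b(A)$, so for every indecomposable projective $Ae$ and every $i \in \mathbb{Z}$ we have $\Hom_{D^b(A)}(Ae[i], T(\X)) \cong \Hom_{D^b(A)}(Ae[i], \X)$, and $H^{-j}(T(\X)) = H^{-j}(\X)$ as $A$-modules for every $j$. By Lemma \mref{rTX}, $r_{T(\X)}$ is an isomorphism, so Theorem \mref{charact} applies to $T(\X)$ over the hereditary base $B$ and yields: $T(\X)$ has \ddcp if and only if condition (a) of the proposition holds and, for each such $Ae$, there is a minimal left $\add T(\X)$-approximation sequence $Ae[i] \to \X_0 \stackrel{g}{\to} \X_1$ with $Ae \cong H^{-(i+1)}(\cone(g))$ as $A$-modules. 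The latter is exactly condition (a) of Lemma \mref{ddcp}(i), and is therefore equivalent to the existence of a minimal exact left $\add H^{-i}(\X)$-approximation sequence $Ae \stackrel{f}{\to} X_0 \to X_1$ with $\ker f \in \add H^{-(i+1)}(\X)$, i.e.\ to condition (b) of the proposition (existence of such a left approximation being automatically equivalent to existence of a minimal one, by splitting off redundant direct summands).

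For sufficiency, if (a) and (b) hold the chain of equivalences above gives that $T(\X)$ has \ddcp as an $A$-$B$-complex, and Definition \mref{def} forces $\X$ itself to have \ddcp. For necessity, if $\X$ has \ddcp then Definition \mref{def} produces an $A$-$C$-complex $\Y$ with $\X \cong \Y$ in $D^b(A)$ and both $r_\Y$ and $l_\Y$ isomorphisms; from $r_\Y$ iso we deduce $C \cong \End_{D^b(A)}(\Y)^{op} \cong \End_{D^b(A)}(\X)^{op} = B$, so $C$ is hereditary, and Theorem \mref{charact} applies to $\Y$. Conditions (1) and (2) of that theorem then transfer to $\X$ through the isomorphism $\Y \cong \X$ in $D^b(A)$: condition (1) gives (a) directly, and condition (2) yields (b) via Lemma \mref{ddcp}(i), using $H^{-j}(\Y) \cong H^{-j}(\X)$ as $A$-modules for every $j$.

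The step requiring the most care is the translation between an approximation of $Ae[i]$ by $\add \X$ in the derived category and an approximation of $Ae$ by $\add H^{-i}(\X)$ in the module category; however, that delicate translation is precisely the content of Lemma \mref{ddcp}(i), whose proof is already in place, so the proof of Proposition \mref{dd} reduces to an assembly of Theorem \mref{charact}, Lemma \mref{rTX} and Lemma \mref{ddcp}(i) together with Definition \mref{def}. The only bookkeeping subtlety is making sure that the bimodule witness used in each direction (either $T(\X)$ for sufficiency, or the abstract $\Y$ supplied by \ddcp for necessity) simultaneously has $r$ an isomorphism and a hereditary base algebra, so that both Theorem \mref{charact} and Lemma \mref{ddcp}(i) apply.
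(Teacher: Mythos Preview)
Your proposal is correct and follows essentially the same route as the paper: for sufficiency you lift to $T(\X)$ via Lemma~\mref{rTX} and then feed it through Theorem~\mref{charact} and Lemma~\mref{ddcp}(i), exactly as the paper does; for necessity you unpack the witness $\Y$ from Definition~\mref{def} and check that its base algebra is hereditary before invoking the same two results, which is what the paper compresses into the single sentence ``Combining Theorem~\mref{charact} with Lemma~\mref{ddcp}(i), we only need to prove the sufficiency.'' Your explicit remark on passing between minimal and non-minimal approximation sequences (the kernel is preserved because the redundant component of the approximation factors through the minimal one) is a detail the paper leaves implicit.
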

\begin{proof}
Combining Theorem \mref{charact} with Lemma \mref{ddcp}$(i)$, we only need to prove the sufficiency. Suppose the conditions $(1)$ and $(2)$ hold for $\X$, then so do for $T(\X)$. By Lemma \mref{rTX}, $r_{T(\X)}$ is an isomorphism, again by Theorem \mref{charact} and Lemma \mref{ddcp}$(i)$, $T(\X)$ has \ddcp, then so does $\X$.
\end{proof}

The proof of the following proposition is similar, just by Theorem \mref{t} and Lemmas \mref{ddcp}$(ii)$ and \mref{rTX}.
\begin{prop}\mlabel{two-tilting}
Let $A$ be a hereditary algebra. Let $\X$ be a bounded $A$-complex. Assume $\End_{D^b(A)}(\X)$ is hereditary.
Then $\X$ is isomorphic to a tilting complex in $D^b(A)$ if and only if for any indecomposable projective $A$-module $Ae$,
\begin{enumerate}
\item
there is a unique $i\in \mathbb{Z}$ such that $\Hom_{D^b(A)}(Ae[i], \X)\neq 0$;
\item
there is an exact left $\emph{add}\, H^{-i}(\X)$-approximation sequence
$$Ae\stackrel{f}\rightarrow X_0\rightarrow X_1\rightarrow 0$$
 of $Ae$ such that $\ker f\in  \emph{add}\, H^{-(i+1)}(\X)$.
\end{enumerate}
\end{prop}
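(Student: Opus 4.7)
The plan is to mimic the proof of Proposition \mref{dd}, but replace Lemma \mref{ddcp}(i) by Lemma \mref{ddcp}(ii) and replace Theorem \mref{charact} by Theorem \mref{t}. The entire argument is a reduction to the bimodule setting of Theorem \mref{t} via the canonical refinement $T(\X)$ introduced just before Lemma \mref{rTX}.

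For the necessity direction, I would assume $\X$ is isomorphic to a tilting complex in $D^b(A)$. Set $B := \End_{D^b(A)}(\X)^{op}$, which is hereditary by hypothesis. By Rickard's theorem recalled in Section \mref{basic}, there is a two-sided tilting $A$-$B$-complex $\bar{\X}$ with $\bar{\X} \cong \X$ in $D^b(A)$, and the two-sided tilting property forces $r_{\bar{\X}}$ to be an isomorphism. Apply Theorem \mref{t} to $\bar{\X}$ to obtain, for each indecomposable projective $Ae$, the uniqueness of $i$ and a minimal left $\add \bar{\X}$-approximation sequence $Ae[i] \to \bar{\X}_0 \to \bar{\X}_1$ forming a distinguished triangle in $D^b(A)$. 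Since these statements concern only the underlying $A$-complex and $\add \bar{\X} = \add \X$ in $D^b(A)$, Lemma \mref{ddcp}(ii) then delivers exactly the exact approximation sequence $Ae \to X_0 \to X_1 \to 0$ with $\ker f \in \add H^{-(i+1)}(\X)$ required by condition (2).

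For the sufficiency direction, I would form the $A$-$B$-complex $T(\X)$ with $B := \End_{D^b(A)}(\X)^{op}$. By Lemma \mref{rTX}, $r_{T(\X)}$ is an isomorphism (this uses that $B$ is hereditary). Conditions (1) and (2) depend only on the $A$-complex structure of $\X$, so they hold verbatim for $T(\X)$, since $H^{-i}(T(\X)) = H^{-i}(\X)$ as $A$-modules and $\add T(\X) = \add \X$ in $D^b(A)$. By Lemma \mref{ddcp}(ii), these translated conditions produce, for each indecomposable projective $Ae$, a minimal left $\add T(\X)$-approximation sequence $Ae[i] \to \X_0 \to \X_1$ that forms a distinguished triangle in $D^b(A)$. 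Now Theorem \mref{t} applies to $T(\X)$ and forces it to be two-sided tilting; in particular $\X \cong T(\X)$ in $D^b(A)$ is isomorphic to a tilting complex.

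The only real obstacle is bookkeeping: one must verify carefully that the hypotheses (1) and (2) are intrinsic to $\X$ as an $A$-complex and genuinely transfer to the bimodule refinement $T(\X)$, so that Theorem \mref{t} can legitimately be invoked. Since approximations, $\ker$, $\add$ and homologies in the statement all live in $D^b(A)$ or $A$-\text{mod}, this is immediate from the construction of $T(\X)$, and no additional calculation beyond quoting Theorem \mref{t}, Lemma \mref{ddcp}(ii) and Lemma \mref{rTX} is required.
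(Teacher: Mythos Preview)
Your proposal is correct and matches the paper's own proof, which simply reads: ``similar, just by Theorem~\mref{t} and Lemmas~\mref{ddcp}(ii) and~\mref{rTX}.'' The only cosmetic difference is in the necessity direction: you invoke Rickard's theorem directly to produce a two-sided tilting complex $\bar{\X}\cong\X$, whereas the paper (following the template of Proposition~\mref{dd}) would more naturally pass through $T(\X)$ itself---since $A$ is hereditary one has $T(\X)\cong\X$ in $D^b(A)$, so $T(\X)$ is isomorphic to a tilting complex, and then Proposition~\mref{two-sided} together with Lemma~\mref{rTX} makes $T(\X)$ two-sided tilting, after which Theorem~\mref{t} and Lemma~\mref{ddcp}(ii) apply. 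Both routes are equivalent and your bookkeeping remarks about conditions (1) and (2) being intrinsic to the underlying $A$-complex are exactly what is needed.
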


\begin{exam}
Let $A$ be the path algebra $KQ$ of quiver $Q: \bullet_1\rightarrow \bullet_2\rightarrow \bullet_3$.  Let $P(i)$ (resp. $I(i), S(i)$) be the indecomposable projective (indecomposable injective, simple) $A$-module corresponding to the vertexes. Let $\X=S(1)\oplus I(2)\oplus S(3)[1]$.  It is easy to see that $\End_{D^b(A)}(\X)$ is hereditary and for all integers $i$ we get
$$\Hom_{D^b(A)}(P(1)[i], \X)=0, \text{   unless   } i=0;$$
$$\Hom_{D^b(A)}(P(2)[i], \X)=0, \text{   unless   } i=0;$$
$$\Hom_{D^b(A)}(P(3)[i], \X)=0, \text{   unless   } i=1.$$
The exact left approximation sequences of projectives are given by the following maps:
$$P(1)\stackrel{f_1}\rightarrow I(2)\rightarrow 0, \text{   with   } \ker f_1 =S(3);$$
$$P(2)\stackrel{f_2}\rightarrow I(2)\rightarrow S(1)\rightarrow 0, \text{   with   } \ker f_2 =S(3);$$
$$P(3)\stackrel{=}\rightarrow S(3)\rightarrow 0.$$
By Proposition \mref{two-tilting},  $\X$ is tilting.
\end{exam}

\subsection{On homologies of complexes}

After characterized the complexes with the derived double centraliser property and two-sided tilting complexes over hereditary algebras, under the setting in the former subsection, in this subsection, we will turn our attention to the shapes of their homologies.
The following lemma is well-known, but we could not find a reference, so we provide the details.

\begin{lemma}\mlabel{sub}

\begin{enumerate}
\item
Let $A$ be a hereditary algebra and $P$ a projective $A$-module. Then $\End_A(P)$ is hereditary.
\item
Let $A$ be an algebra. Let $P$ be a projective $A$-module and $E=\End_A(P)^{op}$. Let $X$ and $Y$ be two $A$-modules. Assume for any indecomposable projective $A$-module $P'\notin \emph{add}\, P$, $\Hom_A(P', X)=0=\Hom_A(P', Y)$. Then $X$ and $Y$ have natural $E$-module structures satisfying $\Hom_E(X, Y)\cong \Hom_A(X, Y)$.
\end{enumerate}
\end{lemma}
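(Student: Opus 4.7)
For part (1), I would reduce to the basic case: since being hereditary is Morita invariant, I may assume $P = Ae$ for some idempotent $e \in A$, so that $\End_A(P) \cong (eAe)^{op}$, and the task becomes to prove $eAe$ is hereditary. For this, let $J$ be an arbitrary left ideal of $eAe$. Then $AJ$ is a left $A$-submodule of the projective $A$-module $Ae$, hence projective by heredity of $A$; moreover a direct check (using $j = ej$ for $j \in J$ and $eae \in eAe$ for $a \in A$) shows $e(AJ) = J$. It therefore suffices to prove that $\Hom_A(Ae, -) \cong e(-)$, viewed as a functor from $A$-mod to $eAe$-mod, carries projective $A$-modules to projective $eAe$-modules.

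By the standard adjunction $Ae \otimes_{eAe} - \dashv \Hom_A(Ae, -)$, this amounts to showing that $Ae$ is flat (equivalently, projective) as a right $eAe$-module. This is the main step, and is where heredity of $A$ really enters. I would decompose $Ae = \bigoplus_{f} fAe$ over a complete set of primitive orthogonal idempotents $\{f\}$ of $A$; for $f \in \add e$, $fAe$ is a direct summand of $eAe$ and is automatically projective, while for $f \notin \add e$ one uses heredity to factor each element of $fAe$ uniquely as a product of an element reaching the idempotent $e$ for the first time followed by an element of $eAe$, yielding a decomposition of $fAe$ into indecomposable right projective $eAe$-modules.

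For part (2), I would again reduce to $P = Ae$, so that $E = eAe \subseteq A$. Via the natural isomorphism $\Hom_A(Af, X) \cong fX$ for primitive idempotents $f$, the hypothesis translates into $(1-e)X = 0 = (1-e)Y$, that is, $X = eX$ and $Y = eY$. The $E$-module structures on $X$ and $Y$ are then obtained by restricting the $A$-action along the inclusion $eAe \hookrightarrow A$, and the inclusion $\Hom_A(X, Y) \subseteq \Hom_E(X, Y)$ is immediate.

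For the reverse inclusion, the crucial observation is that for $x \in X = eX$ and any $a \in A$,
\[
ax = a(ex) = (ae)x = e((ae)x) = (eae)x,
\]
where the fourth equality uses $(ae)x \in X = eX$. Analogously $a\phi(x) = (eae)\phi(x)$ for any map $\phi : X \to Y$, since $\phi(x) \in Y = eY$. Hence for any morphism $\phi$ of $eAe$-modules,
\[
\phi(ax) = \phi((eae)x) = (eae)\phi(x) = a\phi(x),
\]
so $\phi$ is $A$-linear. The principal obstacle in the whole argument is the projectivity of $Ae$ over $eAe$ in part (1); once that is granted, everything in part (2) reduces to the identity $ax = (eae)x$ valid on $X = eX$.
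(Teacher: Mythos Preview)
Your argument for part~(2) is correct and in fact more direct than the paper's. The paper passes through the quotient $A/A(1-e)A$: it observes that $eAe \to A/A(1-e)A,\ a\mapsto\bar a$, is a surjective algebra map, so that $A/A(1-e)A\text{-mod}$ sits fully faithfully inside both $eAe\text{-mod}$ and $A\text{-mod}$, and $X,Y$ lie in this common subcategory. Your computation $ax=(eae)x$ on $X=eX$ achieves the same conclusion in one line, without introducing the quotient at all.

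For part~(1) the paper gives no argument (it declares the statement ``trivial''), so your attempt is welcome, but two points need attention.

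First, a left/right slip: you want $e(-)=\Hom_A(Ae,-)$ to carry projective $A$-modules to projective $eAe$-modules. Since $e(A)\cong eA$ as a left $eAe$-module, this is equivalent to $_{eAe}(eA)$ being projective, not to $(Ae)_{eAe}$ being projective. The adjunction $Ae\otimes_{eAe}-\dashv e(-)$ you invoke tells you only that exactness of the \emph{left} adjoint (i.e.\ flatness of $(Ae)_{eAe}$) is equivalent to $e(-)$ preserving \emph{injectives}; it says nothing about preservation of projectives by $e(-)$. For finite-dimensional hereditary $A$ both statements happen to be true (apply the same reasoning to $A^{op}$), so the conclusion survives, but the logical link you wrote is not the right one. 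Your subsequent decomposition should accordingly read $eA=\bigoplus_f eAf$ over primitive idempotents $f$, as left $eAe$-modules.

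Second, the ``reaching $e$ for the first time'' factorisation is a path-algebra heuristic and does not constitute a proof over an arbitrary field. A clean way to finish, in the spirit of what you sketched, is induction on the Loewy length of $Af$: if $f\in\add e$ then $eAf$ is a summand of $eAe$; if $f\notin\add e$ then $eS(f)=0$, so $eAf\cong e\cdot\mathrm{rad}(Af)$, and since $A$ is hereditary $\mathrm{rad}(Af)\cong\bigoplus_j Af_j$ with each $Af_j$ of strictly smaller Loewy length, whence $eAf\cong\bigoplus_j eAf_j$ is projective by induction. This makes your ``first hitting $e$'' idea precise.
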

\begin{proof}


$(1)$ is trivial. 

$(2)$. We may assume that $P$ is basic. Then there is an idempotent $e\in A$ such that $P\cong Ae$ and $E\cong eAe$. Since for any projective $A$-module $P'\notin \add P$, $\Hom_A(P', X)=0=\Hom_A(P', Y)$, then $(1-e)X=0=(1-e)Y$. Hence $X$ and $Y$ are endowed with the natural $E$-module structures. Consider the map $eAe\rightarrow A/A(1-e)A, a\mapsto \bar{a}$. It is not difficult to verify that the map is an algebra epimorphism. Hence there are fully faithful embeddings:
$$eAe\textmd{-mod}\hookleftarrow A/A(1-e)A\textmd{-mod}\hookrightarrow A\textmd{-mod}.$$
So $\Hom_E(X, Y)\cong\Hom_{A/A(1-e)A}(X, Y)\cong \Hom_A(X, Y)$.
\end{proof}

\begin{prop}\mlabel{H}
Let $A$ and $B$ be two hereditary algebras. Let $\X$ be a bounded $A$-$B$-complex with \ddcp. Assume that $\X$ has non-zero homology exactly at degree in $I\subseteq \mathbb{Z}$. Then there is a decomposition $\oplus_{i\in I} P_i$ of $_AA$ with $E_i:= \emph{End}_A(P_i)^{op}$ and a decomposition $\oplus_{i\in I} P'_i$ of $B_B$ with $E'_i:=\emph{End}_{B^{op}}(P'_i)$, such that for each $i\in I$, $H^i(\X)$ endowed with natural $E_i$-$E'_i$-bimodule has the double centraliser property. Moreover, if $\X$ is two-sided tilting, the $E_i$-$E'_i$-bimodule $H^i(\X)$ is tilting.
\end{prop}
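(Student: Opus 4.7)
My plan is to build the idempotent decompositions directly from the indexing supplied by Theorem \mref{charact}, then to transport the isomorphism $r_{\X}$ (resp.\ $l_{\X}$) onto the summand $\X\OT_B Bg_j$ of $\X$ to deduce the double centraliser property of each $H^j(\X)$, and finally to pull the two-sided tilting approximation sequence across the functor $\Hom_A(Af_j,-)$ so as to apply Corollary \mref{module}.

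\textbf{Constructing $P_j$ and $P'_j$.} For each primitive idempotent $e$ of $A$, Theorem \mref{charact} supplies a unique $i_e$ with $\Hom_{D^b(A)}(Ae[i_e],\X)\neq 0$; hereditariness of $A$ and projectivity of $Ae$ make this group $\Hom_A(Ae,H^{-i_e}(\X))\cong eH^{-i_e}(\X)$, so $j_e:=-i_e\in I$ is the unique index with $eH^{j_e}(\X)\neq 0$. Set $f_j=\sum_{e:\,j_e=j}e$, $P_j=Af_j$; then $_AA=\bigoplus_{j\in I}P_j$ and $E_j\cong f_jAf_j$. Applying the same theorem to $\X$ viewed as a $B^{op}$-$A^{op}$-complex (with $l_{\X}$ playing the role of the $r$-map and $B^{op}$ hereditary) yields idempotents $g_j\in B$ with $B_B=\bigoplus_{j\in I}g_jB$, $E'_j\cong g_jBg_j$, and $j'_{e'}$ the unique index for which $H^{j'_{e'}}(\X)e'\neq 0$. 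The relations $f_jH^j(\X)=H^j(\X)$ and $H^j(\X)g_j=H^j(\X)$ now follow, equipping $H^j(\X)$ with a natural $E_j$-$E'_j$-bimodule structure by restriction.

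\textbf{The double centraliser property.} Hereditariness of $A$ gives $\X\OT_B Bg_j\cong\bigoplus_{k\in I}H^k(\X)g_j[-k]$ in $D^b(A)$; uniqueness forces $H^k(\X)g_j=0$ for $k\neq j$, so $\X\OT_B Bg_j\cong H^j(\X)[-j]$. Cutting $r_{\X}:B\xrightarrow{\sim}\End_{D^b(A)}(\X)^{op}$ by the idempotent $g_j$ then produces
\[
E'_j=g_jBg_j\;\xrightarrow{\sim}\;\End_{D^b(A)}(\X\OT_B Bg_j)^{op}=\End_A(H^j(\X))^{op},
\]
which coincides with $r_{H^j(\X)}$ since right multiplication by $b\in E'_j$ on this summand transports to right multiplication by $b$ on $H^j(\X)[-j]$. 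Combined with the identification $\End_A(H^j(\X))=\End_{E_j}(H^j(\X))$ from Lemma \mref{sub}(2), this shows $r_{H^j(\X)}:E'_j\xrightarrow{\sim}\End_{E_j}(H^j(\X))^{op}$; a mirror argument using $l_{\X}$ handles $l_{H^j(\X)}$.

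\textbf{Tilting case.} Assume $\X$ is two-sided tilting. For $Ae$ with $j_e=j$, so $i=-j$, Theorem \mref{t} combined with Lemma \mref{ddcp}(ii) supplies an exact sequence
\[
0\rightarrow K\rightarrow Ae\stackrel{f}\rightarrow X_0\rightarrow X_1\rightarrow 0
\]
with $X_0,X_1\in\add H^j(\X)$, $K\in\add H^{j-1}(\X)$, and $f$ a minimal left $\add H^j(\X)$-approximation. Apply the exact functor $\Hom_A(Af_j,-)=f_j(-)$: one has $f_jX_i=X_i$ and $f_jK=0$ (either $H^{j-1}(\X)=0$, or else $f_jf_{j-1}=0$ by orthogonality), giving the exact $E_j$-module sequence
\[
0\rightarrow E_je\rightarrow X_0\rightarrow X_1\rightarrow 0.
\]
Lemma \mref{sub}(2)'s identifications $\Hom_A(-,H^j(\X))=\Hom_{E_j}(-,H^j(\X))$ and $\End_A(X_0)=\End_{E_j}(X_0)$ show that $E_je\rightarrow X_0$ remains a minimal left $\add H^j(\X)$-approximation over $E_j$. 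Summing over primitive $e$ with $j_e=j$ gives $0\rightarrow E_j\rightarrow M_0\rightarrow M_1\rightarrow 0$ satisfying the hypothesis of Corollary \mref{module}; since $\End_{E_j}(H^j(\X))\cong(E'_j)^{op}\cong\End_B(Bg_j)$ is hereditary by Lemma \mref{sub}(1), the corollary yields that $_{E_j}H^j(\X)_{E'_j}$ is tilting. The main technical point will be the descent of left minimality under $f_j(-)$; this is clean thanks to the identification $\End_A(X_0)=\End_{E_j}(X_0)$, under which "$gf=f$" over $A$ and over $E_j$ become literally the same equation.
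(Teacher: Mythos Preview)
Your proof is correct and follows the same overall architecture as the paper: the decompositions of $_AA$ and $B_B$ are built from the unique-degree condition, and the tilting assertion is obtained by pushing the approximation sequence through $\Hom_A(P_j,-)=f_j(-)$ and invoking Corollary~\mref{module}. There is, however, a genuine difference in how you establish the double centraliser property of $H^j(\X)$.

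The paper reuses the approximation machinery for the dcp as well: it takes the exact left $\add H^j(\X)$-approximation sequence $P_j\stackrel{f}{\rightarrow}X_0\rightarrow X_1$ from Proposition~\mref{dd}, applies $\Hom_A(P_j,-)$ to obtain $0\rightarrow E_j\rightarrow X_0\rightarrow X_1$ (using $\Hom_A(P_j,\ker f)=0$), checks this is still a left approximation over $E_j$ via Lemma~\mref{sub}(2), and then invokes Proposition~\mref{AS}. It then separately identifies $\End_{E_j}(H^j(\X))\cong(E'_j)^{op}$ through the chain $\End_A(H^j(\X))\cong\End_{D^b(A)}(\X\OT_B(P'_j)^*)\cong(E'_j)^{op}$. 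You instead bypass Proposition~\mref{AS} entirely and show directly that $r_{H^j(\X)}$ and $l_{H^j(\X)}$ are isomorphisms by cutting $r_{\X}$ (resp.\ $l_{\X}$) by the idempotent $g_j$ (resp.\ $f_j$) and transporting along $\X\OT_BBg_j\cong H^j(\X)[-j]$. Your route is shorter and tracks the bimodule structure more transparently; the paper's route has the virtue of treating the dcp and tilting cases uniformly via approximation sequences. Both are valid.

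One small remark on your minimality descent: your justification via $\End_A(X_0)=\End_{E_j}(X_0)$ is correct but terse. The underlying reason the equations $gf=f$ over $A$ and $g\tilde f=\tilde f$ over $E_j$ coincide is that $\Hom_A(Ae,X_0)\cong eX_0\cong\Hom_{E_j}(E_je,X_0)$ identifies $f$ with $\tilde f$ (since $(1-f_j)X_0=0$ forces $f$ to be determined by its restriction to $f_jAe$), and postcomposition by $g$ is the same on both sides.
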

\begin{proof}
Let $\X$ be a bounded $A$-$B$-complex with \ddcp. Assume that $\X$ has non-zero homology exactly at degree in $I\subseteq \mathbb{Z}$. Using Proposition \mref{dd}, for any indecomposable projective $A$-module $Ae$,
there is a unique $i\in \mathbb{Z}$ such that $\Hom_{D^b(A)}(Ae[-i], \X)\neq 0$. Then there is a decomposition $P_i\oplus Q$ of $_AA$ such that for any $V\in \add P_i$, $\Hom_{D^b(A)}(V[-i], \X)\neq 0$, and for any $V\in \add Q$, $\Hom_{D^b(A)}(V[-i], \X)=0$. With such process, there is a decomposition $\oplus_{i\in I} P_i$ of $_AA$ such that $\Hom_{D^b(A)}(P_i[-j], \X)\neq 0$ if and only if $i=j$.
Let $E_i=\End_A(P_i)^{op}$. Then $H^i(\X)$ is endowed with natural $E_i$-module structure by Lemma \mref{sub}$(2)$. Dually, there is a decomposition $\oplus_{i\in I} P'_i$ of $B_B$ such that $\Hom_{D^b(B^{op})}(P'_i[-j], \X)\neq 0$ if and only if $i=j$;
and $H^i(\X)$ is endowed with natural $(E'_i)^{op}$-module structure where $E'_i=\End_{B^{op}}(P'_i)$. Hence $H^i(\X)$ is endowed with $E_i$-$E'_i$-bimodule structure which is induced from the $A$-$B$-bimdoule structure of $H^i(\X)$. Note that in this case, $\X\OT_B(P'_i)^*\cong H^i(\X)[-i]$ in $D^b(A)$, where $(P'_i)^*$ means the $B$-module $\Hom_{B^{op}}(P'_i, B)$. Then by Lemma \mref{sub}$(2)$,  $\End_{E_i}(H^i(\X))\cong \End_A(H^{i}(\X))\cong \End_{D^b(A)}(\X\OT_B(P'_i)^*)\cong (E'_i)^{op}$.

Using Proposition \mref{dd} again, for each $i\in I$, there is an exact left $\add _AH^i(\X)$-approximation sequence $P_i\stackrel{f}\rightarrow X_0\rightarrow X_1$ of $P_i$ such that $\ker f\in  \add H^{i-1}(\X)$. Since $\Hom_A(P_i, H^{i-1}(\X))\cong\Hom_{D^b(A)}(P_i[i-1], \X)=0$, applying $\Hom_A(P_i, -)$ to the sequence, we have an exact sequence
$$0\rightarrow E_i\rightarrow X_0\rightarrow X_1$$
of $E_i$-modules. It is an exact left $\add _{E_i}H^i(\X)$-approximation sequence of $E_i$ by Lemma \mref{sub}$(2)$. By Proposition \mref{AS}, the bimodule $_{E_i}H^i(\X)$ has the double centraliser property. Moreover, since $\End_{E_i}(H^i(\X))\cong (E'_i)^{op}$, we have $H^i(\X)$ as $E_i$-$E'_i$-bimodule has the double centraliser property.

If $\X$ is two-sided tilting, by Proposition \mref{two-tilting}, for each $i\in I$, there is an exact left $\add H^i(\X)$-approximation sequence $P_i\stackrel{f}\rightarrow X_0\rightarrow X_1\rightarrow 0$ of $P_i$ such that $\ker f\in  \add H^{i-1}(\X)$. Similar with the discussion above, applying $\Hom_A(P_i, -)$ to the sequence, we have an exact sequence
$$0\rightarrow E_i\rightarrow X_0\rightarrow X_1\rightarrow 0$$
of $E_i$-modules, which is an exact left $\add _{E_i}H^i(\X)$-approximation sequence of $E_i$. By Lemma \mref{sub}$(1)$, $E'_i=\End_{B^{op}}(P'_i)$ is hereditary. Then by Corollary \mref{module}, the bimodule $_{E_i}H^i(\X)_{E'_i}$ is tilting.
\end{proof}

Let $A$ be an algebra. Let $\X$ be a bounded $A$-complex. Denote by $H(\X)$ the $A$-complex $\oplus_{i\in \mathbb{Z}} H^i(\X)[-i]$. $\X$ is called \textit{split} if $H(\X)\cong \X$ in $D^b(A)$. As we know, all bounded $A$-complexes are split if $A$ is hereditary.
\begin{prop}\mlabel{homolo}
Let $A$ and $B$ be hereditary algebras. Let $\X$ be a bounded $A$-$B$-complex. Then $\X$ has the derived double centraliser property if and only if so does $H(\X)$. In particular, if $\X$ is two-sided tilting, then $\X$ is split as $A\otimes_KB^{op}$-complex.
\end{prop}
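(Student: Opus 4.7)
The plan is to derive both claims from the characterizations in Propositions \mref{dd} and \mref{two-tilting}, exploiting that $A$ being hereditary forces every bounded $A$-complex to be split in $D^b(A)$, so in particular $\X\cong H(\X)$ in $D^b(A)$.

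For the equivalence that $\X$ has the derived double centraliser property iff $H(\X)$ does, the key observation is that conditions (1) and (2) of Proposition \mref{dd} are formulated purely in terms of the cohomology modules $H^{-i}(\X)$ and the Hom groups $\Hom_{D^b(A)}(Ae[i],-)$, and are thus preserved by the $D^b(A)$-isomorphism $\X\cong H(\X)$; similarly $\End_{D^b(A)}(\X)\cong\End_{D^b(A)}(H(\X))$, so the hereditariness hypothesis on the endomorphism algebra transfers between the two complexes. This already yields the equivalence at the level of $A$-complexes in the sense of Definition \mref{def}. To upgrade it to the natural $A$-$B$-bimodule structure, I would check directly that $r_{H(\X)}$ and $l_{H(\X)}$ are isomorphisms whenever $r_{\X}$ and $l_{\X}$ are. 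For $r_{H(\X)}$, Lemma \mref{act-here} gives $r_{H(\X)}(b)=0$ iff $H(\X)\OT_B Bb=0$ in $D^b(A)$; since $Bb$ is a projective $B$-module, this tensor product equals $\bigoplus_i (H^i(\X)b)[-i]$, which is the cohomology presentation of $\X\OT_B Bb$ in $D^b(A)$, so the vanishing is equivalent to $r_{\X}(b)=0$. A dimension count on the (isomorphic) endomorphism algebras concludes the argument; the statement for $l_{H(\X)}$ is symmetric via hereditariness of $B$.

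For the ``in particular'' clause, assume $\X$ is two-sided tilting. By Theorem \mref{t} combined with Lemma \mref{ddcp}(ii), the hypothesis of Proposition \mref{two-tilting} holds for $\X$, and by the same cohomology-and-Hom preservation it also holds for $H(\X)$; hence $H(\X)$ is isomorphic to a tilting complex in $D^b(A)$. Together with $r_{H(\X)}$ being an isomorphism, Proposition \mref{two-sided} promotes $H(\X)$ to a two-sided tilting $A$-$B$-complex. So $\X$ and $H(\X)$ are two-sided tilting $A$-$B$-complexes with $\X\cong H(\X)$ in $D^b(A)$, and by the $B$-symmetric analogue of Proposition \mref{RZ} there exists $\beta\in\Aut(B)$ such that $\X\cong H(\X)\OT_B B_\beta$ in $D^b(A\otimes_K B^{op})$. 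Since $H(\X)$ carries zero differentials, so does $H(\X)\OT_B B_\beta=\bigoplus_i H^i(\X)_{\beta}[-i]$; comparing cohomologies with $\X$ provides $A$-$B$-bimodule isomorphisms $H^i(\X)\cong H^i(\X)_{\beta}$ for each $i$, which assemble termwise into an isomorphism $H(\X)\cong H(\X)\OT_B B_\beta$ in $C(A\otimes_K B^{op})$. Composing yields $\X\cong H(\X)$ in $D^b(A\otimes_K B^{op})$, i.e., $\X$ is split as desired.

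The hardest step will be handling the twist $\beta$ produced by Proposition \mref{RZ}: the cohomology comparison is precisely what forces $\beta$ to act trivially up to bimodule isomorphism on each $H^i(\X)$, thereby absorbing the twist into a chain isomorphism and yielding splitness in the bimodule derived category.
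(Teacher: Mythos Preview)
Your proof is correct and follows the same core route as the paper: you show $r_{\X}$ is an isomorphism iff $r_{H(\X)}$ is via Lemma~\mref{act-here} (and dually for $l$), then promote $H(\X)$ to a two-sided tilting complex via Proposition~\mref{two-sided} and invoke the $B$-symmetric form of Proposition~\mref{RZ} to obtain $\X\cong H(\X)\OT_B B_\beta$ in $D^b(A\otimes_K B^{op})$. Your final cohomology comparison, yielding $H^i(\X)\cong H^i(\X)_\beta$ as bimodules and hence $\X\cong H(\X)$, just makes explicit what the paper leaves to the reader when it declares $\X\cong\bigoplus_i H^i(\X)_\sigma[-i]$ split.

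The only remark is that your detours through Propositions~\mref{dd} and~\mref{two-tilting} are unnecessary and can be removed: once you have the direct argument that $r_{\X}$, $l_{\X}$ are isomorphisms iff $r_{H(\X)}$, $l_{H(\X)}$ are, the first equivalence is already done (this is all the paper does); and for the second part, since $A$ is hereditary you have $H(\X)\cong\X$ in $D^b(A)$, so $H(\X)$ is trivially isomorphic to a tilting complex without passing through the approximation-sequence characterizations. (Also, a small slip: the symmetry for $l_{H(\X)}$ uses hereditariness of $A$, not $B$, to write $aA\cong eA$ for an idempotent $e$.)
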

\begin{proof}
Let $0\neq b\in B$. Since $B$ is hereditary, assume $Bb=Be$ for some idempotent $e\in B$. Note that
$\X\OT_BBb=\X\OT_BBe=0$ in $D^b(A)$ if and only if $\X\OT_BBe$ is acyclic if and only if $H(\X)\OT_BBb=H(\X)\OT_BBe$ is acyclic if and only if $H(\X)\OT_BBb=0$ in $D^b(A)$. Then by Lemma \mref{act-here}, $r_{\X}(b)\neq 0$ if and only if $r_{H(\X)}(b)\neq 0$. Note that $\End_{D^b(A)}(\X)\cong B$ if and only if $\End_{D^b(A)}(H(\X))\cong B$. So $r_{\X}$ is an isomorphism if and only if so is $r_{H(\X)}$. Dually, $l_{\X}$ is an isomorphism if and only if so is $l_{H(\X)}$. Hence $\X$ has the derived double centraliser property if and only if so does $H(\X)$.

If $\X$ is two-sided tilting, then $\X$ has \ddcp, so does $H(\X)$. Note that $\X$ is isomorphic to a tilting complex in $D^b(A)$, then so is $H(\X)$. By Proposition \mref{two-sided}, $H(\X)$ is two-sided tilting. Note that $\X\cong H(\X)$ in $D^b(A)$, by Proposition \mref{RZ}, $\X\cong H(\X)\OT_BB_{\sigma}$ in $D^b(A\otimes_KB^{op})$ for some $\sigma\in \Aut(B)$. Hence $\X\cong \oplus_{i\in\mathbb{Z}} H^i(\X)_{\sigma}[-i]$ in $D^b(A\otimes_KB^{op})$. So $\X$ is split as $A\otimes_KB^{op}$-complex.
\end{proof}

\begin{remark}
Let $A$ and $B$ be hereditary algebras. We do not know whether all bounded $A$-$B$-complexes with the derived double centraliser property are split as $A\otimes_KB^{op}$-complex.
\end{remark}

\section{Classifying complexes of bimodules over lower triangular matrices with \ddcp}\mlabel{triangular}

In this section, we will classify complexes of bimodules over lower triangular matrix algebras with \ddcp.

Let $\Lambda_n$ be the algebra of $n\times n$ lower triangular matrices over $K$ with $n\in\mathbb{N}$. Then $\Lambda_n$ is isomorphic to the path algebra $KQ$ of quiver
$$Q: {\bullet}_1\rightarrow {\bullet}_2\rightarrow \cdots \rightarrow{\bullet}_n.$$
Let $P(i), I(i)$ and $S(i)$ be the indecomposable projective $\Lambda_n$-module, the indecomposable injective $\Lambda_n$-module and the simple $\Lambda_n$-module corresponding to the vertex $i$ of $Q$, respectively. Since $\Lambda_n$ is a Nakayama algebra, let $X(i, j)$ be the indecomposable $\Lambda_n$-module with top $S(i)$ and socle $S(j)$.

{\bf A class of $\Lambda_n$-bimodules.} Let $A=\Lambda_n$. Consider the $A$-module
$$V_m=P(m)\oplus P(m-1)\oplus \cdots \oplus P(1)\oplus I(n-1)\oplus\cdots\oplus I(m)$$
for some $1\leq m\leq n$, where $V_1={_ADA}$ and $V_n={_AA}$. Then we can verify $\End_A(V_m)^{op}\cong A$. We still denote by $V_m$ the $A$-bimodule induced by the $A$-module $V_m$. It is easy to check that the exact sequence
$$A\hookrightarrow P(m)^{n-m+1}\oplus P(m-1)\oplus \cdots \oplus P(1)\rightarrow I(n-1)\oplus \cdots\oplus I(m)$$
 is a left $\add V_m$-approximation of $A$. By Proposition \mref{AS}, $A$-bimodule $V_m$ has the double centraliser property.

{\bf A class of $\Lambda_n$-$\Lambda_n$-complexes.} Let $A=\Lambda_n$. Let $e_i\in A$ be an idempotent of $A$ such that $Ae_i\cong P(1)\oplus P(2)\oplus\cdots \oplus P(i)$ as $A$-modules, for $1\leq i\leq n-1$. Consider the $A$-$A$-complex
$$T_i=A/Ae_{n-i}A\oplus D(A/Ae_iA)[1],$$
where $D=\Hom_K(-, K)$ and $e_{n-i}=1-e_i$.

\begin{lemma}\mlabel{T_i}
The $A$-$A$-complex $T_i$, for $1\leq i\leq n-1$, is two-sided tilting.
\end{lemma}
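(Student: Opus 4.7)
The plan is to verify the hypotheses of Theorem \ref{t} for the bimodule complex $T_i$, with $A = B = \Lambda_n$ hereditary.

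First I would decompose $T_i$ as a complex of left $A$-modules. Using the identifications $A/Ae_{n-i}A \cong \Lambda_i$ (the subpath algebra on vertices $\{1,\ldots,i\}$) and $A/Ae_iA \cong \Lambda_{n-i}$ (on vertices $\{i+1,\ldots,n\}$), together with the projective/injective decompositions of $\Lambda_i$ and $\Lambda_{n-i}$, one obtains
\[
H^0(T_i) \cong \bigoplus_{p=1}^{i} X(p,i), \qquad H^{-1}(T_i) \cong \bigoplus_{q=1}^{n-i} X(i+1, i+q)
\]
as left $A$-modules, giving $T_i$ a decomposition into $n$ indecomposable summands in $D^b(A)$.

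Next, using the formulas $\dim_K \Hom_A(X(a,b), X(c,d)) = 1$ iff $c \le a \le d \le b$ and $\dim_K \Ext^1_A(X(a,b), X(c,d)) = 1$ iff $a < c \le b+1 \le d$ (and both zero otherwise), I would compute all Hom-spaces between these summands in $D^b(A)$. Arranging the summands in the order
\[
X(i,i),\ X(i-1,i),\ \ldots,\ X(1,i),\ X(i+1,n)[1],\ \ldots,\ X(i+1,i+1)[1],
\]
one checks that the Hom-space from any earlier summand to any later summand is exactly $K$, and in the reverse direction zero, giving $\dim_K \End_{D^b(A)}(T_i) = \binom{n+1}{2} = \dim_K A$. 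Since all longer Homs are forced to be compositions of consecutive ones (each one-dimensional and nonzero), the Gabriel quiver of $\End_{D^b(A)}(T_i)^{op}$ is the linear $A_n$ quiver, so $\End_{D^b(A)}(T_i)^{op} \cong A$ as algebras. By Lemma \ref{bi}, this implies $r_{T_i}\colon A \to \End_{D^b(A)}(T_i)^{op}$ is an isomorphism.

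Condition (1) of Theorem \ref{t} follows from the same Hom-computation: $\Hom_{D^b(A)}(P(k)[j], T_i)$ is nonzero precisely when $j = 0$ and $1 \le k \le i$, or $j = 1$ and $i+1 \le k \le n$. For condition (2), I would use the short exact sequences of $A$-modules
\begin{align*}
0 \to P(i+1) \to P(k) \to X(k,i) \to 0 & \quad \text{for $1 \le k \le i$,} \\
0 \to P(k) \to P(i+1) \to X(i+1,k-1) \to 0 & \quad \text{for $i+1 \le k \le n$,}
\end{align*}
which yield distinguished triangles $P(k)[j_k] \to \X_0 \to \X_1 \to P(k)[j_k+1]$ in $D^b(A)$ with $\X_0, \X_1 \in \add T_i$ (the second summands being shifted by $[1]$); the left-approximation property follows automatically from condition (1), and minimality is immediate since each summand has local endomorphism ring. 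Applying Theorem \ref{t} then concludes that $T_i$ is two-sided tilting.

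The main obstacle is the Hom computation verifying $\End_{D^b(A)}(T_i)^{op} \cong A$: the bookkeeping of the $n^2$ Hom-spaces and the identification of the Gabriel quiver via composition relations is the heart of the argument, and once established, Lemma \ref{bi} together with Theorem \ref{t} do the rest mechanically.
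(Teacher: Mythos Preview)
Your overall strategy matches the paper's: decompose $T_i$ into indecomposables in $D^b(A)$, verify $r_{T_i}$ is an isomorphism, then check the two conditions of Theorem~\ref{t} (the paper actually invokes the hereditary version, Proposition~\ref{two-tilting}, but this is cosmetic). Your explicit short exact sequences for the individual $P(k)$ are exactly the summands of the paper's approximation sequences for $Ae_i$ and $Ae_{n-i}$.

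There is, however, a genuine gap in your deduction that $r_{T_i}$ is an isomorphism. Lemma~\ref{bi} does \emph{not} say that an abstract algebra isomorphism $\End_{D^b(A)}(T_i)^{op}\cong A$ forces $r_{T_i}$ to be an isomorphism; it requires an isomorphism of $B$-modules (or $B^{op}$-modules), where the module structure on $\End_{D^b(A)}(T_i)$ is the one induced by $r_{T_i}$ itself. Your Gabriel-quiver computation only gives an algebra isomorphism, so Lemma~\ref{bi} is not applicable. What you are missing is injectivity of $r_{T_i}$, and the paper obtains this differently: for every primitive idempotent $e\in A$ one has $T_i\otimes_A Ae\cong (A/Ae_{n-i}A)e\oplus D(A/Ae_iA)e[1]\neq 0$ in $D^b(A)$, hence $T_i\otimes_A Aa\neq 0$ for every $0\neq a\in A$ (as $Aa$ is projective), and then Lemma~\ref{act-here} gives $r_{T_i}(a)\neq 0$. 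Combined with your dimension count this yields the isomorphism. This is a one-line fix, but as written your appeal to Lemma~\ref{bi} is incorrect.

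A smaller point: the assertion that ``the left-approximation property follows automatically from condition~(1)'' is fine for the first family of triangles, but for the second family $P(k)[1]\to P(i+1)[1]\to X(i+1,k-1)[1]$ the surjectivity of $\Hom_{D^b(A)}(P(i+1)[1],T_i)\to\Hom_{D^b(A)}(P(k)[1],T_i)$ also needs $\Hom_{D^b(A)}(X(i+1,k-1),T_i)=0$, which is not a statement about projectives. It is true (an easy check with your Hom/Ext formulas), but it does not literally follow from condition~(1).
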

\begin{proof}
Note that $A/Ae_{n-i}A\cong X(i, i)\oplus X(i-1, i)\oplus \cdots \oplus X(1, i)$ and $D(A/Ae_iA)\cong X(i+1, n)\oplus X(i+1, n-1)\oplus\cdots\oplus X(i+1, i+1)$ as $A$-modules. Then $\End_{D^b(A)}(\X)^{op}\cong A$. For any idempotent $0\neq e\in A$, $T_i\OT_AAe\cong (A/Ae_{n-i}A)e\oplus D(A/Ae_iA)e[1]\neq 0$ in $D^b(A)$, hence for any $0\neq a\in A$, $T_i\OT_AAa\neq 0$. By Lemma \mref{act-here}, $r_{\X}: A\rightarrow\End_{D^b(A)}(\X)^{op}$ is an isomorphism. Similarly, for any indecomposable projective $A$-module $Ae$, either $\Hom_{D^b(A)}(Ae, T_i)\neq 0$ or $\Hom_{D^b(A)}(Ae[1], T_i)\neq 0$. We point out that $Ae_{n-i}\cong A/Ae_iA$ as $A$-modules and $A/Ae_iA\cong \Lambda_{n-i}$ as algebras. Let $A/Ae_iA\rightarrow I_0\rightarrow I_1\rightarrow 0$ be a minimal injective resolution of $A/Ae_iA$-module $A/Ae_iA$. Then the induced sequence $0\rightarrow Ae_{n-i}\rightarrow I_0\rightarrow I_1\rightarrow 0$ gives an exact minimal left $\add D(A/Ae_iA)$-approximation of $Ae_{n-i}$.  A minimal left $\add A/Ae_{n-i}A$-approximation of $Ae_i$ is given by the exact sequence:
$$0\rightarrow  X(i+1, n)^i\rightarrow Ae_i\rightarrow A/Ae_{n-i}A\rightarrow 0.$$
By Proposition \mref{two-tilting}, $T_i$ is two-sided tilting.
\end{proof}

Let $A$ be an algebra. Recall from \cite{AR} that a \textit{path} from an indecomposable module $M$ to an indecomposable module $N$ in $A$-mod is a sequence of morphisms
$$M\stackrel{f_1}\rightarrow M_1\stackrel{f_2}\rightarrow M_2\rightarrow \cdots \rightarrow M_{t-1}\stackrel{f_t}\rightarrow N$$
between indecomposable modules, where $t\geq 1$ and $f_i$ is not zero and not an isomorphism. In this case, we call the \textit{length} of the path is $t$, and say the path is a \textit{zero-path }if $f_t\cdots f_2f_1=0$.

\begin{lemma}\mlabel{path}
Let $A=\Lambda_n$. Then all the paths of length $t$ in $A$-mod are zero-paths if $t\geq n$. Hence if there is a $\Lambda_n$-$\Lambda_m$-bimodule with the double centraliser property, then $m=n$.
\end{lemma}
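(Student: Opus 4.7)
The strategy is to analyse the $\Hom$-spaces between indecomposable $\Lambda_n$-modules to obtain a uniform bound on the length of nonzero paths, and then apply this bound to endomorphism algebras via the Loewy length.

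Recall that $\Lambda_n$ is the path algebra of $1\to 2\to\cdots\to n$, whose indecomposable modules are the interval modules $X(i,j)$ with $1\leq i\leq j\leq n$, having $K$ at each vertex in $[i,j]$ and identity transition maps. A direct inspection of commutativity on the arrows yields
\[
\dim_K\Hom_{\Lambda_n}(X(i,j),X(i',j'))=\begin{cases}1 & \text{if } i'\leq i\leq j'\leq j,\\ 0 & \text{otherwise,}\end{cases}
\]
and any nonzero morphism is a scalar multiple of the canonical ``quotient-then-embed'' map, whose value at each vertex of the overlap $[i,j]\cap[i',j']$ is the identity. Multiplying two such canonical morphisms vertexwise then shows that a composition $X(i_0,j_0)\to X(i_1,j_1)\to X(i_2,j_2)$ of nonzero morphisms is itself nonzero precisely when $i_0\leq j_2$, in which case it equals the canonical morphism $X(i_0,j_0)\to X(i_2,j_2)$.

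Now consider a nonzero path $X(i_0,j_0)\to\cdots\to X(i_t,j_t)$ of length $t$ in which every morphism is a non-isomorphism. Iterating the criterion above yields $i_0\leq j_t$; moreover both sequences $(i_k)$ and $(j_k)$ are weakly decreasing, and non-isomorphism at each step forces a strict decrease in at least one of the two coordinates. Counting strict decreases,
\[
t\;\leq\;(i_0-i_t)+(j_0-j_t)\;\leq\;(i_0-1)+(j_0-i_0)\;=\;j_0-1\;\leq\;n-1,
\]
so $t\geq n$ forces the composition to vanish, which proves the first assertion.

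For the bimodule statement, suppose $M$ is a $\Lambda_n$-$\Lambda_m$-bimodule with the double centraliser property. Then $\End_{\Lambda_n}(M)\cong\Lambda_m^{op}$, which, being the path algebra of a linear quiver on $m$ vertices, has Loewy length exactly $m$. Decomposing $M=\bigoplus_k M_k^{c_k}$ into pairwise non-isomorphic indecomposables, the radical of $\End_{\Lambda_n}(M)$ is spanned by morphisms $M_i\to M_j$ that fail to be isomorphisms (automatic when $i\neq j$; when $i=j$, those lying in the local radical of $\End(M_i)$). Hence elements of $\rad^n\End_{\Lambda_n}(M)$ are sums of $n$-fold compositions of non-iso morphisms between indecomposable $\Lambda_n$-modules, and vanish by the first part. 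Thus $\End_{\Lambda_n}(M)$ has Loewy length at most $n$, giving $m\leq n$. The symmetric argument, using the isomorphism $l_M\colon\Lambda_n\cong\End_{\Lambda_m^{op}}(M)$ together with the first assertion applied to $\Lambda_m$, yields $n\leq m$; hence $m=n$. The main technical point is the composition criterion $i_0\leq j_t$, which tightens the length bound from the naive $(i_0-1)+(n-1)$ down to $j_0-1\leq n-1$.
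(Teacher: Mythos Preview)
Your proof is correct and follows essentially the same route as the paper: both arguments track the top and socle indices of the interval modules along a path and use that each non-isomorphism forces a strict drop in at least one coordinate, yielding $t\leq n-1$; for the bimodule statement, both pass through the radical of $\End_{\Lambda_n}(M)$ to relate paths in $\Lambda_n$-mod to the Loewy length of $\Lambda_m$. Your presentation is somewhat more explicit in two places---you write down the Hom criterion and composition rule for interval modules directly, and you spell out why $\rad^n\End_{\Lambda_n}(M)=0$ via the matrix description of the radical---whereas the paper argues via images of successive compositions and simply asserts the existence of a nonzero path of length $m-1$ from $\End_{\Lambda_n}(X)\cong\Lambda_m$; but these are cosmetic differences, not a genuinely different strategy.
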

\begin{proof}
Let $M\stackrel{f_1}\rightarrow M_1\stackrel{f_2}\rightarrow M_2\stackrel{f_3}\rightarrow \cdots \stackrel{f_{t-1}}\rightarrow M_{t-1}\stackrel{f_t}\rightarrow N$ be a path in $A$-mod with length $t$. Then it induces the sequence of morphisms:
$M\stackrel{g_1}\twoheadrightarrow \im f_1\stackrel{g_2}\twoheadrightarrow \im (f_2f_1)\stackrel{g_3}\twoheadrightarrow\cdots \stackrel{g_t}\twoheadrightarrow \im (f_t\cdots f_2f_1).$
 Let $M=X(i, j)$ and $N=X(i', j')$. Suppose that the path is nonzero and $t\geq n$. Then $\im (f_t\cdots f_2f_1)\neq 0$, hence $j\leq j'\leq i$. Note that $g_k$ is an isomorphism if only if $f_k$ is a proper monomorphism. Then there are at least $t-(j'-j)$ morphisms in the path which are proper monomorphisms. Therefore,
$i'\geq i+t-(j'-j)=t+j+(i-j')\geq t+j\geq n+j\geq n+1.$
This is a contradiction.

Let $X$ be a $\Lambda_n$-$\Lambda_m$-bimodule with the double centraliser property. Then $\End_{\Lambda_n}(X)\cong \Lambda_m$. Hence there is a non-zero path of length $m-1$ in $\Lambda_n$-mod. Then $m\leq n$. Dually, $n\leq m$. So $m=n$.
\end{proof}

\begin{lemma}\mlabel{number}
Let $A=\Lambda_n$. If a bounded $A$-$A$-complex $\X$ has \ddcp, then $\X$ has at most two non-zero homologies.
\end{lemma}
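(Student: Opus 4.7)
The plan is to exploit that $A = \Lambda_n$ is hereditary to decompose $\X$ as a direct sum of shifted indecomposable homologies, and then extract the bound by analyzing the nonvanishing pattern in $R := \End_{D^b(A)}(\X) \cong \Lambda_n^{op}$. Since $A$ is hereditary, $\X \cong \bigoplus_j H^j(\X)[-j]$ in $D^b(A)$. Write $M_j := H^j(\X)$, let $I = \{j : M_j \neq 0\}$, and decompose each $M_j \cong \bigoplus_{a=1}^{n_j} M_{j,a}$ into indecomposable $A$-modules, so that $\X \cong \bigoplus_{j \in I,\, a} M_{j,a}[-j]$ is a Krull--Schmidt decomposition of $\X$ in $D^b(A)$. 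By Lemma \mref{direct}, since $r_\X$ is an isomorphism and $B = \Lambda_n$ has exactly $n$ primitive orthogonal idempotents, the total count of indecomposable summands is $\sum_{j \in I} n_j = n$.

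Next, I pass to the endomorphism ring. Let $e_{j,a} \in R$ be the primitive idempotent associated to the summand $M_{j,a}[-j]$. Since $r_\X$ is an isomorphism, $R \cong B^{op} = \Lambda_n^{op}$, and the standard primitive idempotents $E_{11}, \ldots, E_{nn}$ of $\Lambda_n^{op}$ satisfy $E_{ii}\, \Lambda_n^{op}\, E_{jj} \neq 0$ if and only if $i \leq j$. As $R$ is basic, any two complete sets of primitive orthogonal idempotents are conjugate by a unit, and the nonvanishing pattern of the spaces $f\, R\, g$ is invariant under such conjugation. Hence we may relabel the $e_{j,a}$'s as $f_1, \ldots, f_n$ so that $f_i R f_j \neq 0$ if and only if $i \leq j$. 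Let $d(k) \in I$ denote the homological degree of the summand of $\X$ corresponding to $f_k$, and let $N_k$ be the underlying indecomposable $A$-module.

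The hereditary hypothesis supplies the final constraint. For any $i, j$,
$$f_i R f_j \;=\; \Hom_{D^b(A)}\!\bigl(N_j[-d(j)],\, N_i[-d(i)]\bigr),$$
which equals $\Hom_A(N_j, N_i)$ if $d(i) = d(j)$, equals $\Ext^1_A(N_j, N_i)$ if $d(i) = d(j) + 1$, and vanishes otherwise. So $f_i R f_j \neq 0$ forces $d(i) - d(j) \in \{0, 1\}$. Since this must hold for every pair $i \leq j$, the function $d$ is weakly decreasing along $1, \ldots, n$, and taking $(i, j) = (1, n)$ yields $d(1) - d(n) \leq 1$. Therefore every $d(k)$ lies in $\{d(n), d(n) + 1\}$, so $|I| \leq 2$. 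The one subtle step is the relabeling, which rests on the conjugation invariance of the nonvanishing pattern together with the fact that $R$ is basic; everything else is direct.
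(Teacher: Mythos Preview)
Your argument is correct in spirit and reaches the right conclusion, but there is a sign slip in the $\Ext$ computation: $\Hom_{D^b(A)}(N_j[-d(j)], N_i[-d(i)]) \cong \Ext_A^{\,d(j)-d(i)}(N_j, N_i)$, so the $\Ext^1$ case occurs when $d(j) = d(i) + 1$, not when $d(i) = d(j) + 1$. Consequently $d$ is weakly \emph{increasing} and $d(n) - d(1) \leq 1$; the final conclusion $|I| \leq 2$ is unaffected.

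The underlying idea is the same as the paper's, just with more scaffolding than needed. The paper argues directly: since $\End_{D^b(A)}(\X) \cong \Lambda_n^{op}$, any two nonzero orthogonal direct summands of $\X$ admit a nonzero morphism in at least one direction. If $\X$ had nonzero homologies in three degrees $i<j<k$, then taking summands $H^i(\X)[-i]$ and $H^k(\X)[-k]$ gives $\Hom$ equal to zero in both directions (as $k - i \geq 2$ and $A$ is hereditary), a contradiction. Your appeal to Lemma~\ref{direct}, the relabeling of all $n$ idempotents, and the monotonicity of $d$ are not needed; the single pairwise observation already suffices.
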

\begin{proof}
Suppose $A$-$A$-complex $\X$ has \ddcp. By definition, $A\cong \End_{D^b(A)}(\X)^{op}$. This means for each two direct summands $\X_0$ and $\X_1$ of $\X$ in $D^b(A)$, either $\Hom_{D^b(A)}(\X_0, \X_1)\neq 0$ or $\Hom_{D^b(A)}(\X_1, \X_0)\neq 0$. Suppose there exist at least three nonzero homologies of $\X$, say in degrees $i, j$ and $k$. We may assume $i<j<k$. Then
$$\Hom_{D^b(A)}(H^i(\X)[-i], H^k(\X)[-k])=0=\Hom_{D^b(A)}(H^k(\X)[-k], H^i(\X)[-i]).$$
This is a contradiction.
\end{proof}

\begin{lemma}\cite[Theorem A]{J}\mlabel{Aut}
For any $n\geq 1$, every automorphism of algebra $\Lambda_n$ is inner.
\end{lemma}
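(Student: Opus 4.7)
The plan is to exploit the very rigid combinatorial structure of $\Lambda_n = KQ$ (with $Q$ the linearly oriented $A_n$ quiver) by reducing an arbitrary automorphism to one that fixes the vertices and arrows up to scalars, and then absorbing those scalars into a conjugation by a diagonal unit.

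First I would recall that for a basic finite dimensional $K$-algebra $A$, any two complete sets of primitive orthogonal idempotents are conjugate by an inner automorphism (a standard lifting argument using that idempotents modulo $\rad A$ lift uniquely up to conjugation). Applying this to $\sigma \in \Aut(\Lambda_n)$ and the standard idempotents $\{e_1,\dots,e_n\}$, composing $\sigma$ with a suitable inner automorphism lets me assume that $\sigma$ permutes $\{e_1,\dots,e_n\}$, say $\sigma(e_i) = e_{\pi(i)}$ for some permutation $\pi$. Such a $\pi$ must induce a quiver automorphism of $Q$; but the linear orientation $\bullet_1\to\bullet_2\to\cdots\to\bullet_n$ has a unique source and a unique sink and a total ordering by distance from the source, so $\mrm{Aut}(Q) = \{\id\}$ and $\pi = \id$. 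Hence, after modification by an inner automorphism, $\sigma(e_i) = e_i$ for every $i$.

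Next I would analyze how $\sigma$ acts on the arrows $\alpha_i : i \to i+1$. Since $\sigma$ preserves each $e_i$ and is an algebra map, $\sigma(\alpha_i) = \sigma(e_{i+1}\alpha_i e_i) = e_{i+1}\sigma(\alpha_i) e_i$, so $\sigma(\alpha_i)$ lies in the one-dimensional space $e_{i+1}\Lambda_n e_i = K\alpha_i$. Therefore $\sigma(\alpha_i) = c_i \alpha_i$ for some scalars $c_i \in K^\times$ (nonzero because $\sigma$ is bijective). Because the $e_i$ together with the $\alpha_i$ generate $\Lambda_n$ as an algebra, $\sigma$ is completely determined by this data.

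Finally I would produce an inner automorphism that cancels the scalars $c_i$. Let $z = \sum_{i=1}^n z_i e_i \in \Lambda_n^\times$ be a diagonal unit; conjugation gives $z \alpha_i z^{-1} = z_{i+1} z_i^{-1} \alpha_i$. Setting $z_1 = 1$ and inductively $z_{i+1} = c_i z_i$ yields $z_{i+1} z_i^{-1} = c_i$ for all $i$, so conjugation by $z^{-1}$ precisely undoes $\sigma$ on each arrow while still fixing each $e_i$. Composing $\sigma$ with this inner automorphism gives the identity, so the original $\sigma$ was inner to begin with. The only real subtlety is the first reduction step (conjugating a permutation of idempotents to the identity), and that only works because $Q$ has trivial quiver automorphism group; the scalar-clearing step at the end is purely formal once the idempotents are fixed.
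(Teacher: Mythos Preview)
Your argument is correct. The paper, however, does not give a proof of this lemma at all: it simply quotes it as \cite[Theorem A]{J} (J\o ndrup's result on automorphisms of triangular matrix rings) and moves on. So there is no ``paper's own proof'' to compare against; you have supplied a self-contained argument where the author was content to cite the literature.

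Your approach is the standard one for path algebras: reduce to an automorphism permuting the primitive idempotents by conjugacy of complete idempotent systems, observe that the permutation must be a quiver automorphism and hence trivial for linearly oriented $A_n$, and then absorb the resulting arrow scalars into a diagonal conjugation. Each step is sound. The only place worth a word of care is the first reduction: the fact you invoke is that in a semiperfect (in particular basic finite-dimensional) algebra any two complete systems of primitive orthogonal idempotents are conjugate by a unit, which is classical (e.g.\ via lifting idempotents through $\rad A$ and Krull--Schmidt). J\o ndrup's original proof works directly with matrix units in the triangular matrix ring and is valid over more general base rings, whereas your argument uses the quiver presentation and the field $K$; for the purposes of this paper (everything over a field), your route is arguably cleaner and more conceptual.
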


\begin{lemma}\mlabel{one-homo}
Let $A=\Lambda_n$. Then an $A$-bimodule $X$ has \dcp if and only if $X\cong V_m$ as $A$-bimodules for some $1\leq m\leq n$.
\end{lemma}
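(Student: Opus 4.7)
The sufficiency is immediate from the construction of $V_m$ discussed just before the lemma together with Proposition \mref{AS}. For the necessity, the plan is a three-step argument that exploits the rigidity of $\Lambda_n$: reduce to a one-sided module classification, upgrade the result to bimodules up to a twist, and then discard the twist.

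First, I would observe that since $r_X : A\to \End_A(X)^{op}$ is an isomorphism, the underlying left module ${_AX}$ satisfies the classical double centraliser property with $\End_A(X)^{op}\cong A = \Lambda_n$ (using $\Lambda_n\cong \Lambda_n^{op}$ via the vertex-reversing isomorphism). In particular ${_AX}$ is basic with exactly $n$ pairwise non-isomorphic indecomposable summands, and Proposition \mref{AS} supplies an exact sequence $0\to A\to X_0\to X_1$ in which $A\to X_0$ is a minimal left $\add X$-approximation and $X_0, X_1\in \add X$. I would then appeal to the classification of $\Lambda_n$-modules with \dcp obtained in \cite{CMRS}, filtered by the constraint $\End_A(X)^{op}\cong \Lambda_n$, to conclude that ${_AX}\cong V_m$ as left $A$-modules for some $1\leq m\leq n$.

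Next, since both $X$ and $V_m$ are $A$-bimodules with $r_X$ and $r_{V_m}$ isomorphisms (the latter because $V_m$ has \dcp), and since ${_AX}\cong {_AV_m}$, Lemma \mref{KY} produces an automorphism $\beta\in \Aut(A)$ with $X\cong (V_m)_{\beta}$ as $A$-bimodules. To eliminate the twist I would invoke Lemma \mref{Aut}: every automorphism of $\Lambda_n$ is inner, so $\beta(a)=z^{-1}az$ for some unit $z\in A$. A routine check shows that $v\mapsto vz$ is an $A$-bimodule isomorphism $V_m\to (V_m)_\beta$, and composing with the previous isomorphism gives $X\cong V_m$ as $A$-bimodules.

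The decisive step is the identification ${_AX}\cong V_m$ in the first stage. Beyond the approximation-sequence constraint of Proposition \mref{AS}, one has to rule out every other choice of $n$ pairwise non-isomorphic indecomposables $\{X(i_k,j_k)\}$ compatible with $\End_A(X)^{op}\cong \Lambda_n$. The paper \cite{CMRS} performs exactly this combinatorial classification for lower triangular matrix algebras; in a self-contained argument the task would reduce to tracking, through the injection $A\hookrightarrow X_0$, which indecomposable summand of $X$ supplies a copy of each projective $P(i)$, and then showing that the only coherent solution is the list defining $V_m$.
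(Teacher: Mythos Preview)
Your bimodule-upgrading step via Lemma~\mref{KY} and Lemma~\mref{Aut} is exactly what the paper does, and the sufficiency is handled the same way. The divergence is in the identification of ${_AX}$ as a left module: you outsource this to the combinatorial classification of \cite{CMRS} plus an unspecified filtering by the condition $\End_A(X)^{op}\cong\Lambda_n$, and you yourself flag this as the decisive, not-fully-justified step. The paper instead gives a short self-contained argument that avoids \cite{CMRS} entirely and is worth knowing. Since $l_X$ is injective, ${_AX}$ is faithful; over $\Lambda_n$ this forces $P(1)=X(1,n)\in\add X$, because the longest path annihilates every other indecomposable. Now the condition $\End_A(X)^{op}\cong\Lambda_n$ implies that every indecomposable summand $Y$ of ${_AX}$ satisfies $\Hom_A(P(1),Y)\neq 0$ or $\Hom_A(Y,P(1))\neq 0$; a direct computation with the intervals $X(i,j)$ shows this happens precisely when $Y$ is projective or injective, so ${_AX}\in\add(A\oplus DA)$. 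From there, picking $n$ pairwise non-isomorphic summands from $\{P(1),\dots,P(n),I(1),\dots,I(n-1)\}$ whose endomorphism algebra is $\Lambda_n$ forces the list $V_m$. This is both shorter and logically cleaner than routing through \cite{CMRS}; your approach is not wrong, but the filtering you need from \cite{CMRS} is itself roughly the same amount of work as the paper's direct argument, so you gain nothing by the detour.
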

\begin{proof}
Let $X$ be an $A$-bimodule with the double centraliser property. Note that $_AX$ is faithful if and only if $P(1)\in \add X$. Since $\End_A(X)^{op}\cong A$, any two indecomposable direct summands of $_AX$ has nonzero morphism. Note that $\Hom_A(P(1), Y)=0=\Hom_A(Y, P(1))$ for any indecomposable $A$-module $Y\notin\add A\oplus DA$, we have $_AX\in \add A\oplus DA$. Then it is not difficult to verify that $X\cong V_m$ as $A$-modules for some $1\leq m\leq n$.
Now, by Lemmas \mref{KY} and \mref{Aut}, we have $X\cong V_m$ as $A$-bimodules for some $1\leq m\leq n$.
\end{proof}

\begin{lemma}\mlabel{two-homo}
Let $A=\Lambda_n$. Then a bounded $A$-$A$-complex $\X$ with exactly two nonzero homologies has the derived double centraliser property if and only if $\X$, up to shift, is isomorphic to $T_i$ in $D^b(A^e)$ for some $1\leq i\leq n-1$.
\end{lemma}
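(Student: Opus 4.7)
The $(\Leftarrow)$ direction follows immediately by invoking Lemma~\ref{T_i} (so each $T_i$ is two-sided tilting, hence has ddcp by Remark~\ref{bimodule}) together with the shift-invariance of ddcp. For the harder $(\Rightarrow)$ direction, suppose $\X$ has ddcp with exactly two nonzero homologies. After shifting I may assume they lie in degrees $0$ and $-1$; write $M := H^0(\X)$ and $N := H^{-1}(\X)$. Since $A$ is hereditary, $\X \cong H(\X) = M \oplus N[1]$ in $D^b(A)$, and Proposition~\ref{homolo} tells me that $H(\X)$, equipped with the induced $A$-bimodule structures on $M$ and $N$, also has ddcp as an $A^e$-complex. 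I will work with $H(\X)$ from here on.

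The plan is to combine Propositions~\ref{H} and \ref{homolo} with Lemmas~\ref{path}, \ref{one-homo} and the triangular ring structure coming from ddcp. First I apply Proposition~\ref{H} to produce decompositions ${}_A A = P_0 \oplus P_1$ and $A_A = P'_0 \oplus P'_1$, so that $M$ has the (underived) double centraliser property as an $E_0$-$E'_0$-bimodule and $N$ as an $E_1$-$E'_1$-bimodule, with $E_j = \End_A(P_j)^{op}$ and $E'_j = \End_{A^{op}}(P'_j)$. Writing $P_j = \bigoplus_{l \in J_j} A\epsilon_l$ and $P'_j = \bigoplus_{l \in J'_j} \epsilon_l A$, Lemma~\ref{path} applied to $M$ forces $|J_0| = |J'_0| =: m$, so $|J_1| = |J'_1| = n - m$; any sub-idempotent of the primitive idempotents of $\Lambda_n$ yields an endomorphism algebra isomorphic to the corresponding $\Lambda_{(\cdot)}$, so $E_0, E'_0 \cong \Lambda_m$ and $E_1, E'_1 \cong \Lambda_{n-m}$.

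The crucial step will be to pin down the sets $J_j, J'_j$ as consecutive intervals and to identify the bimodule types of $M, N$. For this I will use the isomorphism $A \cong \End_{D^b(A)}(H(\X))^{op}$, realising $A$ as a triangular algebra with blocks $\End(M)$, $\End(N)$ and off-diagonal $\Ext^1(M, N)$ (the fourth block $\Hom(N[1], M) = \Ext^{-1}(N, M)$ vanishes by heredity). The two projection idempotents $a_M, a_N \in A$ satisfy $a_N A a_M = 0$; combined with the intrinsic order on primitive idempotents of $\Lambda_n$ ($\epsilon_i A \epsilon_j \neq 0$ iff $j \leq i$), this forces $J_0 = \{1, \ldots, m\}$, $J_1 = \{m+1, \ldots, n\}$; the parallel argument via $l_{H(\X)}$ yields $J'_0 = \{1, \ldots, m\}$, $J'_1 = \{m+1, \ldots, n\}$. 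Lemma~\ref{one-homo} then identifies $M \cong V_{k_0}$ over $\Lambda_m$ and $N \cong V_{k_1}$ over $\Lambda_{n-m}$, and matching the $\End(N)$-$\End(M)$-bimodule structure on $\Ext^1(M, N)$ with that of $e_{n-m} A e_m$ inside $\Lambda_n$ forces $k_0 = m$ and $k_1 = 1$, so $M \cong A/Ae_{n-m} A$ and $N \cong D(A/Ae_m A)$ as bimodules. I then conclude $H(\X) \cong T_m$ in $D^b(A^e)$; combining with $\X \cong H(\X)$ in $D^b(A)$ and Proposition~\ref{two-sided}, $\X$ is two-sided tilting, hence split as $A^e$-complex by Proposition~\ref{homolo}, giving $\X \cong T_m$ in $D^b(A^e)$ up to an automorphism twist (absorbed by Proposition~\ref{out}(2) together with Lemma~\ref{Aut}).

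The main obstacle will be the crucial step itself: translating the triangular ring structure of $A \cong \End(H(\X))^{op}$ first into the interval shape of the supports $J_j, J'_j$, and then into the specific $V_{k_j}$-bimodule types, forcing $M$ to be the regular bimodule of $\Lambda_m$ and $N$ its $K$-dual. This requires careful bookkeeping with the primitive idempotent structure of $\Lambda_n$ and the induced bimodule action on $\Ext^1(M, N)$.
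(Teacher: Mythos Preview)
Your proposal follows essentially the same strategy as the paper's proof: apply Proposition~\ref{H} together with Lemma~\ref{path} to obtain $E_0\cong E'_0\cong\Lambda_m$ and $E_1\cong E'_1\cong\Lambda_{n-m}$; invoke Lemma~\ref{one-homo} to identify $M$ and $N$ as bimodules of type $V_k$; use a constraint coming from $\End_{D^b(A)}(\X)^{op}\cong A$ to force the specific $V_k$-types; and finally lift from $D^b(A)$ to $D^b(A^e)$ via Proposition~\ref{two-sided} and the triviality of $\mathrm{Out}(\Lambda_n)$ (Lemma~\ref{Aut}).

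The differences are in execution rather than in substance. First, you route through $H(\X)$ via Proposition~\ref{homolo}, whereas the paper works with $\X$ directly throughout; this is a harmless detour. Second, and more importantly, for both the interval shape of $J_0,J_1$ \emph{and} the pinning down of $k_0,k_1$, the paper uses one clean device: since $\End_{D^b(A)}(\X)^{op}\cong\Lambda_n$, every pair of indecomposable summands of $\X$ is connected by a nonzero morphism in $D^b(A)$, which forces $\Ext^1_A(M',N')\neq 0$ for every indecomposable summand $M'$ of $M$ and $N'$ of $N$. The interval shape of the supports drops out of this, and then two one-line computations---$\Ext^1_A(X(1,k_0),N)=0$ whenever $k_0<m$, and the symmetric vanishing on the $N$-side---immediately give $k_0=m$ and $k_1=1$. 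Your idempotent/triangular argument for the intervals is an equivalent reformulation, but your proposed ``bimodule matching'' of $\Ext^1(M,N)$ against $e_{n-m}Ae_m$ is vaguer; when you carry out the bookkeeping you anticipate, it will reduce to exactly the paper's Ext-nonvanishing check, which is the more direct route. Third, your endgame invokes splitness from Proposition~\ref{homolo} and Proposition~\ref{out}, while the paper uses Proposition~\ref{RZ}; both reach the same conclusion.
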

\begin{proof}
Let $\X$ be an $A$-$A$-complex with the derived double centraliser property. We may assume $\X$ concentrated in degree $0$ and $1$. Let $M=H^0(\X)$ and $N=H^1(\X)$.  By Proposition \mref{H}, there is a decomposition $P_0\oplus P_1$ of $_AA$ with $E_i:= \End_A(P_i)^{op}$ and a decomposition  $P'_0\oplus P'_1$ of $A_A$ with $E'_i:= \End_{A^{op}}(P'_i)$, such that $M$ endowed with natural $E_0$-$E'_0$-bimodule and $N$ endowed with natural $E_1$-$E'_1$-bimodule have the double centraliser property. Note that $E_i$ or $E'_i$, for $i=0$ or $1$, is isomorphic to $\Lambda_k$ for some $k$. By Lemma \mref{path}, $E_0\cong E'_0\cong \Lambda_i$ and $E_1\cong E'_1\cong \Lambda_{n-i}$, for some $1\leq i\leq n-1$.    Note that $\End_{D^b(A)}(\X)^{op}\cong A$ forces, for any direct summand $M'$ of $_AM$ and any direct summand $N'$ of $_AN$, $\Ext^1_A(M', N')\neq 0$. Then we may assume $P_0\cong P(1)\oplus P(2)\oplus\cdots \oplus P(i)$ and $P_1\cong P(i+1)\oplus P(i+2)\oplus\cdots \oplus P(n)$ as $A$-modules. By Lemma \mref{one-homo},
$$M\cong X(m, i)\oplus X(m-1, i)\oplus \cdots \oplus X(1, i)\oplus X(1, i-1)\oplus\cdots\oplus X(1, m)$$
$$N\cong X(i+l, n)\oplus X(i+l-1, n)\oplus \cdots \oplus X(i+1, n)\oplus X(i+1, n-1)\oplus\cdots\oplus X(i+1, n-i-l)$$
as $A$-modules, for some $1\leq m\leq i$ and $1\leq l\leq n-i$. Note that $\Ext^1_A(X(1, m), N)=0$ if $m\neq i$ and $\Ext^1_A(X(i+l, n), M)=0$ if $l\neq 1$. Then $m=i$ and $l=1$. Actually, now $M\cong A/Ae_{n-i}A$ and $N\cong D(A/Ae_iA)$ as $A$-modules, so $\X\cong T_i$ in $D^b(A)$. By Lemma \mref{T_i} and Proposition \mref{two-sided}, $\X$ is two-sided tilting. By Proposition \mref{RZ} and Lemma \mref{Aut}, we have $\X\cong T_i$ in $D^b(A^e)$.
\end{proof}

Combining Lemmas \mref{number}, \mref{one-homo} and \mref{two-homo}, we have
\begin{theorem}\mlabel{A_n}
 Let $A=\Lambda_n$. Let $\X$ be a bounded $A$-$A$-complex. Then $\X$ has \ddcp\, if and only if, in $D^b(A^e)$, $\X$, up to shift, is isomorphic to one of the following cases:
\begin{enumerate}
\item
 The $A$-bimodules $V_1, V_2, \cdots, V_n$;
\item
The $A$-$A$-complexes $T_1, T_2,\cdots, T_{n-1}$.
\end{enumerate}
\end{theorem}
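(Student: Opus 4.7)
The plan is to combine the three preceding lemmas (Lemma \mref{number}, Lemma \mref{one-homo}, Lemma \mref{two-homo}) into the full classification, so the theorem reduces to assembling already-proved ingredients rather than carrying out fresh work.

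First I would dispose of the sufficiency. Each bimodule $V_m$ was already shown to have the double centraliser property at the point of its construction, by exhibiting the explicit left $\add V_m$-approximation of $A$ and invoking Proposition \mref{AS}; by Remark \mref{bimodule}(1) this upgrades to the derived double centraliser property. Each complex $T_i$ is two-sided tilting by Lemma \mref{T_i}, hence has the derived double centraliser property by Remark \mref{bimodule}(2). Shifts in $D^b(A^e)$ preserve the property, as is immediate from Definition \mref{def}.

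For necessity, suppose $\X$ has the derived double centraliser property. Since $r_{\X}\colon A\to \End_{D^b(A)}(\X)^{op}$ is an isomorphism and $A\neq 0$, the complex $\X$ is nonzero in $D^b(A)$ and hence carries at least one nonzero cohomology; by Lemma \mref{number} it has at most two. In the two-cohomology case, Lemma \mref{two-homo} directly yields $\X\cong T_i$ up to shift in $D^b(A^e)$ for some $1\leq i\leq n-1$. In the single-cohomology case, with $H^i(\X)=M$, canonical truncation of $\X$ as a complex of $A^e$-modules produces a quasi-isomorphism to $M[-i]$; after a shift one may assume $\X$ is a bimodule $M$ concentrated in degree zero, and by Lemma \mref{quasi} this bimodule complex still has the derived double centraliser property. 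Remark \mref{bimodule}(1) then shows $M$ has the classical double centraliser property as an $A$-bimodule, and Lemma \mref{one-homo} identifies $M\cong V_m$ for some $1\leq m\leq n$.

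No substantial obstacle is anticipated since the three lemmas do the heavy lifting. The only subtlety worth confirming is the single-homology reduction: one uses that a bounded complex of $A^e$-modules whose cohomology is concentrated in a single degree is isomorphic in the derived category to that cohomology, which is what lets the classification land as a statement about $A^e$-bimodules rather than only about $A$-modules, and which in turn allows Lemma \mref{one-homo} (a bimodule statement) to apply directly.
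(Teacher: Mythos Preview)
Your proposal is correct and matches the paper's approach exactly: the paper's proof consists of the single sentence ``Combining Lemmas \ref{number}, \ref{one-homo} and \ref{two-homo}, we have'' followed by the theorem statement. You have simply spelled out the assembly in more detail, including the straightforward sufficiency direction and the single-homology reduction via canonical truncation and Lemma \ref{quasi}, none of which presents any difficulty.
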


\begin{remark}\mlabel{con}
Given two algebras $A$ and $B$ which are derived equivalent, there is a derived equivalence $F: D^b(A^e)\rightarrow D^b(B^e)$ \cite{R}. Then $F$ may not preserve \ddcp, in other words, if an $A$-$A$-complex $\X$ has \ddcp, then $F(\X )$ may have not.

Let $A=\Lambda_3$. Let $B$ be the path algebra $KQ$ of quiver $Q: \bullet\rightarrow \bullet\leftarrow \bullet$. It is known that $A$ and $B$ are derived equivalent. Recall that for algebras $C$ and $D$, $\lambda(C, D)$ is the number of all bounded $C$-$D$-complexes, up to isomorphism and shift in $D^b(C\otimes_KD^{op})$, which have \ddcp. Note that $|\textmd{Out}(B)|=2$. By Proposition \mref{out}, $\lambda(B, B)$ is even. However, by Theorem \mref{A_n}, $\lambda(A, A)=5$.
\end{remark}

\vskip 0.1in
\noindent {\bf Acknowledgements}: This work was supervised by professor Alexander Zimmermann while the author was a visiting PhD student at the Universit\'{e} de Picardie. The author would like to thank him for many helpful discussions. He also wants to thank professor Bernhard Keller for some valuable comments. Moreover, he thanks the China Scholarship Council for financial support and the LAMFA for hospitality.
\bigskip

\end{document}